\documentclass[11pt,reqno]{amsart}
\usepackage[utf8]{inputenc}

\usepackage{amssymb, amsmath, amsthm}
\usepackage[bookmarks, bookmarksdepth=2, colorlinks=true, linkcolor=blue, citecolor=blue, urlcolor=blue]{hyperref}

\usepackage[alphabetic,lite]{amsrefs}
\usepackage{verbatim}
\usepackage{amscd}   
\usepackage[all]{xy} 
\usepackage{youngtab} 
\usepackage{young} 
\usepackage{ytableau}
\usepackage{tikz}
\usepackage{ mathrsfs }
\usepackage{cases}
\usepackage{array}
\usepackage{cellspace}
\usepackage{calligra,mathrsfs}
\usepackage{bm}
\usepackage{graphicx}
\usepackage{rank-2-roots}
\usepackage{float}
\usepackage{enumitem}

\usepackage[prependcaption,textsize=scriptsize]{todonotes}

\newcommand{\defi}[1]{{\upshape\sffamily #1}}

\DeclareMathOperator{\ShHom}{\mathscr{H}\text{\kern -3pt {\calligra\large om}}\,}

\def\kk{{\mathbf k}}
\renewcommand{\ll}{\lambda}

\newcommand{\oo}{\otimes}

\newcommand{\bG}{{\bf G}}
\newcommand{\bh}{{\bf h}}
\newcommand{\bN}{{\bf N}}
\newcommand{\bW}{{\bf W}}

\newcommand{\avg}{\operatorname{avg}}

\newcommand{\GL}{\operatorname{GL}}

\newcommand{\Sym}{\operatorname{Sym}}

\newcommand{\bb}[1]{\mathbb{#1}}

\newcommand{\mc}[1]{\mathcal{#1}}
\newcommand{\mf}[1]{\mathfrak{#1}}
\newcommand{\ol}[1]{\overline{#1}}
\newcommand{\op}[1]{\operatorname{#1}}

\newcommand{\ul}[1]{\underline{#1}}

\def\PP{{\mathbf P}}
\def\lra{\longrightarrow}

\newtheorem{theorem}{Theorem}[section]
\newtheorem*{theorem*}{Theorem}
\newtheorem*{problem*}{Problem}
\newtheorem{lemma}[theorem]{Lemma}

\newtheorem{proposition}[theorem]{Proposition}
\newtheorem{corollary}[theorem]{Corollary}
\newtheorem*{corollary*}{Corollary}

\newtheorem*{main-thm*}{Main Theorem}
\newtheorem*{linear-resolutions*}{Theorem on Linear Resolutions}
\newtheorem*{regularity-powers*}{Theorem on Regularity}
\newtheorem*{injectivity-Ext*}{Theorem on Injectivity of Maps of Ext Modules}
\newtheorem*{Kodaira*}{Kodaira Vanishing for Determinantal Thickenings}

\theoremstyle{definition}
\newtheorem{definition}[theorem]{Definition}
\newtheorem*{definition*}{Definition}
\newtheorem{example}[theorem]{Example}

\theoremstyle{remark}
\newtheorem{remark}[theorem]{Remark}
\newtheorem*{remark*}{Remark}

\numberwithin{equation}{section}

\begin{document}

\title{Lefschetz properties for monomial complete intersections}

\author{Annet Kyomuhangi}
\address{Department of Mathematics, Busitema University, P.O. Box 236, Tororo}
\email{annet.kyomuhangi@gmail.com}

\author{Emanuela Marangone}
\address{Dipartimento di Matematica, Università degli Studi di Genova, Via Dodecaneso, 35, 16146 Genova, Italy \newline
\indent CIMAT - Centro de Investigación en Matemáticas, Valenciana, 36023 Guanajuato, Mexico}
\email{emanuela.marangone@cimat.mx}

\author{Claudiu Raicu}
\address{Department of Mathematics, University of Notre Dame, 255 Hurley, Notre Dame, IN 46556\newline
\indent Institute of Mathematics ``Simion Stoilow'' of the Romanian Academy}
\email{craicu@nd.edu}

\author{Ethan Reed}
\address{Academy of Mathematics and Systems Science, Chinese Academy of Sciences, No. 55 Zhongguancun
East Road, Beijing, 100190, China.}
\email{ethan.reed@amss.ac.cn}

\subjclass[2020]{Primary 13C40, 13E10, 14M15, 20G05, 05E05}

\date{\today}

\keywords{Lefschetz properties, monomial complete
intersections, character formulas}

\begin{abstract} 
We give a complete characterization of the weak Lefschetz property (WLP) for monomial complete intersections over a field of positive characteristic. Richard Stanley observed that in characteristic zero WLP, and in fact the strong Lefschetz property (SLP), holds for all degree sequences, as a consequence of the hard Lefschetz theorem, and the same result was explained by Junzo Watanabe using the representation theory of~$\mf{sl}_2$. In positive characteristic, many partial results are known, most notably the classification for constant degree sequences due to Brenner--Kaid and Kustin--Vraciu. Our approach is based on a cohomological and representation-theoretic interpretation of WLP. Combined with an analysis of cohomology characters, this leads to a complete numerical criterion for WLP, expressed by simple inequalities involving the $p$-adic digits of the exponents. We also give a new proof of the known classification of SLP using Renaud’s algorithm for multiplication in the Green--Han--Monsky ring.
\end{abstract}

\maketitle

\section{Introduction}

Let $\kk$ be a field of characteristic $p$, and for a sequence $\ul{a}\in\bb{Z}^n_{\geq 0}$ consider the monomial complete intersection
\begin{equation}\label{eq:def-Ma} 
M_{\ul{a}} = \kk[T_1,\ldots,T_n]/\langle T_1^{1+a_1},\ldots,T_n^{1+a_n}\rangle.
\end{equation}
We say that $M_{\ul{a}}$ has the \defi{weak Lefschetz property (WLP)} if there exists a linear form $\ell$ such that for all $e\geq 0$ the multiplication by $\ell$, 
\begin{equation}\label{eq:timesT-maxrk}
\times \ell: (M_{\ul{a}})_e \lra (M_{\ul{a}})_{e+1},
\end{equation}
has maximal rank. $M_{\ul{a}}$ has the \defi{strong Lefschetz property (SLP)} if for some linear form $\ell$ and for all $d,e\geq 0$
\[\times \ell^d: (M_{\ul{a}})_e \lra (M_{\ul{a}})_{e+d}\]
has maximal rank. A well-known observation of Stanley is that in characteristic zero, $M_{\ul{a}}$ satisfies WLP and SLP \cites{stanley,watanabe}, which follows from the hard Lefschetz theorem applied to the product of projective spaces $\PP^{a_1}\times\cdots\times\PP^{a_n}$. In characteristic $p>0$ this is no longer true, and a characterization of the complete intersections $M_{\ul{a}}$ for which SLP holds was obtained in \cites{nick,lund-nick, Cook12}, but the corresponding classification for WLP has remained open until now. There are many partial results in the literature regarding WLP \cites{bre-kai,kus-vra,Vraciu,Cook12}, and the goal of our work is to provide a complete characterization of the sequences $\ul{a}$ for which $M_{\ul{a}}$ has WLP, illustrate it with several applications, and explain how it recovers some of the most important prior results. In the final section we also explain a different take on the classification of SLP.

To state our main theorem we need to establish some preliminary notation. For $\ul{m}=(m_1,\cdots,m_n)\in\bb{Z}^n_{\geq 0}$ we write $|\ul{m}|=m_1+\cdots+m_n$, and define the \defi{gap $g(\ul{m})$} and the \defi{nonnegative gap $g_+(\ul{m})$} via
\begin{equation}\label{eq:defgap-g+} 
g(\ul{m}) = 2\max(m_i) - |\ul{m}|\quad\text{and}\quad g_+(\ul{m}) = \begin{cases}
\max(g(\ul{m}),0) & \text{if }|\ul{m}|\text{ is even}, \\
\max(g(\ul{m}),1) & \text{if }|\ul{m}|\text{ is odd}. \\
\end{cases}
\end{equation}
A positive value for $g(\ul{m})$ measures the extent to which $\ul{m}$ fails to be balanced (or satisfy the generalized triangle inequality). In Corollary~\ref{cor:g+m=mina-b} we show that $g_+(\ul{m})$ measures the minimal distance $a-b$ between the parts of a $2$-row partition $(a,b)$ such that $\ul{m}$ appears as a weight in the Schur polynomial $s_{(a,b)}$.

\begin{definition}\label{def:reduction+gap}
Fix a sequence $\ul{a}=(a_1,\ldots,a_n)$ and a prime power $q=p^k$.
\begin{enumerate}
    \item We say that $\ul{r}=(r_1,\ldots,r_n)$ is a \defi{$q$-reduction} of $\ul{a}$ if 
    \[a_i\equiv r_i\ (\op{mod}\ q)\text{ and }0\leq r_i\leq\min(a_i,2q-1)\text{ for all }i=1,\cdots,n.\]
    \item For a $q$-reduction $\ul{r}$, we set $m_i = (a_i-r_i)/q$ and define its \defi{gap} and \defi{nonnegative gap}
    \[g(\ul{a};\ul{r}) = g(\ul{m})\quad\text{and}\quad g_+(\ul{a};\ul{r}) = g_+(\ul{m}).\]
    We say that the $q$-reduction $\ul{r}$ is \defi{balanced} if $g(\ul{a};\ul{r})\leq 0$. It is even if $|\ul{m}|$ is even, and it is odd otherwise.
\end{enumerate}
\end{definition}

\noindent Notice that for fixed $q$, the number of $q$-reductions of $\ul{a}$ is at most $2^n$. Moreover, if $q>\max(a_i)$ then there exists a unique $q$-reduction, namely $\ul{r} = \ul{a}$, and this reduction is balanced and even.

For $q=p^k$ and $a\in\bb{Z}$ we write $a = 2qm + r$ with $0\leq r<2q$, and define 
\begin{equation}\label{eq:def-theta-q}
 \theta_q(a) = \begin{cases}
     r & \text{if }r\leq q-1,\\
     2q-2-r & \text{if }q-1\leq r\leq 2q-2,\\
     -\infty & \text{if }r=2q-1.
 \end{cases}
\end{equation}
As noted in Section~\ref{subsec:tensor-truncated}, we can view $\theta_q(a)$ as the largest imbalance $|u-v|$ obtained when writing the residue $r$ as $u+v$, with $0\leq u,v\leq q-1$.

\begin{theorem}\label{thm:new-WLP}
    The monomial complete intersection $M_{\ul{a}}$ satisfies WLP if and only if for every $q=p^k$, and for every $q$-reduction $\ul{r}$ of $\ul{a}$ we have
    \begin{equation}\label{eq:thetaq-leq-gap}
    \sum_{i=1}^n\theta_q(r_i)\leq \left(g_+(\ul{a};\ul{r})+2\right)q-2.
    \end{equation}
    Moreover, it is enough to verify \eqref{eq:thetaq-leq-gap} for those $q$ that satisfy
\begin{equation}\label{eq:bounds-q-ineq-WLP}
p \leq q \leq \frac{a_1+\cdots+a_n+1}{2}.
\end{equation} 
\end{theorem}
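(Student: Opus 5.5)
The plan is to translate WLP into a statement about the lowest weights of highest-weight vectors in a tensor product of $\mf{sl}_2$-type modules in characteristic $p$. Those vectors will then be analyzed $p$-adically, one prime power $q$ at a time.

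\emph{Step 1: reduce to a single linear form.} The torus $(\kk^\times)^n$ acts on $M_{\ul{a}}$ by rescaling the $T_i$. Any linear form with all coefficients nonzero is therefore equivalent to $\ell=T_1+\cdots+T_n$. A linear form with a zero coefficient is never better, since dropping a variable only lowers ranks. So WLP is equivalent to $\times\ell$ having maximal rank in every degree for $\ell=\sum T_i$.

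\emph{Step 2: identify $M_{\ul{a}}$ with a representation.} We identify $\kk[T]/\langle T^{1+a}\rangle$ with the divided power module $D^aU$ of the standard $\GL_2$-representation $U$. Under this identification, degree corresponds to weight and multiplication by $T$ is the raising operator $E$ (up to unit scalars). Then $M_{\ul{a}}\cong V:=D^{a_1}U\oo\cdots\oo D^{a_n}U$, and $\times\ell$ becomes the diagonal action of $E$. The Hilbert function of $M_{\ul{a}}$ is symmetric and unimodal, so WLP is equivalent to the following: $E$ is injective on $(M_{\ul{a}})_e$ for $2e+1<|\ul{a}|$, and surjectivity in the upper half follows by the self-duality of $V$.

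In cohomological language, this says $\op{Hom}_B(\kk_\lambda,V)=0$, equivalently $H^0(\PP^1,V\oo\mc{O}(-\lambda))$ vanishes in the appropriate range. Concretely, $V$ must have no $E$-invariant vector of weight less than $0$ when $|\ul{a}|$ is even, and less than $1$ when it is odd. I will compute these invariants through characters of the cohomology groups, as described in the introduction.

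\emph{Step 3: analyze $E$-invariants prime power by prime power.} For each $q=p^k$, write $a_i=qm_i+r_i$ with $\ul{r}$ a $q$-reduction. Then $D^{a_i}U$ admits a filtration or embedding involving $D^{r_i}U\oo (D^{m_i}U)^{(q)}$, where the second factor is a Frobenius twist. The first factor behaves like a truncated piece of dimension at most $2q$.

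I will then locate the possible low-weight $E$-invariants in two layers.
\begin{itemize}
\item In the truncated tensor factor $\bigotimes_i D^{r_i}U$, the lowest weight of an $E$-invariant vector is governed by $\sum\theta_q(r_i)$. This uses the interpretation of $\theta_q(r)$ as the maximal imbalance $|u-v|$ from Section~\ref{subsec:tensor-truncated}.
\item In the Frobenius-twisted part $\bigotimes (D^{m_i}U)^{(q)}$, the smallest achievable highest weight is $q\cdot g_+(\ul{m})$, by Corollary~\ref{cor:g+m=mina-b}.
\end{itemize}
Combining the two layers, an $E$-invariant vector of too negative weight exists for this $q$ exactly when \eqref{eq:thetaq-leq-gap} fails. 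This gives necessity: from a violating $(q,\ul{r})$ I will construct an explicit invariant vector as the tensor of an extremal truncated invariant with a Frobenius-twisted highest-weight vector of minimal gap. Sufficiency means every $E$-invariant vector arises this way for some $q$. I plan to prove it by induction on $\max a_i$, peeling off the top $p$-adic digit, using the character formulas to control all composition factors.

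\emph{Step 4: restrict the range of $q$.} For $q=1$, the only reduction is $\ul{r}=0$, and the inequality reads $0\le g_+(\ul{a})+2-2$, which holds. If $2q>|\ul{a}|+1$, then either $\ul{r}=\ul{a}$ is the unique reduction, or the Frobenius-twisted part is forced to be tiny. In either case, \eqref{eq:thetaq-leq-gap} follows from the trivial bound $\theta_q(r)\le\min(r,2q-2-r)$ and $\sum r_i\le|\ul{a}|$. This yields \eqref{eq:bounds-q-ineq-WLP}.

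The main obstacle is the sufficiency half of Step 3. One must show that no "mixed" invariants appear whose weights are lower than those predicted by a single $q$. This requires a precise understanding of how $E$-invariants in non-split extensions between the truncated and twisted layers interact. I would first attempt it via the cohomology character computation, checking that the Euler characteristic bounds are sharp.
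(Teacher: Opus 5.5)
Your Step~2 fails, and all of Step~3 rests on it. In characteristic $p$ the raising operator satisfies $E^p=p!\,E^{(p)}=0$ on every rational $\GL_2$-module. So $E$ can never act on $D^aU$ as a single nilpotent Jordan block of size $1+a$ once $a\geq p$. Already for $p=2$, $a=2$ you get $E(y^{(2)})=xy$ and $E(xy)=2x^{(2)}=0$. Thus $E$ on $D^2U$ has Jordan type $(2,1)$, while $T$ on $\kk[T]/(T^3)$ has type $(3)$.

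Consequently $\times\ell$ on $M_{\ul a}$ is not the diagonal action of $E$ on $\bigotimes_i D^{a_i}U$. This breaks exactly in the range $a_i\geq p$, where $q$-reductions and Frobenius twists matter. Your Step~2 is Watanabe's characteristic-zero $\mf{sl}_2$ model, and that model is what breaks in positive characteristic. Your cohomological rephrasing has the same problem. It also conflates $\ker E$ with $\op{Hom}_B(\kk_\lambda,V)$, which consists of vectors killed by all divided powers $E^{(k)}$.

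Step~3's decomposition of $D^{a_i}U$ through $D^{r_i}U\oo(D^{m_i}U)^{(q)}$ is also unjustified. Steinberg's theorem gives $L(r+qm)\cong L(r)\oo L(m)^{(q)}$ for simple modules with $r<q$, but nothing of this form holds for divided powers. Moreover, reductions with $q\leq r_i\leq 2q-1$ do not fit such a picture, and the criterion genuinely needs them (e.g.\ $\ul r=(p+r,1^{n-1})$ in the proof of Lemma~\ref{lem:hooks}).

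Even with a correct model, both directions of Step~3 are announced rather than proved. The sufficiency half, the ``mixed invariants'' problem you single out, is the real content of the theorem. The paper avoids the $\GL_2$ side altogether. It works with $\GL_n$, where $T=\sum x_iy_i$ is an invariant and $\ul a$ is a torus weight, and it uses that WLP fails iff $\ul a$ is a weight of $h^1(D^d\mc R(e))$ for some $d+e=s$, $e\geq d-1$.

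\emph{Necessity} uses the lower bound of Lemma~\ref{lem:bound-h1}. That bound follows from the functional equation in \cite{KMRR} once the correction term $\frac{1}{1-X}\bN(X,Y)$ is shown to be $\geq 0$. It is applied together with $s_{(a-1,b+1)}\leq h^1(D^a\mc R(b))$ for $a-b=g_++2$ and with Corollary~\ref{cor:terms-in-hqu-hqv}.

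\emph{Sufficiency} uses Lemma~\ref{lem:upbd-H1DdRe}. It iterates the recursion of \cite{KMRR}*{Theorem~1.5} and merges successive $p$-levels into a single $q=pq_0$ via $h^{(p)}_E\cdot F^p(h^{(q_0)}_{u_0})\leq h^{(q)}_{pu_0+E}$, stopping at a twisted factor $h^1(D^a\mc R(b))$ with $b\leq a-2$. This is combined with the gap bound $g(h^i(D^d\mc R(e)))\leq e-d$ of Proposition~\ref{prop:bound-gap}. That bound gives $g_+(\ul m)\leq a-b-2$, hence $\sum_i\theta_q(r_i)\geq u-v\geq (g_++2)q-1$. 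Without substitutes for these inputs, your Step~3 only restates what has to be shown.

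Your Step~4 is fine: if $2q\geq s+2$ then $\sum\theta_q(r_i)\leq\sum r_i\leq s\leq 2q-2$. For $q=1$, reductions with $r_i=1$ do exist, contrary to your claim, but they have $\theta_1(1)=-\infty$.
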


It is well-known that for $n=2$, WLP holds for all degree sequences $\ul{a}$ \cites{HMNW,MZ}. To obtain this as a consequence of Theorem~\ref{thm:new-WLP}, it suffices to combine the inequalities $\theta_q(r_i)\leq q-1$ and $g_+(\ul{a};\ul{r})\geq 0$. Without loss of generality, we will therefore assume that $n\geq 3$, and moreover $a_1\geq\cdots\geq a_n$. It is perhaps not immediately clear that if $a_1\geq a_2+\cdots+a_n$ then the conditions \eqref{eq:thetaq-leq-gap} hold (see Lemma~\ref{lem:gap-conditions-unbalanced}), so Theorem~\ref{thm:new-WLP} recovers the elementary statement that WLP holds for $M_{\ul{a}}$ under this hypothesis (see also \cite{mig-mr}*{Proposition~5.2}). We may therefore assume in addition that $a_1<a_2+\cdots+a_n$. The inequality \eqref{eq:thetaq-leq-gap} is automatic for $q=1$, and for an even balanced reduction $\ul{r}$ it becomes
\begin{equation}\label{eq:thetaq-leq-balanced}
\sum_{i=1}^n\theta_q(r_i)\leq 2q-2.
\end{equation}
In particular, if $q>\max(a_i)$ so that $\ul{r}=\ul{a}$ is the unique $q$-reduction, \eqref{eq:thetaq-leq-balanced} becomes $\sum_{i=1}^n a_i \leq 2q-2$, which explains why the only values of $q$ that need to be considered in Theorem~\ref{thm:new-WLP} are those for which \eqref{eq:bounds-q-ineq-WLP} holds.

\begin{example}\label{ex:2p-1s}
    Suppose that $p,n\geq 3$, and let $\ul{a}=(2p-1,2p-1,2p-1,1^{n-3})$. Theorem~\ref{thm:new-WLP} implies that WLP holds for $M_{\ul{a}}$ if and only if $n=3,4$. Indeed, for the $p$-reduction $\ul{r}=(p-1,p-1,p-1,1^{n-3})$ we have 
    \[m_i = (a_i-r_i)/p=\begin{cases}
        1 & i=1,2,3,\\
        0 & i>3.
    \end{cases}\]
    Since $|\ul{m}|=3$ is odd and $g(\ul{m}) = -1$, we get $g_+(\ul{a};\ul{r})=1$ and the inequality \eqref{eq:thetaq-leq-gap} becomes
    \[3p+n-6 = \sum_{i=1}^n\theta_p(r_i)\leq 3p-2,\]
    hence WLP fails for $n\geq 5$. We may therefore assume $n\leq 4$, so the above condition is satisfied. Since the inequality \eqref{eq:bounds-q-ineq-WLP} fails for $q\geq p^2\geq 3p$, it is enough to consider the conditions \eqref{eq:thetaq-leq-gap} when $q=p$. Besides the $p$-reduction of $\ul{a}$ we already considered, which satisfies \eqref{eq:thetaq-leq-gap} because $n\leq 4$, every other $p$-reduction $\ul{r}$ must satisfy $r_i = 2p-1$ for some $i\leq 3$. This implies $\theta_p(r_i)=-\infty$ for some $i\leq 3$, which in turn forces \eqref{eq:thetaq-leq-gap}.
\end{example}

In characteristic $p=2$, a smaller set of inequalities is sufficient to characterize WLP, as we explain next. For $a=\sum a_i\cdot 2^i$, $b=\sum b_i\cdot 2^i$, where $a_i,b_i\in\{0,1\}$ are the binary digits of $a,b$, we define the \defi{Nim-sum} $a\oplus b$ by performing digitwise addition modulo $2$:
\begin{equation}\label{eq:def-Nimsum} 
a\oplus b = c\text{ if and only if }a_i + b_i \equiv c_i \text{ (mod }2)\text{ for all }i,
\end{equation}
where $c_i$ are the binary digits of $c$.

\begin{theorem}\label{thm:WLP-char-2}
    If $\op{char}(\kk)=2$ then WLP holds for $M_{\ul{a}}$ if and only if
    \begin{equation}\label{eq:WLP-cond-char2} \sum_{i=1}^n \theta_q(a_i) \leq 2q-2 \text{ for all }q=2^k\quad\text{ with }\quad 2q>a_1\oplus a_2\oplus\cdots\oplus a_n.
    \end{equation}
    Moreover, in \eqref{eq:WLP-cond-char2} it suffices to consider those powers $q=2^k$ for which $q\leq 2\max(a_i)$.
\end{theorem}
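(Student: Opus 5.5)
Theorem~\ref{thm:WLP-char-2} will follow from Theorem~\ref{thm:new-WLP} specialized to $p=2$: we show that, for $q=2^k$, the family of inequalities \eqref{eq:thetaq-leq-gap} over all $q$-reductions $\ul{r}$ is equivalent to the single condition in \eqref{eq:WLP-cond-char2}. The Nim-sum enters through parities. If $r_i\equiv a_i \pmod q$, then $m_i=(a_i-r_i)/q$ is congruent mod $2$ to the $k$-th binary digit of $a_i$ when $r_i<q$, and to that digit plus one when $r_i\ge q$. Hence $|\ul{m}|$ has the parity of the $k$-th digit of $a_1\oplus\cdots\oplus a_n$, corrected by the number of indices with $r_i\ge q$.

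\textbf{Step 1: compare $\theta_q(r_i)$ with $\theta_q(a_i)$.} Write $a=2qm'+r'$ with $0\le r'<2q$. The $q$-reductions of $a$ are $r=r'$ and, when $r'<q$ and $a\ge r'+q$, the value $r=r'+q$. Only the residue of $r$ mod $2q$ matters, so $\theta_q(r)=\theta_q(a)$ exactly when $r\equiv a \pmod{2q}$. Call this the \emph{canonical} reduction. We will check the elementary inequality
\[
\theta_q(r+q)+\theta_q(r)\le 2q-2 \quad \text{for } 0\le r<q,
\]
where the value $-\infty$ occurs only at $r=q-1$. This says that shifting one coordinate by $q$ changes $\theta$ in a controlled way.

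\textbf{Step 2: the canonical reduction.} For the canonical reduction every $m_i$ is even, so $|\ul{m}|$ is even. The reduction is balanced if $\ul{m}$ is, and then \eqref{eq:thetaq-leq-gap} reads $\sum_i\theta_q(a_i)\le 2q-2$.

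\textbf{Step 3: necessity.} Suppose WLP holds and $2q>a_1\oplus\cdots\oplus a_n$. We need a $q$-reduction $\ul{r}$ that is even, satisfies $g(\ul{m})\le 0$, and has $\sum_i\theta_q(r_i)=\sum_i\theta_q(a_i)$. Then \eqref{eq:thetaq-leq-gap} yields the condition in \eqref{eq:WLP-cond-char2}.

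The hypothesis $2q>\bigoplus a_i$ says that all binary digits of $\bigoplus a_i$ in positions above $k$ vanish. Equivalently, the integers $\lfloor a_i/2q\rfloor$ have Nim-sum $0$. A family with Nim-sum $0$ satisfies the triangle inequality, since the largest entry cannot exceed the Nim-sum of the others, which is at most their sum. Therefore taking $\ul{m}=2\lfloor\ul{a}/2q\rfloor$ gives $g\le 0$ with $|\ul{m}|$ even. This is the key use of Nim-sums.

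\textbf{Step 4: sufficiency.} Assume \eqref{eq:WLP-cond-char2}. We must verify \eqref{eq:thetaq-leq-gap} for every $q$ and every $q$-reduction $\ul{r}$. The plan is an induction on $q$, moving non-canonical coordinates toward canonical ones using Step 1. Each shift by $q$ changes $|\ul{m}|$ by one and $g$ by at most $2$, while $g_+$ absorbs the $+q$ slack on the right-hand side.

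If $2q\le\bigoplus a_i$, some higher digit is odd. Then the $2q$-level data is unbalanced, which forces $g_+(\ul{a};\ul{r})$ large enough for the bound $\sum\theta_q(r_i)\le n(q-1)$ to suffice. Alternatively, we would use Lemma~\ref{lem:gap-conditions-unbalanced}.

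We expect this bookkeeping, matching the gap increments against the $\theta$ changes uniformly over all reductions, to be the main obstacle. The first attempt will be to reduce to reductions differing from the canonical one in at most one coordinate, using the parity structure.

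\textbf{Step 5: the range of $q$.} If $q>2\max(a_i)$ then $\theta_q(a_i)=a_i$. Also $\bigoplus a_i<2q$ holds automatically. So the condition for $q$ reads $\sum a_i\le 2q-2$, and it is implied by the condition at the smallest admissible larger $q$, or via the bound \eqref{eq:bounds-q-ineq-WLP}. Hence only $q\le 2\max(a_i)$ needs to be checked.
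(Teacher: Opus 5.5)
Your Step 3 is correct and is the paper's argument for WLP $\Rightarrow$ \eqref{eq:WLP-cond-char2}: take $r_i\equiv a_i \pmod{2q}$, note that $\ul m=2\lfloor \ul a/2q\rfloor$ has Nim-sum zero and is therefore balanced and even, and apply Theorem~\ref{thm:new-WLP}.

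The genuine gap is Step 4, the implication \eqref{eq:WLP-cond-char2} $\Rightarrow$ WLP. You give no argument there, and the mechanisms you propose are false.

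\emph{Inadmissible levels are not harmless.} A nonzero Nim-sum of the $\lfloor a_i/2q\rfloor$ does not make them unbalanced. Take $\ul a=(5,5,5)$ and $q=2$, so $a_1\oplus a_2\oplus a_3=5\geq 2q$. The reduction $\ul r=(1,1,1)$ has $\ul m=(2,2,2)$, $g_+=0$ and $\sum\theta_2(r_i)=3>2$. So \eqref{eq:thetaq-leq-gap} really fails at this inadmissible level. The matching failure of \eqref{eq:WLP-cond-char2} appears only at $q'=8$, where $\sum\theta_8(a_i)=15>14$; at $q'=4$ the sum is $3$.

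\emph{The canonical reduction does not control the others, even at an admissible level.} Take $\ul a=(3,3,3,1,1)$ and $q=2$, which is admissible since $\oplus a_i=3<4$. The canonical reduction gives $\sum\theta_2=-\infty$. However, $\ul r=(1,1,1,1,1)$ is odd and balanced with $g_+=1$ and $\sum\theta_2(r_i)=5>4$. Its first three coordinates are of the type $r'-q$ with $r'\geq q$, which your enumeration of reductions in Step 1 omits. Here \eqref{eq:WLP-cond-char2} fails only at $q'=4$.

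So no argument that shifts coordinates by $q$ within a fixed level can succeed. What you need is a cross-level transfer: every violation of \eqref{eq:thetaq-leq-gap} at some level must produce a violation of $\sum\theta_{q'}(a_i)\leq 2q'-2$ at some, generally different, $q'$ with $2q'>\oplus a_i$. That statement is the real content of the theorem, and your proposal does not prove it.

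The paper does not derive this direction from Theorem~\ref{thm:new-WLP} at all. It uses the explicit characteristic-$2$ formula \eqref{eq:h1DdRe-char2} for $h^1(D^d\mc{R}(e))$ in terms of the Nim polynomials $\mc{N}_m$. Telescoping via \eqref{eq:telescope-qschur} gives $\gamma_{d,e}=\sum F^{2q}(\mc{N}_m)\, h^{(q)}_u h^{(q)}_v$ with $u-v\geq 2q-1$ whenever $e\geq d-1$. From this one reads off that any weight $\ul a$ of $\gamma_{d,e}$ has $a_i=2qb_i+r_i$ with $b_1\oplus\cdots\oplus b_n=0$. Hence $\oplus a_i<2q$, and $\sum\theta_q(a_i)\geq 2q-1$ by Corollary~\ref{cor:terms-in-hqu-hqv}. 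The Nim-sum structure of the cohomology is exactly what supplies the cross-level transfer you are missing.

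A smaller point on Step 5: the implication runs from a smaller level to the larger ones, not the other way. The level $Q=2^k$ with $\max a_i<Q\leq 2\max a_i$ is admissible and its condition reads $\sum a_i\leq 2Q-2$, which implies the condition at every $q>Q$.
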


For a concrete illustration of Theorem~\ref{thm:WLP-char-2}, see Example~\ref{ex:WLP-n=6}. Conditions \eqref{eq:WLP-cond-char2} arise as special cases of \eqref{eq:thetaq-leq-gap} when we consider $q$-reductions where $r_i$ is the remainder for the division of $a_i$ by $2q$. If  $2q>a_1\oplus\cdots\oplus a_n$ then these reductions are automatically balanced and even, so $g_+(\ul{a};\ul{r})=0$. This is unlike the case of odd primes $p$, where Example~\ref{ex:2p-1s} shows that odd balanced reductions cannot be ignored when applying Theorem~\ref{thm:new-WLP}. Since $\theta_q(r_i)\leq q-1$, the inequality \eqref{eq:thetaq-leq-gap} is automatic when $g_+(\ul{a};\ul{r})\geq n-2$, so we may always restrict to reductions that satisfy $g_+(\ul{a};\ul{r})\leq n-3$. In particular, for $n = 3$ it is enough to consider even balanced reductions. Example~\ref{ex:unbalanced-reductions-hook} shows that for $p>2$ it is necessary to consider unbalanced reductions in \eqref{eq:thetaq-leq-gap}, and moreover that the corresponding gap can be as large as $p-2$, which illustrates why the case $p=2$ is so special.

If we know that \eqref{eq:WLP-cond-char2} holds for some $q>\max_{i=1}^n(a_i)$ then it must also hold for all $\tilde{q}\geq q$, since the hypothesis implies $\theta_q(a_i)=a_i=\theta_{\tilde{q}}(a_i)$. This explains why the restriction $q\leq 2\max(a_i)$ is sufficient.

The quantity $s = a_1+\cdots+a_n$ is the \defi{socle degree of $M_{\ul{a}}$}, so $(M_{\ul{a}})_s$ is one-dimensional, and $(M_{\ul{a}})_e=0$ for $e>s$. If $s\leq 2p-2$ then no $q$ satisfies \eqref{eq:bounds-q-ineq-WLP}, so Theorem~\ref{thm:new-WLP} implies that WLP holds for all $M_{\ul{a}}$ with socle degree $s\leq 2p-2$. We prove more generally the following result, which in particular recovers \cite{Cook12}*{Proposition~3.5(ii)} and gives a new proof of \cite{Cook12}*{Conjecture~7.4} (see \cite{lund-nick}*{Theorem~4.4}, \cite{Vraciu}*{Theorem~1.6} for earlier proofs).

\begin{theorem}\label{thm:WLP-from-socle}
 Fix $n\geq 3$. Every monomial complete intersection $M_{\ul{a}}$ of socle degree $s$ satisfies WLP if and only if one of the following holds:
 \begin{enumerate}
     \item $s\leq 2p-2$.
     \item $n=3$, and $(2t+2)q-4 \leq s\leq (2t+2)q-2$ for some $t,q$ with $1\leq t<p$, $q=p^k\geq p$.
     \item $n=4$, $p=2$ and $s=6$.
 \end{enumerate}
\end{theorem}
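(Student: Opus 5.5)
We will derive Theorem~\ref{thm:WLP-from-socle} from Theorem~\ref{thm:new-WLP}. The statement is about all $\ul{a}\in\bb{Z}^n_{\geq 0}$ with $|\ul{a}|=s$, so we need two things. First, for $s$ in the listed ranges, every such $\ul{a}$ satisfies \eqref{eq:thetaq-leq-gap} for all $q$ and all $q$-reductions. Second, for every other $s$, we exhibit one $\ul{a}$ with $|\ul{a}|=s$, together with a $q$ and a $q$-reduction, violating \eqref{eq:thetaq-leq-gap}. Case (1) is already settled: if $s\leq 2p-2$ then no $q$ satisfies \eqref{eq:bounds-q-ineq-WLP}.

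\textbf{Sufficiency for $n=3$.} Assume $n=3$, and by Lemma~\ref{lem:gap-conditions-unbalanced} assume also $a_1<a_2+a_3$. By the remark after Theorem~\ref{thm:WLP-char-2}, only even balanced reductions matter, so we must show $\sum_{i=1}^3\theta_q(r_i)\leq 2q-2$ for such reductions.

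Write $r_i=2qm_i'+\rho_i$ with $m_i'\in\{0,1\}$. Then $|\ul{m}|=\sum m_i$ is even and $\sum_i r_i\equiv s \pmod q$. Each $\theta_q(r_i)$ is at most $q-1$ and is a "tent" function of $\rho_i$, so $\sum\theta_q(r_i)\geq 2q-1$ forces the residues $\rho_i$ to lie near $q-1$. This in turn confines $s=\sum a_i$ modulo $2q$ to a window. The key computation is to show that $\sum\theta_q\geq 2q-1$ is possible only when $s\bmod 2q$ avoids $\{2q-4,2q-3,2q-2\}$, uniformly for every $q\leq (s+1)/2$ that divides appropriately.

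With $s=(2t+2)q-4+\epsilon$, $\epsilon\in\{0,1,2\}$, this handles the given $q$. For the other prime powers $q'$, we use the $p$-adic structure: $q'<q$ divides $q$, so $s\equiv -4+\epsilon \pmod{2q'}$ as well, when $q'\geq 3$ or $p$ is odd. For $q'>q$, the bound $t<p$ gives $s<2pq\leq 2q'$, hence \eqref{eq:bounds-q-ineq-WLP} fails. Case (3) is a finite check: $p=2$, $n=4$, $s=6$, using Theorem~\ref{thm:WLP-char-2} with $q\in\{2,4\}$.

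\textbf{Necessity.} For $s$ outside the ranges we construct counterexamples, in the spirit of Example~\ref{ex:2p-1s}.
\begin{itemize}
\item For $n\geq 5$, or $n=4$ with $(p,s)\neq(2,6)$, and $s\geq 2p-1$: take a $p$-reduction with several entries equal to $p-1$ plus ones, as in Example~\ref{ex:2p-1s}, and adjust the extra weight so that the total is $s$.
\item For $n=3$ and $s\geq 2p-1$ outside all windows: choose $q$ with $2q\leq s+1$ so that $s\bmod 2q$ (or $s\bmod 2pq$ after raising $q$) is not in the forbidden window. Then pick $a_i=m_iq+r_i$ with $r_i$ close to $q-1$ and $m_i$ balanced of even sum, giving $\sum\theta_q(r_i)\geq 2q-1$. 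If instead $t\geq p$, use the next power $pq$.
\end{itemize}

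The main obstacle is the $n=3$ bookkeeping: proving that the forbidden set of residues is exactly the window $[(2t+2)q-4,(2t+2)q-2]$ compatibly across all $q$. The delicate edge case is $q'=2$ when $p=2$, where the window wraps. We would first treat the case of a single $q$ and then propagate to other prime powers through divisibility.
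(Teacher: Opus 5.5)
\textbf{Verdict.} Your route differs genuinely from the paper's and is viable, and your $n=3$ analysis is right in substance. But your necessity argument for $n\geq 4$ has a concrete gap, and as written it fails for $p=2$, $n=4$.

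\textbf{Comparison with the paper.} Beyond case (1), the paper does not use Theorem~\ref{thm:new-WLP}. It observes that WLP for all $\ul a$ with $|\ul a|=s$ is equivalent to the single vanishing $H^1(\PP^{n-1},D^d\mc R(e))=0$, with $d=\lceil s/2\rceil$ and $e=\lfloor s/2\rfloor$. It then quotes the Gao--Raicu criterion $e\geq (t+n-2)q-n+1$, where $tq\leq d<(t+1)q$ and $1\leq t<p\leq q$. This handles every $n$ with one inequality. Your approach replaces that external input with explicit witnesses from Theorem~\ref{thm:new-WLP}, at the cost of case analysis.

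\textbf{The $n=3$ case.} The computation you only announce is short and should be written out. On $[0,2q-2]$ one has $\theta_q(r)=q-1-|r-(q-1)|$. For an even balanced reduction one has $s\equiv\sum r_i \pmod{2q}$ (modulo $2q$, not modulo $q$ as you wrote). Failure of \eqref{eq:thetaq-leq-gap} then means $\sum_i|r_i-(q-1)|\leq n(q-1)-(2q-1)$. For $n=3$ this forces $\sum r_i\in[2q-1,4q-5]$, whose residues mod $2q$ are exactly all classes except $2q-4,2q-3,2q-2$. Both directions for $n=3$ follow:
\begin{itemize}
\item For sufficiency, use this together with $q'\mid q$ for smaller prime powers.
\item For necessity, take $q$ to be the largest power of $p$ with $2q-1\leq s$. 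If $s$ mod $2q$ were forbidden, then $s$ would lie in a window with $1\leq t<p$.
\end{itemize}
The ``delicate edge case $q'=2$'' you flag does not exist, since $-4+\epsilon\equiv 2q'-4+\epsilon\pmod{2q'}$ for every $q'$.

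\textbf{The gap for $n\geq 4$.} Changing $\ul m$ while keeping the reduction even moves $s$ only by multiples of $2q$. So ``adjusting the extra weight'' cannot reach every $s$. You must show that the values of $\sum r_i$ compatible with $\sum\theta_q(r_i)\geq 2q-1$ meet every residue class mod $2q$. By the same computation, for $n=4$ these values fill $[2q-1,6q-7]$, which covers all residues exactly when $q\geq 3$. So your $p$-reduction recipe works for $p\geq 3$, and also for $p=2$ with $n\geq 5$.

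It breaks for $p=2$, $n=4$, $s\equiv 2\pmod 4$. Consider any $2$-reduction of any $\ul a$ with $|\ul a|=s$:
\begin{itemize}
\item If it is even and balanced, then $\sum r_i$ is even and $\equiv 2\pmod 4$. Hence the number of $r_i=1$ is even and not $4$, so $\sum\theta_2(r_i)\leq 2=2q-2$.
\item If it has $g_+\geq 1$, it would need $\sum\theta_2(r_i)\geq 5>4$.
\end{itemize}
So for $s=10$ no $2$-reduction of any $\ul a$ exhibits failure. Yet WLP fails, for example for $\ul a=(3,3,3,1)$ via $q=4$, where $\sum\theta_4(a_i)=10>6$.

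\textbf{Fix.} For $p=2$ and $n=4$, use $q=4$ when $s\geq 7$, and $q=2$ only for $s\in\{3,4,5\}$. This also explains the exceptional value $s=6$: there $q=4$ violates \eqref{eq:bounds-q-ineq-WLP}, and $q=2$ misses the residue $2$ mod $4$.
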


To illustrate the difficulty in understanding WLP for monomial complete intersections, the next result shows that WLP for a given degree sequence $\ul{a}$ is essentially independent from the corresponding property for a subsequence. This is unlike the case of the strong Lefschetz property, where failure of SLP for a  subsequence of $\ul{a}$ implies the failure of SLP for $\ul{a}$. As was discussed before, if $a_{n+1}\geq a_1+\cdots+a_n$ then WLP holds for $M_{(a_1,\cdots,a_{n+1})}$, so the more difficult question is whether we can find $a_{n+1}$ to make WLP fail. 

\begin{theorem}\label{thm:WLP-fail}
    If $n\geq p\geq 3$ and $a_1,\cdots,a_n\geq 1$ then there exists $a_{n+1}$ such that WLP fails for $M_{(a_1,\cdots,a_{n+1})}$.
\end{theorem}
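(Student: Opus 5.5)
The plan is to apply Theorem~\ref{thm:new-WLP} with $q=p$. I will construct $a_{n+1}$ together with a $p$-reduction $\ul{r}$ of $(a_1,\ldots,a_{n+1})$ that is balanced and even, so that $g_+=0$, and for which $\sum_{i=1}^{n+1}\theta_p(r_i)\geq 2p-1$. This violates \eqref{eq:thetaq-leq-gap}, so WLP fails.

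\emph{Step 1: the first $n$ residues.} For $i\leq n$, write $a_i\equiv \rho_i \pmod p$ with $0\leq\rho_i\leq p-1$.
\begin{itemize}
\item If $a_i<p$, then $r_i=\rho_i=a_i$ is forced, and $\theta_p(r_i)=a_i\geq 1$.
\item If $a_i\geq p$, I may take $r_i\in\{\rho_i,\rho_i+p\}$, since both satisfy $r_i\leq\min(a_i,2p-1)$. From \eqref{eq:def-theta-q} we have $\theta_p(\rho_i)=\rho_i$. Also $\theta_p(\rho_i+p)=p-2-\rho_i$ when $\rho_i\leq p-2$. I choose whichever value is larger, with $r_i=\rho_i$ if $\rho_i=p-1$. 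This gives $\theta_p(r_i)\geq\lceil (p-2)/2\rceil\geq 1$, which uses $p\geq 3$.
\end{itemize}
Hence $\sum_{i\leq n}\theta_p(r_i)\geq n\geq p$. Set $m_i=(a_i-r_i)/p\geq 0$ for $i\leq n$. Let $S=\sum_{i\leq n}m_i$ and $M=\max_{i\leq n}m_i$.

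\emph{Step 2: choosing $a_{n+1}$.} I take $a_{n+1}=p\,m_{n+1}+(p-1)$ and $r_{n+1}=p-1$. This is a legitimate $p$-reduction entry, and $\theta_p(p-1)=p-1$. Then
\[\sum_{i=1}^{n+1}\theta_p(r_i)\geq p+(p-1)=2p-1>2p-2.\]
It remains to pick $m_{n+1}$ so that $\ul{m}=(m_1,\ldots,m_{n+1})$ is balanced and $|\ul{m}|$ is even.
\begin{itemize}
\item If $S=0$, take $m_{n+1}=0$.
\item Otherwise take $m_{n+1}=M$. Then $|\ul{m}|=S+M\geq 2M=2\max(m_i)$, so $\ul{m}$ is balanced.
\item If $S+M$ is odd, instead take $m_{n+1}=M+1$. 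Balance now needs $2M+2\leq S+M+1$, that is $M+1\leq S$. This holds because $S\neq M$: if $S=M$ then $S+M$ would be even.
\end{itemize}
In every case $g(\ul{m})\leq 0$ and $|\ul{m}|$ is even, so $g_+=0$. The inequality \eqref{eq:thetaq-leq-gap} then reads $\sum\theta_p(r_i)\leq 2p-2$, which fails.

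\emph{Main obstacle.} The delicate step is the simultaneous control of balance and parity of $\ul{m}$, since the $a_i$ for $i\leq n$ are arbitrary and $m_1$ may dominate. The case analysis $m_{n+1}\in\{M,M+1\}$ above handles this. I also need to check that $q=p$ is admissible in \eqref{eq:bounds-q-ineq-WLP}; this is automatic because the "if and only if" of Theorem~\ref{thm:new-WLP} already applies to every $q$. Finally, the hypotheses $a_i\geq 1$ and $p\geq 3$ are used only to guarantee that each $\theta_p(r_i)\geq 1$.
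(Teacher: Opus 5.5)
Your proof is correct and follows the paper's argument: choose a $p$-reduction with every $\theta_p(r_i)\geq 1$ (the paper uses Lemma~\ref{lem:large-thetap} for the entries with $a_i\geq p$), then set $a_{n+1}=p\,m_{n+1}+(p-1)$ and $r_{n+1}=p-1$, which gives an even balanced reduction with $\sum\theta_p(r_i)\geq 2p-1$. The only difference is that the paper takes $m_{n+1}=m_1+\cdots+m_n$, which makes $|\ul{m}|=2m_{n+1}$ even and $g(\ul{m})=0$ at once, so the parity case split you flag as the main obstacle is not needed.
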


If $a_1+\cdots+a_n\leq p-1$ then $M_{(a_1,\cdots,a_{n+1})}$ satisfies WLP for all choices of $a_{n+1}$: this is because we either have $a_{n+1}\geq a_1+\cdots+a_n$  or $a_1+\cdots+a_{n+1}\leq 2p-2$, both of which imply WLP by the earlier discussion. It follows that the hypothesis $n\geq p$ of Theorem~\ref{thm:WLP-fail} is optimal. The conclusion of Theorem~\ref{thm:WLP-fail} also holds when $n>p=2$, but for $n=p=2$ there are exceptions, as discussed in Section~\ref{subsec:extend-seq}.

The key idea in our approach is to assemble all the monomial complete intersections into a single cohomological object, as follows. We let $R = \kk[x_1,\cdots,x_n,y_1,\cdots,y_n]$, and consider the local cohomology module $M=H^n_{(y_1,\cdots,y_n)}(R)$, which has a $\kk$-vector space decomposition
\[M=\bigoplus_{e_i,d_j\geq 0} \kk\cdot\frac{x_1^{e_1}\cdots x_n^{e_n}}{y_1^{1+d_1}\cdots y_n^{1+d_n}}.\]
There is a natural $\bb{Z}^n$-grading on $R$, where $\deg(x_i) = -\deg(y_i) = \vec{e}_i$ (the $i$-th standard unit vector), and we can give $M$ a $\bb{Z}^n$-graded $R$-module structure (which differs from the natural grading by $\ul{1}=(1,\cdots,1)$) by
\[M_{\ul{a}} = 0\text{ if some $a_i<0$, and otherwise }M_{\ul{a}} =  \bigoplus_{d_i+e_i=a_i}\kk\cdot\frac{x_1^{e_1}\cdots x_n^{e_n}}{y_1^{1+d_1}\cdots y_n^{1+d_n}}.\]
To reconcile this abuse of notation with \eqref{eq:def-Ma}, we write $T_i = x_iy_i$ and consider the polynomial subalgebra $\kk[T_1,\cdots,T_n]$ of $R$. It acts on $M$ by preserving the $\bb{Z}^n$-graded components, since $\deg(T_i)=\vec{0}$. If $a_i\geq 0$, each $M_{\ul{a}}$ is then a cyclic $\kk[T_1,\cdots,T_n]$-module, given~by
\begin{equation}\label{eq:Ma-cyclic-module}
M_{\ul{a}} = \kk[T_1,\ldots,T_n]\cdot\frac{1}{y_1^{1+a_1}\cdots y_n^{1+a_n}} \simeq \kk[T_1,\cdots,T_n]/\langle T_1^{1+a_1},\ldots,T_n^{1+a_n}\rangle.
\end{equation}
While \eqref{eq:def-Ma} defines $M_{\ul{a}}$ as an algebra, we are ultimately interested in understanding its structure as a $\kk[T]$-module, where $T=T_1+\cdots+T_n=x_1y_1+\cdots+x_ny_n$. The reason for this, as explained in \cite{lund-nick}*{Proposition~4.3}, is that the existence of a linear form $\ell$ for which \eqref{eq:timesT-maxrk} has maximal rank can be tested on $\ell=T$. Now the rank of multiplication by $T$ on the multigraded components of $M$ is controlled by the exact sequence
\[0 \lra H^{n-1}_{(y_1,\cdots,y_n)}(R/TR) \lra M \overset{\times T}{\lra} M \lra H^n_{(y_1,\cdots,y_n)}(R/TR) \lra 0. \]
The ring $R/TR$ has a geometric interpretation as the bihomogeneous coordinate ring of the incidence correspondence, the variety parametrizing pairs $(p,H)$ of a point in projective space and a hyperplane containing it. The local cohomology groups of $R/TR$ depend in intricate ways on the characteristic of $\kk$, and they have several equivalent reinterpretations (one of which will be used here, and is discussed in Section~\ref{subsec:coh-WLP}). A detailed study of the cohomology is undertaken in \cite{KMRR}, and in the current work we extract the crucial consequences that allow us to complete the characterization of WLP.

\medskip

\noindent{\bf Organization.} Section~\ref{sec:prelim} reviews the necessary background on characters and cohomology and translates WLP into a question about weights occurring in cohomology
characters. In Section~\ref{sec:bound-coh} we derive lower and upper bounds for the supports of cohomology characters. We use these bounds in Section~\ref{sec:characterize-WLP} to establish Theorem~\ref{thm:new-WLP}, and to derive the consequences presented above. In Section~\ref{sec:WLP-constant-degs} we explain how our results recover the classification of WLP for constant degree sequences from the work of Brenner--Kaid and Kustin--Vraciu. We conclude with a discussion of SLP in Section~\ref{sec:SLP}, employing the Green--Han--Monsky ring and the work of Renaud.

\section{Preliminaries}
\label{sec:prelim}

In this section, we review the necessary background on characters and representations, and use the gap and nonnegative gap functions \eqref{eq:defgap-g+} to characterize weights in a $2$-row Schur polynomial. Following \cite{KMRR}, we explain the relationship between monomial complete intersections and the characters of the cohomology groups of divided powers of the tautological subbundle on projective space, and we use this correspondence to reformulate WLP in cohomological terms. Finally, we explain how the function $\theta_q$ in \eqref{eq:def-theta-q} characterizes the weights that occur in products of two truncated complete symmetric polynomials.

\subsection{Conventions}
\label{subsec:conventions}
We work throughout over an arbitrary field $\kk$ of characteristic $p$. The reader may however assume that $\kk$ is infinite, or algebraically closed, since the cohomology groups we consider commute with extensions of the base field \cite{hartshorne}*{Proposition~III.9.3}, and the Lefschetz properties for monomial complete intersections only depend on the characteristic of the underlying field. In fact, as explained in \cite{MMN}*{Proposition~2.2} and \cite{lund-nick}*{Lemma~4.2,~Proposition~4.3}, the Lefschetz properties for $M_{\ul{a}}$ can be tested on the linear form $\ell = T = T_1+\cdots +T_n$ (this is formulated for WLP in loc. cit., but the argument is identical for SLP).  We will also assume throughout that $n\geq 2$ to avoid some degenerate situations, but the Lefschetz properties and the description of cohomology are trivial when $n=1$.

\subsection{Weights and characters}
\label{subsec:weight-chars}

Every representation $W$ of the algebraic torus $(\kk^\times)^n$ has a weight space decomposition \cite{BCRV}*{Lemma~9.7.9}
\[ W = \bigoplus_{\ul{u}=(u_1,\cdots,u_n)\in\bb{Z}^n} W_{\ul{u}}.\]
If $W$ is finite dimensional, we define the \defi{character of $W$} to be
\[[W] := \sum_{\ul{u}\in\bb{Z}^n} \dim(W_{\ul{u}})\cdot z_1^{u_1}\cdots z_n^{u_n} \in \bb{Z}[z_1^{\pm 1},\cdots,z_n^{\pm 1}].\]
We say that $\ul{u}$ is a weight in $W$ (or in $[W]$) if $W_{\ul{u}}\neq 0$. If $W$ is a representation of the algebraic group $\GL_n$, then $[W]$ is invariant under the coordinate permutation action of the symmetric group $\mf{S}_n$, that is, it belongs to the ring of symmetric Laurent polynomials (the \defi{character ring})
\begin{equation}\label{eq:def-Lambda} 
\Lambda = \bb{Z}[z_1^{\pm 1},\cdots,z_n^{\pm 1}]^{\mf{S}_n}.
\end{equation}
In particular, $[W]$ is determined by the \defi{dominant weights} $\ul{u}$, that is, those weights for which $u_1\geq u_2\geq\cdots\geq u_n$. 

If $\op{char}(\kk)=p$ then for each $q=p^k$, $k\geq 0$, we have the $q$-th Frobenius twist functor $F^q$. By a mild abuse of notation, we also write  $F^q$ for the corresponding endomorphisms of $\bb{Z}[z_1^{\pm 1},\cdots,z_n^{\pm 1}]$ and $\Lambda$, induced by $F^q(z_i)=z_i^q$. More generally, for polynomial extensions $\Lambda[t]$ or power series extensions $\Lambda[[X,Y]]$, we will write $F^q(t)=t^q$, $F^q(X)=X^q$, $F^q(Y)=Y^q$.

We introduce a partial order on $\bb{Z}[z_1^{\pm 1},\cdots,z_n^{\pm 1}]$ and $\Lambda$, defined by $f\geq g$ if $f-g$ has nonnegative coefficients. If $W,W'$ are representations of $(\kk^\times)^n$, we write $W\geq W'$ if $[W]\geq [W']$. In analogy with \eqref{eq:defgap-g+} we define
\begin{equation}\label{eq:gap-character}
g(W)=g([W])=\max\{g(\ul{u}) | W_{\ul{u}}\neq 0\},\quad g_+(W)=g_+([W])=\max\{g_+(\ul{u}) | W_{\ul{u}}\neq 0\},
\end{equation}
and note that if $W\geq W'$ then $g(W)\geq g(W')$ and $g_+(W)\geq g_+(W')$. If $W=0$ then we set $g(W)=-\infty$. 

Given a partial order $\geq$ on a ring $\Gamma$, we extend it to power series rings in one variable (and by induction in any number of variables) in the natural way: if $f(t)=\sum_{i\geq 0}f_i\cdot t^i$, $g(t)=\sum_{i\geq 0}g_i\cdot t^i$ are in $\Gamma[[t]]$ then $f(t)\geq g(t)$ if $f_i\geq g_i$ for all $i$. We will use this notation in Section~\ref{sec:bound-coh}
for $\Lambda[[t]]$ and $\Lambda[[X,Y]]$.

We will be particularly interested in the \defi{complete symmetric polynomials $h_d$}, $d\geq 0$, and the \defi{Schur polynomials $s_{(a,b)}$}, $a\geq b\geq 0$. For an introduction to Schur polynomials, the reader can consult \cite{fulton-YT}*{Section~2.2}. For a partition $\lambda$, let $\operatorname{SSYT}_n(\lambda)$ denote the set
of semistandard Young tableaux of shape $\lambda$ with entries in
$\{1,\dots,n\}$, that is, fillings of the Young diagram of $\lambda$ which are
weakly increasing along rows and strictly increasing down columns. For
$\mc{T}\in \operatorname{SSYT}_n(\lambda)$, the weight $\operatorname{wt}(\mc{T})=(u_1,\dots,u_n)$ records for each $k=1,\cdots,n$, the number $u_k$ of entries of $\mc{T}$ that are equal to $k$. If we write $z^{\mc{T}} = z_1^{u_1}\cdots z_n^{u_n}$ then
\[ s_{\ll} = \sum_{\mc{T}\in \operatorname{SSYT}_n(\lambda)} z^{\mc{T}}.\]

We have in particular
\begin{equation}
\begin{aligned}
    h_d &= s_{(d,0)} = \sum_{\substack{u_1+\cdots+u_n = d \\ u_j\geq 0}}z_1^{u_1}\cdots z_n^{u_n},\quad\text{ for }d\geq 0,\\
    s_{(a,b)} &= h_a\cdot h_b-h_{a+1}\cdot h_{b-1}=\sum_{\substack{
1\le i_1\le\cdots\le i_a\le n\\
1\le j_1\le\cdots\le j_b\le n\\
i_r<j_r,\ 1\le r\le b
}}
z_{i_1}\cdots z_{i_a}z_{j_1}\cdots z_{j_b},\quad\text{ for }a\geq b\geq 0.\\
\end{aligned}
\label{eq:def-hd-sab}
\end{equation}
More generally we set $h_d=0$ for $d<0$, and $s_{(a,b)} = h_a\cdot h_b-h_{a+1}\cdot h_{b-1}$ for all $a,b$. In particular, $s_{(d-1,d)}=0$ and $s_{(a,b)}=-s_{(b-1,a+1)}$.

\begin{lemma}\label{lem:wts-in-sab}
    Suppose that $a\geq b\geq 0$ and that $\ul{m}\in\bb{Z}^n_{\geq 0}$ with $|\ul{m}|=a+b$. The following are equivalent:
    \begin{enumerate}
        \item $\ul{m}$ is a weight in $s_{(a,b)}$.
        \item $\max(m_i)\leq a$.
        \item $g(\ul{m})\leq a-b$.
        \item $g_+(\ul{m})\leq a-b$.
    \end{enumerate}
\end{lemma}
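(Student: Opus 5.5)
We will prove the equivalences in the cyclic order (1)$\Rightarrow$(2)$\Rightarrow$(1), (2)$\Leftrightarrow$(3), and (3)$\Leftrightarrow$(4).

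\emph{(1)$\Leftrightarrow$(2).} For (1)$\Rightarrow$(2), take a semistandard tableau $\mc{T}$ of shape $(a,b)$ and weight $\ul{m}$. Entries equal to $k$ occupy at most one box per column, since columns are strictly increasing. There are exactly $a$ columns, so $m_k\leq a$.

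For (2)$\Rightarrow$(1), since $[s_{(a,b)}]$ is symmetric we may permute coordinates and assume $m_1\geq m_2\geq\cdots\geq m_n$. We then fill the $a+b$ boxes with the word $1^{m_1}2^{m_2}\cdots n^{m_n}$ in a specific order: first the first row from left to right, then the second row from left to right. Rows are weakly increasing by construction.

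For columns, the entry in box $(2,j)$ is the letter in position $a+j$ of the word, and the entry in box $(1,j)$ is the letter in position $j$. These two positions differ by $a$. If the letters were equal, that letter would occupy at least $a+1$ consecutive positions, contradicting $m_k\leq a$. Since the word is weakly increasing, the lower entry is therefore strictly larger, so the tableau is semistandard.

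\emph{(2)$\Leftrightarrow$(3).} Using $|\ul{m}|=a+b$, we have
\[g(\ul{m})=2\max(m_i)-(a+b)\leq a-b \iff 2\max(m_i)\leq 2a \iff \max(m_i)\leq a.\]

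\emph{(3)$\Leftrightarrow$(4).} Note that $g_+(\ul{m})\geq g(\ul{m})$, so (4)$\Rightarrow$(3) is immediate.

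For (3)$\Rightarrow$(4), it remains to check that the correction term in $g_+$ is at most $a-b$.
\begin{itemize}
\item If $|\ul{m}|$ is even, the correction is $0\leq a-b$.
\item If $|\ul{m}|=a+b$ is odd, then $a\neq b$, so $a-b\geq 1$, which again bounds the correction.
\end{itemize}

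There is no real obstacle in this argument. The only step needing care is the column-strictness check in the explicit tableau construction.
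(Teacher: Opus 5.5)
Your proof is correct and follows essentially the same route as the paper: a column-counting argument for (1)$\Rightarrow$(2), and for (2)$\Rightarrow$(1) filling the rows in order with the weakly increasing word $1^{m_1}\cdots n^{m_n}$ and checking column-strictness. The remaining steps also match: direct algebra for (2)$\Leftrightarrow$(3), and the parity observation ($a-b\geq 1$ when $a+b$ is odd) for (3)$\Leftrightarrow$(4). The only difference is your preliminary reordering of $\ul{m}$ using symmetry of $s_{(a,b)}$; this is harmless but unnecessary, since the construction works for any ordering of $\ul{m}$.
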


\begin{proof}
\noindent $(1) \Leftrightarrow (2)$. If $\max(m_i)>a$ then any tableau of shape $(a,b)$ with $\op{wt}(\mc{T})=\ul{m}$ must have equal entries in some column. Such a tableau is not semistandard, hence $\ul{m}$ is not a weight in $s_{(a,b)}$. If instead $\max(m_i)\leq a$, then we can construct a semistandard tableau $\mc{T}$ of weight $\ul{m}$ by defining its first row $(i_1,\cdots,i_a)$ and second row $(j_1,\cdots,j_b)$ so that $(i_1,\cdots,i_a,j_1,\cdots,j_b)=(1^{m_1},\cdots,n^{m_n})$: to see that $\mc{T}$ is semistandard, note that if $i_r=j_r$ then $i_r=i_{r+1}=\cdots=i_a=j_1=\cdots=j_r$ have the same value $k$, which can only happen when $m_k>a$. It follows that $\ul{m}$ is a weight in~$s_{(a,b)}$ by \eqref{eq:def-hd-sab}.

\noindent $(2) \Leftrightarrow (3)$. Since $g(\ul{m})=2\max(m_i)-(a+b)$, we have $\max(m_i)\leq a$ if and only if $g(\ul{m})\leq 2a-(a+b)=a-b$.

\noindent $(3) \Leftrightarrow (4)$. Since $a-b\geq 0$ has the same parity as $|\ul{m}|=a+b$, it follows from \eqref{eq:defgap-g+} that $g(\ul{m})\leq a-b$ if and only if $g_+(\ul{m})\leq a-b$.
\end{proof}

\begin{corollary}\label{cor:g+m=mina-b}
    If $\ul{m}\in\bb{Z}^n_{\geq 0}$ then
\begin{equation}\label{eq:g+=mina-b}
g_+(\ul{m}) = \min\{a-b:\ul{m}\text{ is a weight in }s_{(a,b)}\}.
\end{equation}
    If $a\geq b\geq 0$ then (using notation \eqref{eq:gap-character}) we have
\begin{equation}\label{eq:gap-sab}
g(s_{(a,b)}) = g_+(s_{(a,b)}) = a-b\text{ for all }a\geq b\geq 0.
\end{equation}
\end{corollary}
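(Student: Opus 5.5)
Both parts follow from Lemma~\ref{lem:wts-in-sab} together with a short parity argument.

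For \eqref{eq:g+=mina-b}, fix $\ul{m}$ and set $N=|\ul{m}|$. A weight of $s_{(a,b)}$ has size $a+b$, so only pairs with $a+b=N$ and $a\geq b\geq 0$ matter. For each such pair, the equivalence $(1)\Leftrightarrow(4)$ of Lemma~\ref{lem:wts-in-sab} says that $\ul{m}$ is a weight in $s_{(a,b)}$ exactly when $g_+(\ul{m})\leq a-b$. The differences $a-b$ that occur run over all integers $d$ with $0\leq d\leq N$ and $d\equiv N \pmod 2$.

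To see that the minimum equals $g_+(\ul{m})$, it remains to check that $d=g_+(\ul{m})$ itself is one of these differences. First, $g_+(\ul{m})$ has the parity of $N$. Indeed $g(\ul{m})=2\max(m_i)-N\equiv N$, and the fallback values $0$ (for $N$ even) and $1$ (for $N$ odd) also have that parity. Second, $g_+(\ul{m})\leq N$: we have $g(\ul{m})\leq 2N-N=N$, and in the odd case $1\leq N$. Hence $a=(N+g_+)/2$ and $b=(N-g_+)/2$ are nonnegative integers with $a\geq b$. By the lemma, $\ul{m}$ is a weight in $s_{(a,b)}$, and no smaller difference $a-b$ can work. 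This proves \eqref{eq:g+=mina-b}.

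For \eqref{eq:gap-sab}, fix $a\geq b\geq 0$. Every weight $\ul{m}$ of $s_{(a,b)}$ satisfies $g(\ul{m})\leq g_+(\ul{m})\leq a-b$ by the lemma. For the reverse inequality, the weight $\ul{m}=(a,b,0,\dots,0)$ occurs: it satisfies $\max(m_i)=a$, which is condition (2) of the lemma, and this uses $n\geq 2$. For this weight $g(\ul{m})=2a-(a+b)=a-b$, so $g(s_{(a,b)})\geq a-b$. Combining with $g\leq g_+\leq a-b$ on all weights gives $g(s_{(a,b)})=g_+(s_{(a,b)})=a-b$.

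The only point needing care is the parity and range check that makes $g_+(\ul{m})$ an achievable value of $a-b$; everything else is immediate from the lemma.
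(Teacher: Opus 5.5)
Your proposal is correct and follows essentially the same route as the paper: in both, \eqref{eq:g+=mina-b} comes from Lemma~\ref{lem:wts-in-sab} by taking $a=(|\ul{m}|+g_+(\ul{m}))/2$ and $b=(|\ul{m}|-g_+(\ul{m}))/2$ via the parity match, and \eqref{eq:gap-sab} comes from the weight $(a,b,0,\dots,0)$ combined with the lemma's upper bound. Your explicit check that $g_+(\ul{m})\leq|\ul{m}|$, which guarantees $b\geq 0$, supplies a small detail the paper leaves implicit.
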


\begin{proof} Lemma~\ref{lem:wts-in-sab} implies that if $\ul{m}$ is a weight in $s_{(a,b)}$ then $g_+(\ul{m})\leq a-b$, so in order to prove \eqref{eq:g+=mina-b} we need to find $a,b$ such that $\ul{m}$ is a weight in $s_{(a,b)}$ with $g_+(\ul{m})=a-b$. Since $|\ul{m}|$ and $g_+(\ul{m})$ have the same parity, we can then set
\[
a=\frac{|\ul{m}|+g_+(\ul{m})}{2}\quad\text{ and }\quad b=\frac{|\ul{m}|-g_+(\ul{m})}{2}.
\]

    Since $(a,b,0,\cdots,0)$ is a weight in $s_{(a,b)}$, it follows that $g_+(s_{(a,b)}) \geq g(s_{(a,b)})\geq a-b$. Using Lemma~\ref{lem:wts-in-sab} we get $g_+(s_{(a,b)})\leq a-b$, which proves \eqref{eq:gap-sab}.
\end{proof}

\subsection{Cohomology characters and WLP}
\label{subsec:coh-WLP}

We let $V=\kk^n$, let $S=\Sym(V) \simeq \kk[x_1,\cdots,x_n]$ be the symmetric algebra of $V$, and write $\PP=\bb{P}V=\op{Proj}(S)$ for the corresponding projective space parametrizing $1$-dimensional quotients of $V$. It comes with a tautological exact sequence
\begin{equation}\label{eq:ses-on-PV}
0 \lra \mc{R} \lra V \oo \mc{O}_{\PP} \lra \mc{O}_{\PP}(1) \lra 0,
\end{equation}
where $\mc{R}$ denotes the tautological rank $(n-1)$ subbundle of $V \oo \mc{O}_{\PP}$. For $d\geq 0$ we consider the divided power $D^d\mc{R}$ defined as the $\mf{S}_d$-invariant subsheaf of the tensor power $\mc{R}^{\oo d}$, relative to the permutation action of the symmetric group $\mf{S}_d$. We set $D^d\mc{R}=0$ for $d<0$. The sheaf cohomology groups of $D^d\mc{R}(e)$ are representations of $\GL_n$, and we consider their characters
\[ h^i(D^d\mc{R}(e)) = \left[H^i(\PP,D^d\mc{R}(e))\right].\]
It follows from \cite{KMRR}*{(1.24),\ (1.25),\ (2.7)} that
\begin{equation}\label{eq:identities-hiDdre}
    h^1(D^d\mc{R}(e)) = h^0(D^{e+1}\mc{R}(d-1)),\quad h^0(D^d\mc{R}(e))-h^1(D^d\mc{R}(e)) = s_{(e,d)},\quad\text{for }d\geq 0,e\geq -1.
\end{equation}

The algebra $M_{\ul{a}}$ is a complete intersection with socle degree $s=a_1+\cdots+a_n$, hence \eqref{eq:timesT-maxrk} has maximal rank for all $e$ if and only if it is surjective for $e\geq \lfloor s/2\rfloor$. Hence WLP fails for $M_{\ul{a}}$ if and only if
\[ \left(\frac{M_{\ul{a}}}{T\cdot M_{\ul{a}}}\right)_{e+1} \neq 0 \quad\text{ for some }e \geq \left\lfloor\frac{s}{2}\right\rfloor,\text{ if and only if}\quad\left(\frac{M_{\ul{a}}}{T\cdot M_{\ul{a}}}\right)_{e+1} \neq 0 \quad\text{ for }e = \left\lfloor\frac{s}{2}\right\rfloor,\]
where $T=T_1+\cdots+T_n$. As explained in \cite{KMRR}*{Section~2.5}, if $d+e=s$ then there is an identification
\[H^1(\PP,D^d\mc{R}(e))_{\ul{a}} = \left(\frac{M_{\ul{a}}}{T\cdot M_{\ul{a}}}\right)_{e+1}.\]
It follows that 
\begin{equation}\label{eq:coh-characterization-WLP}
\text{$M_{\ul{a}}$ fails WLP if and only if $\ul{a}$ is a weight in } h^1(D^d\mc{R}(e))\text{ for some }d,e\text{ with }d+e=s,\ e\geq d-1.
\end{equation}

Equivalently, the failure of WLP is characterized by the \defi{support sets}
\begin{equation}\label{eq:def-supp-d-e} \op{supp}(d,e) = \left\{ \ul{a}\in\bb{Z}^n : H^1\left(\PP^{n-1},D^d\mc{R}(e)\right)_{\ul{a}} \neq 0\right\}.
\end{equation}
Since the algebra $A=M_{\ul{a}}/(T\cdot M_{\ul{a}})$ is generated in degree $1$, it follows that for $e\geq 0$ the non-vanishing $A_{e+1}\neq 0$ implies $A_e\neq 0$, hence the support sets satisfy the inclusion relations
\begin{equation}\label{eq:incl-supp-de}
    \op{supp}(d,e) \supseteq \op{supp}(d-1,e+1) \supseteq \cdots \supseteq \op{supp}(d-f,e+f)\quad\text{ for all }f\geq 0.
\end{equation}
It is then natural to consider the characters
\begin{equation}\label{eq:def-gamma}
    \gamma_{d,e} = \sum_{f\geq 0}h^1(D^{d-f}\mc{R}(e+f)),
\end{equation}
and obtain from \eqref{eq:coh-characterization-WLP} that
\begin{equation}\label{eq:WLP-from-gamma}
\text{$M_{\ul{a}}$ fails WLP if and only if $\ul{a}$ is a weight in } \gamma_{d,e}\text{ for some }d,e\text{ with }d+e=s,\ e\geq d-1.
\end{equation}
    
We conclude this section by noting that the characterization \eqref{eq:coh-characterization-WLP} provides a simple explanation for the fact that WLP holds for all $\ul{a}$ when $n=2$ \cites{HMNW,MZ}: indeed, we have $D^d\mc{R}(e) \simeq \mc{O}_{\PP^1}(e-d)$, and the condition $e\geq d-1$ forces $H^1(\PP^1,\mc{O}_{\PP^1}(e-d))=0$.

\subsection{Weights in products of truncated symmetric polynomials}
\label{subsec:tensor-truncated}

In analogy with \eqref{eq:def-hd-sab} we define for each $q>0$ the \defi{truncated complete symmetric polynomials}
\begin{equation}\label{eq:def-trunc-hd}
h^{(q)}_d = \sum_{\substack{u_1+\cdots+u_n=d \\ 0\leq u_j<q}} z_1^{u_1}\cdots z_n^{u_n},
\end{equation}
noting that $h^{(q)}_d=0$ for $d<0$. We will be interested in the case when $q=p^k$ is a power of the characteristic, but the definition above and the following results make sense in general. We can interpret the function $\theta_q$ in \eqref{eq:def-theta-q} as measuring the gap for a truncated complete symmetric polynomial in $n=2$ variables:
\[\theta_q(r) = g\left(h^{(q)}_r(z_1,z_2)\right)\quad\text{ for }r=0,\cdots,2q-1.\]

\begin{lemma}\label{lem:thetaq-meaning}
    Consider integers $q\geq 1$ and $0\leq r\leq 2q-2$. For $\Delta\geq 0$, we can find $a,b$ satisfying
    \begin{equation}\label{eq:a-b-del-q} 
    r = a+b,\quad a-b \geq \Delta,\quad 0\leq a,b<q
    \end{equation}
    if and only if $\Delta \leq \theta_q(r)$.
\end{lemma}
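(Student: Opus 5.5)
The plan is to reduce the existence question to finding the largest possible value of $a-b$ subject to \eqref{eq:a-b-del-q}, and to check that this maximum equals $\theta_q(r)$. Since $r\leq 2q-2$, the case $r=2q-1$ (where $\theta_q=-\infty$) does not occur, so the claim is: the set of admissible pairs is nonempty, and $\max\{a-b\}$ over admissible pairs is $\theta_q(r)$. Given this, the equivalence follows at once. If $\Delta\leq\theta_q(r)$, the maximizing pair already satisfies $a-b\geq\Delta$. Conversely, any admissible pair gives $\Delta\leq a-b\leq\theta_q(r)$.

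To compute the maximum, write $b=r-a$. The constraints $0\leq a,b\leq q-1$ become
\[\max(0,r-q+1)\leq a\leq\min(r,q-1),\]
and $a-b=2a-r$ is increasing in $a$. So we take $a=\min(r,q-1)$; this is the step that needs checking.

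\emph{Case $r\leq q-1$.} Take $a=r$ and $b=0$. Both lie in $[0,q-1]$, and $a-b=r=\theta_q(r)$.

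\emph{Case $q-1\leq r\leq 2q-2$.} Take $a=q-1$ and $b=r-q+1$. Then $0\leq b\leq q-1$, and
\[a-b=2q-2-r=\theta_q(r).\]

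The two formulas agree at $r=q-1$, which is consistent with the overlap in \eqref{eq:def-theta-q}. Since $a\leq\min(r,q-1)$ for every admissible pair, this choice attains the maximum. Here $r<2q$, so the decomposition $r=2q\cdot 0+r$ is the one used in the definition of $\theta_q$.

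Nothing here is a real obstacle. The only care needed is to verify the bounds $0\leq b<q$ in the second case and to note that $\Delta\geq0$ plays no role beyond making the statement meaningful.
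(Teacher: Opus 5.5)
Your proof is correct and is essentially the paper's argument. For the ``if'' direction you use the same witnesses $(a,b)=(r,0)$ and $(q-1,\,r-q+1)$. For the ``only if'' direction you use the same bounds $b\geq 0$ and $a\leq q-1$, packaged as the single bound $a\leq\min(r,q-1)$ on the increasing quantity $a-b=2a-r$.
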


\begin{proof}
    For the \emph{only if} direction, suppose that there exist $a,b\geq 0$ satisfying \eqref{eq:a-b-del-q}. If $r\leq q-1$ then 
    \[ \theta_q(r) = r = a+b \geq a-b \geq \Delta.\]
    If instead $q\leq r \leq 2q-2$ then $a\leq q-1$ and $b=r-a \geq r-(q-1)$, hence
    \[ \Delta \leq a-b \leq (q-1) - (r-(q-1)) = \theta_q(r).\]

    For the \emph{if} direction, suppose that $\Delta\leq\theta_q(r)$. If $r\leq q-1$ then we can take $a=r$, $b=0$, in which case we have $a-b=r=\theta_q(r)\geq\Delta$, and \eqref{eq:a-b-del-q} holds. If $q\leq r\leq 2q-2$ then $a=q-1$ and $b=r-(q-1)$ satisfy \eqref{eq:a-b-del-q}, concluding our proof.
\end{proof}

As a direct consequence of Lemma~\ref{lem:thetaq-meaning}, we get the following characterization of the terms that appear in a product of two truncated symmetric polynomials.

\begin{corollary}\label{cor:terms-in-hqu-hqv}
    Consider nonnegative integers $r_1,\cdots,r_n\leq 2q-2$. We have that $z_1^{r_1}\cdots z_n^{r_n}$ is a term in $h_u^{(q)}\cdot h_v^{(q)}$ for some $u,v$ with $u-v\geq\Delta$ if and only if
    \begin{equation}\label{eq:sum-thetari-geqDelta} 
    \sum_{i=1}^n \theta_q(r_i) \geq \Delta.
    \end{equation}
\end{corollary}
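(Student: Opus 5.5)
The plan is to reduce the statement to Lemma~\ref{lem:thetaq-meaning}, applied one coordinate at a time.

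The product $h_u^{(q)}\cdot h_v^{(q)}$ expands as the sum of $z^{\ul{\alpha}}z^{\ul{\beta}}$ over exponent vectors $\ul{\alpha},\ul{\beta}$ with $0\leq\alpha_i,\beta_i<q$, $|\ul{\alpha}|=u$ and $|\ul{\beta}|=v$. All coefficients are nonnegative, so there is no cancellation. Hence $z_1^{r_1}\cdots z_n^{r_n}$ is a term in $h_u^{(q)}\cdot h_v^{(q)}$ for some $u,v$ with $u-v\geq\Delta$ exactly when we can write $r_i=\alpha_i+\beta_i$ with $0\leq\alpha_i,\beta_i<q$ for every $i$, in such a way that $\sum_i(\alpha_i-\beta_i)\geq\Delta$. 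The integers $u,v$ are then simply $|\ul{\alpha}|$ and $|\ul{\beta}|$.

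Next we compute, for each $i$, the largest possible value of $\alpha_i-\beta_i$ subject to $\alpha_i+\beta_i=r_i$ and $0\leq\alpha_i,\beta_i<q$. Since $r_i\leq 2q-2$, Lemma~\ref{lem:thetaq-meaning} applies. Taking $\Delta=\theta_q(r_i)\geq 0$ there gives a decomposition achieving $\alpha_i-\beta_i\geq\theta_q(r_i)$. The "only if" direction of that lemma, applied with $\Delta=\alpha_i-\beta_i$, shows that every decomposition satisfies $\alpha_i-\beta_i\leq\theta_q(r_i)$; this is valid when $\alpha_i-\beta_i\geq 0$, and is trivial otherwise because $\theta_q(r_i)\geq 0$. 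So the maximum of $\alpha_i-\beta_i$ is exactly $\theta_q(r_i)$.

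The constraints on the pairs $(\alpha_i,\beta_i)$ are independent across different $i$. Therefore the maximum of $\sum_i(\alpha_i-\beta_i)$ over all admissible decompositions equals $\sum_i\theta_q(r_i)$, and it is attained by choosing the maximizer in each coordinate. It follows that a decomposition with $\sum_i(\alpha_i-\beta_i)\geq\Delta$ exists if and only if \eqref{eq:sum-thetari-geqDelta} holds.

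There is no genuine obstacle here. The points to check are that the expansion of the product has no cancellation, and the small sign issue of applying Lemma~\ref{lem:thetaq-meaning} only when $\alpha_i-\beta_i\geq0$. The condition $r_i\leq 2q-2$ guarantees that $\theta_q(r_i)$ is finite and that the lemma applies.
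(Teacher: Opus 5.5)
Your proposal is correct and follows essentially the same route as the paper: you expand $h_u^{(q)}h_v^{(q)}$ without cancellation and apply Lemma~\ref{lem:thetaq-meaning} coordinatewise, using the bound $\alpha_i-\beta_i\leq\theta_q(r_i)$ for the forward direction and the maximizing decompositions for the converse. Your handling of the case $\alpha_i-\beta_i<0$ via $\theta_q(r_i)\geq 0$ is the same as the paper's use of $\Delta_i=\max(a_i-b_i,0)$.
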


\begin{proof}
    Suppose first that $z_1^{r_1}\cdots z_n^{r_n}$ is a term in $h_u^{(q)}\cdot h_v^{(q)}$ for some $u,v$ with $u-v\geq\Delta$, and write $r_i=a_i+b_i$, with $\ul{a}$ a weight in $h_u^{(q)}$ and $\ul{b}$ a weight in $h_v^{(q)}$. If we set $\Delta_i = \max(a_i-b_i,0)$ then, since $\theta_q(r_i)\geq 0$, it follows from Lemma~\ref{lem:thetaq-meaning} that $\Delta_i\leq\theta_q(r_i)$. Combining this with the inequality $\Delta_i\geq a_i-b_i$, we conclude that
    \[\sum_{i=1}^n \theta_q(r_i) \geq \sum_{i=1}^n \Delta_i \geq \sum_{i=1}^n(a_i-b_i) = u-v \geq \Delta.\]

    Suppose next that \eqref{eq:sum-thetari-geqDelta} holds, and use Lemma~\ref{lem:thetaq-meaning} to find $0\leq a_i,b_i<q$ with $r_i=a_i+b_i$, $a_i-b_i = \theta_q(r_i)$. If we set $u=|\ul{a}|$ and $v=|\ul{b}|$ then $\ul{a}$ is a weight in $h_u^{(q)}$ and $\ul{b}$ is a weight in $h_v^{(q)}$. It follows that $z_1^{r_1}\cdots z_n^{r_n}$ is a term in $h_u^{(q)}\cdot h_v^{(q)}$ and
    \[u-v = \sum_{i=1}^n(a_i-b_i) = \sum_{i=1}^n \theta_q(r_i) \geq \Delta,\]
    concluding our proof.
\end{proof}

\section{Bounding cohomology characters}
\label{sec:bound-coh}

Although the complete description of the cohomology characters $h^1(D^d\mc{R}(e))$ is rather involved \cite{KMRR}, considerably less information is needed to characterize WLP via \eqref{eq:coh-characterization-WLP} or \eqref{eq:WLP-from-gamma}. The purpose of this section is to derive the bounds on these characters that will be used in the proof of the WLP criterion in Theorem~\ref{thm:new-WLP}. 

We start by considering the bivariate series $\bG(X,Y)\in\Lambda[[X,Y]]$, defined by (see \cite{KMRR}*{(1.7)})
\begin{equation}\label{eq:def-Guv}
    \bG(X,Y) = \sum_{d\geq 0,\ e\geq -1} h^1(D^d\mc{R}(e))\cdot X^d\cdot Y^{d+e},
\end{equation}
noting that for $d=0$, $e=-1$, we have $h^1(D^d\mc{R}(e))=0$, so all the exponents appearing in \eqref{eq:def-Guv} are indeed nonnegative. Recalling the conventions from Section~\ref{subsec:weight-chars}, we have that $\bG(X,Y)\geq 0$.

Motivated by \eqref{eq:WLP-from-gamma}, we define
\begin{equation}\label{eq:def-Wuv}
    \bW(X,Y) = \frac{1}{1-X}\cdot \bG(X,Y),
\end{equation}
and observe that for $d\geq 0$, $e\geq -1$, the coefficient of $X^d\cdot Y^{d+e}$ in $\bW(X,Y)$ is $\gamma_{d,e}$. 

If we define $\bh^{(q)}(t)\in\Lambda[t]$ for $q=p^k$ via
\[\bh^{(q)}(t) = \prod_{i=1}^n(1+tz_i+t^2z_i^2+\cdots+t^{q-1}z_i^{q-1}) = \sum_{d\geq 0}h^{(q)}_d(\ul{z})\cdot t^d,\]
then it follows from \eqref{eq:def-Wuv} and \cite{KMRR}*{Theorem~1.3} that 
\begin{equation}\label{eq:fun-eqn-WXY}
\bW(X,Y) = \bh^{(p)}(XY)\cdot \bh^{(p)}(Y)\cdot F^p(\bW(X,Y))+\frac{1}{1-X}\cdot\bN(X,Y),
\end{equation}
for some series $\bN(X,Y)\in\Lambda[[X,Y]]$. More precisely, for $c=1,\cdots,p-1$ there exist power series $\bN_c(t)\geq 0$ in $\Lambda[[t]]$ where all the exponents of $t$ have the same parity as $c+1$ such that (see \cite{KMRR}*{(5.19),\ (1.13)})
\[    
\bN(X,Y) = \sum_{c=1}^{p-1} \bN_c\left(X^{1/2}Y\right)\cdot X^{1/2}\cdot\frac{X^{c/2}-X^{p-c/2}}{1-X^p}.
\]
This shows that
\[\frac{1}{1-X}\cdot\bN(X,Y) = \sum_{c=1}^{p-1}\bN_c\left(X^{1/2}Y\right)\cdot X^{(c+1)/2}\cdot\frac{1+X+\cdots+ X^{p-1-c}}{1-X^p} \geq 0,\]
which combined with \eqref{eq:fun-eqn-WXY} implies
\begin{equation}\label{eq:W-geq-hhFW}
    \bW(X,Y) \geq \bh^{(p)}(XY)\cdot \bh^{(p)}(Y)\cdot F^p(\bW(X,Y)).
\end{equation}

\begin{lemma}\label{lem:bound-h1}
    If $q=p^k$, $d\geq 0$, $e\geq -1$, then we have a character inequality
    \[\gamma_{d,e} \geq \sum_{a\geq 0,\ b\geq -1}h^{(q)}_{d-aq}\cdot h^{(q)}_{e-bq}\cdot F^q\left(\gamma_{a,b}\right).\]
\end{lemma}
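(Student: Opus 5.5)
Iterate the inequality \eqref{eq:W-geq-hhFW} $k$ times, working throughout with the generating series $\bW(X,Y)=\sum_{d\geq 0,\,e\geq -1}\gamma_{d,e}X^dY^{d+e}$, and then read off the coefficient of $X^dY^{d+e}$. The plan is to prove by induction on $k$ that for $q=p^k$
\[\bW(X,Y)\geq \bh^{(q)}(XY)\cdot\bh^{(q)}(Y)\cdot F^q(\bW(X,Y)).\]
For $k=0$ we have $\bh^{(1)}=1$, $F^1=\mathrm{id}$, and the inequality is an equality. For the inductive step, apply $F^{q}$ to \eqref{eq:W-geq-hhFW}. Since $F^q$ is a ring endomorphism sending series with nonnegative coefficients to series with nonnegative coefficients, it preserves $\geq$. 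This gives $F^q(\bW)\geq F^q(\bh^{(p)}(XY))\,F^q(\bh^{(p)}(Y))\,F^{pq}(\bW)$. Next multiply by $\bh^{(q)}(XY)\bh^{(q)}(Y)\geq 0$; multiplication by a nonnegative series preserves $\geq$, since products of nonnegative elements of $\Lambda$ are nonnegative. Finally use the identity
\[\bh^{(q)}(t)\cdot F^q(\bh^{(p)}(t))=\prod_i\Big(\sum_{j<q}t^jz_i^j\Big)\Big(\sum_{l<p}t^{lq}z_i^{lq}\Big)=\bh^{(pq)}(t),\]
which holds by uniqueness of base-$q$ expansion $j+lq$ with $0\le j<q$, $0\le l<p$. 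Chaining this with the inductive hypothesis gives the statement for $pq$.

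Then extract coefficients. We have $F^q(\bW)=\sum_{a\geq0,b\geq-1}F^q(\gamma_{a,b})X^{aq}Y^{(a+b)q}$, and $\bh^{(q)}(XY)\bh^{(q)}(Y)=\sum_{i,j}h^{(q)}_ih^{(q)}_jX^iY^{i+j}$. The coefficient of $X^dY^{d+e}$ in the product therefore comes from $i=d-aq$ and $i+j+(a+b)q=d+e$, i.e.\ $j=e-bq$. This yields exactly $\sum_{a\ge0,b\ge-1}h^{(q)}_{d-aq}h^{(q)}_{e-bq}F^q(\gamma_{a,b})$. The sum is finite, since $h^{(q)}_m=0$ for $m<0$, and negative indices are harmless.

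The only delicate point is justifying that the partial order on $\Lambda[[X,Y]]$ is compatible with products of nonnegative series and with $F^q$. This reduces to the fact that the cone of Laurent polynomials with nonnegative coefficients is closed under multiplication and under $z_i\mapsto z_i^q$. One also needs to note that $\bW$ involves only nonnegative powers of $X$ and $Y$, as observed after \eqref{eq:def-Guv}, so all products are well defined.
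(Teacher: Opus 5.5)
Your strategy is the same as the paper's. You iterate \eqref{eq:W-geq-hhFW} using $\bh^{(pq)}(t)=\bh^{(q)}(t)\cdot F^q(\bh^{(p)}(t))$ to get $\bW\geq \bh^{(q)}(XY)\,\bh^{(q)}(Y)\,F^q(\bW)$, then compare coefficients of $X^dY^{d+e}$. Your induction step and order-compatibility remarks are fine.

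However, your description of $\bW$ is wrong: it is not true that $\bW(X,Y)=\sum_{d\geq 0,\,e\geq -1}\gamma_{d,e}X^dY^{d+e}$. Multiplying $\bG$ by $1/(1-X)$ raises the $X$-exponent and leaves the $Y$-exponent unchanged. So a term in position $(d,e)$ is spread to positions $(d+f,e-f)$, and $\bW$ has coefficients $w_{d,e}$ with $e<-1$ that are typically nonzero. For instance, $h^1(\mc{R}(-1))=h^0(\mc{O}_{\PP})=1$ contributes $X^1Y^0$ to $\bG$. It therefore contributes $X^2Y^0$ to $\bW$, which is position $(d,e)=(2,-2)$. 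The coefficient of $X^dY^{d+e}$ in $\bW$ equals $\gamma_{d,e}$ only for $d\geq 0$, $e\geq -1$. That is enough for the left-hand side, but not for your expansion of $F^q(\bW)$.

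As a result, your expansion omits the terms $F^q(w_{a,b})X^{aq}Y^{(a+b)q}$ with $b<-1$ and $a+b\geq 0$. The coefficient of $X^dY^{d+e}$ on the right-hand side is really $\sum_{a,b}h^{(q)}_{d-aq}\,h^{(q)}_{e-bq}\,F^q(w_{a,b})$, not \emph{exactly} the sum in the statement.

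The fix takes one line, and it is precisely what the paper does. All $w_{a,b}$ are $\geq 0$, because $\bG\geq 0$ and $1/(1-X)$ has nonnegative coefficients. The $h^{(q)}_m$ are also nonnegative. Discarding the terms with $b<-1$ can therefore only decrease the right-hand side, which gives the stated inequality rather than an equality. With this correction your argument is complete.
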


\begin{proof} Using the identity $\bh^{(pq)}(t)=\bh^{(q)}(t)\cdot F^q(\bh^{(p)}(t))$, it follows from \eqref{eq:W-geq-hhFW} by induction on $q$ that
\begin{equation}\label{eq:W-geq-hqhqFqW}
\bW(X,Y) \geq \bh^{(q)}(XY)\cdot \bh^{(q)}(Y)\cdot F^q(\bW(X,Y)).
\end{equation}
Writing $w_{d,e}$ for the coefficient of $X^d\cdot Y^{d+e}$ in $\bW(X,Y)$, it follows from \eqref{eq:def-Wuv} that $w_{d,e} \geq 0$, and as noted before $w_{d,e}=\gamma_{d,e}$ for $d\geq 0$ and $e\geq -1$. It follows from \eqref{eq:W-geq-hqhqFqW} that for $d\geq 0$, $e\geq -1$ we have
\[\gamma_{d,e}\geq \sum_{a,b}h^{(q)}_{d-aq}\cdot h^{(q)}_{e-bq}\cdot F^q\left(w_{a,b}\right) \geq \sum_{a\geq 0,\ b\geq -1}h^{(q)}_{d-aq}\cdot h^{(q)}_{e-bq}\cdot F^q\left(\gamma_{a,b}\right).    \qedhere\]
\end{proof}

To prove an upper bound for the support of cohomology characters we will use \cite{KMRR}*{Theorem~1.5}. For $q=p^k$ we define the
\defi{truncated Schur polynomials}
\begin{equation}\label{eq:def-trunc-sab} s^{(q)}_{(a,b)} = h^{(q)}_a\cdot h^{(q)}_b-h^{(q)}_{a+1}\cdot h^{(q)}_{b-1},
\end{equation}
and let
\begin{equation}\label{eq:def-Phi-de} 
\Phi_{d,e} = \sum_{j\geq 0} s^{(p)}_{(e+jp,d-jp)} \leq \sum_{j\geq 0} h^{(p)}_{e+jp}\cdot h^{(p)}_{d-jp}.
\end{equation}
We define
\begin{equation}\label{eq:def-W-de}
\mc{W}(d,e) = 
\left\{
 (q,a,b,u,v):
 \begin{array}{l}
 q=p^k,\ k\geq 1,\ a\geq0,\ -1\leq b\leq a-2,\
 u,v\geq 0\\ u\geq e-bq,\
 u+v+q(a+b)=d+e
 \end{array}
 \right\} 
\end{equation}
and observe that for $(q,a,b,u,v)\in\mc{W}(d,e)$ we must have $v\leq d-aq$.

\begin{lemma}\label{lem:upbd-H1DdRe}
    If $e\geq d-1$ then for every weight $\ul{w}$ in $h^1(D^d\mc{R}(e))$ there exists a tuple $(q,a,b,u,v)\in\mc{W}(d,e)$ such that $\ul{w}$ is a weight in $h^{(q)}_u\cdot h^{(q)}_v \cdot F^q\left(h^1(D^a\mc{R}(b))\right)$.
\end{lemma}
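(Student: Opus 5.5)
The plan is to argue by induction on $d$ (equivalently on $d+e$), using \cite{KMRR}*{Theorem~1.5} as a one-step upper bound. I expect that result to say the following. Every weight of $h^1(D^d\mc{R}(e))$ either occurs in the correction character $\Phi_{d,e}$ of \eqref{eq:def-Phi-de}, or occurs in a product
\[h^{(p)}_u\cdot h^{(p)}_v\cdot F^p\left(h^1(D^a\mc{R}(b))\right),\qquad u+v+p(a+b)=d+e,\ u\geq e-bp.\]
This is the upper-bound counterpart of \eqref{eq:fun-eqn-WXY}. Given this, the proof splits according to which of the two kinds of terms a weight $\ul{w}$ comes from. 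In each case I will produce a tuple in $\mc{W}(d,e)$, the main point being the constraint $b\leq a-2$.

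\emph{The $\Phi$-terms.} By \eqref{eq:def-Phi-de}, a weight of the $j$-th summand of $\Phi_{d,e}$ with $j\geq 1$ is a weight of $h^{(p)}_{e+jp}\cdot h^{(p)}_{d-jp}$. By \eqref{eq:identities-hiDdre} we have $h^1(D^1\mc{R}(-1))=h^0(D^0\mc{R}(0))=1$, so $F^p(h^1(D^1\mc{R}(-1)))=1$. Hence such a weight is covered by the tuple $(p,1,-1,e+jp,d-jp)$. This tuple satisfies $-1\leq b=a-2$, $u=e+jp\geq e+p=e-bq$, and $u+v=d+e$, so it lies in $\mc{W}(d,e)$. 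The summand $j=0$ is $s^{(p)}_{(e,d)}$ with $e\geq d-1$, and it does not fit this pattern. I must show it contributes no weights beyond those already accounted for. One way is to note that it is cancelled by the $j=0$ part of the characteristic-zero piece $s_{(e,d)}$ of \eqref{eq:identities-hiDdre}. Alternatively, I would check directly from the statement of \cite{KMRR}*{Theorem~1.5} that the hypothesis $e\geq d-1$ removes this term. I expect this bookkeeping, namely matching the precise form of \cite{KMRR}*{Theorem~1.5} with the index conditions in $\mc{W}(d,e)$, to be the main obstacle.

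\emph{The recursive terms.} Suppose next that $\ul{w}$ is a weight of $h^{(p)}_u h^{(p)}_v F^p(h^1(D^a\mc{R}(b)))$. If $b\leq a-2$, then $(p,a,b,u,v)\in\mc{W}(d,e)$ and we are done. If $b\geq a-1$, then $a<d$, and by induction every weight of $h^1(D^a\mc{R}(b))$ lies in $h^{(q')}_{u'}h^{(q')}_{v'}F^{q'}(h^1(D^{a'}\mc{R}(b')))$ for some $(q',a',b',u',v')\in\mc{W}(a,b)$. Substituting, $\ul{w}$ is a weight of
\[h^{(p)}_u F^p(h^{(q')}_{u'})\cdot h^{(p)}_v F^p(h^{(q')}_{v'})\cdot F^{pq'}\left(h^1(D^{a'}\mc{R}(b'))\right).\]
By the identity $\bh^{(pq')}(t)=\bh^{(p)}(t)\cdot F^p(\bh^{(q')}(t))$, the product $h^{(p)}_uF^p(h^{(q')}_{u'})$ is bounded above by $h^{(pq')}_{u+pu'}$, and similarly for $v$. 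So the candidate tuple is $(pq',a',b',u+pu',v+pv')$.

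It remains to verify the membership conditions for this tuple, each of which is linear. The bound $b'\leq a'-2$ is inherited from $\mc{W}(a,b)$. For the sum,
\[(u+pu')+(v+pv')+pq'(a'+b') = u+v+p(a+b)=d+e.\]
For the inequality on $u$, combining $u\geq e-bp$ with $u'\geq b-b'q'$ gives
\[u+pu'\geq e-bp+p(b-b'q') = e-b'(pq'),\]
as required.
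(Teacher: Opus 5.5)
\textbf{Gap: the $j=0$ term you leave open.} Your recursive step is exactly the paper's argument. The split into $b\le a-2$ versus $b\ge a-1$, the bound $h^{(p)}_u\cdot F^p(h^{(q')}_{u'})\le h^{(pq')}_{u+pu'}$, and the checks $u+pu'\ge e-b'(pq')$ and $(u+pu')+(v+pv')+pq'(a'+b')=d+e$ all match. Your induction on $d$ is also fine, because $v\ge 0$ and $u\ge e-bp$ force $pa\le d$.

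The gap is in what you take from \cite{KMRR}*{Theorem~1.5}. You posit a standalone term $\Phi_{d,e}$ and leave its $j=0$ summand $s^{(p)}_{(e,d)}$ unresolved. Within your version of the formula this is not bookkeeping: those weights cannot be assigned to any tuple of $\mc{W}(d,e)$. Indeed $0\le v\le d-aq$ with $a\ge b+2\ge 1$ forces $d\ge q\ge p$, so $\mc{W}(d,e)=\emptyset$ whenever $d<p$. Yet $s^{(p)}_{(e,d)}\neq 0$ for such $d$ and suitable $e\ge d$; for example $s^{(p)}_{(1,0)}=h^{(p)}_1$. Neither of your suggested remedies (cancellation against $s_{(e,d)}$, or ``$e\ge d-1$ removes it'') is carried out, so the proof as proposed does not close.

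\textbf{The fix: the correct form of the cited theorem.} The paper uses
\[h^1(D^d\mc{R}(e)) = \sum_{A=0}^{\lfloor d/p\rfloor}\sum_{B=-1}^{\lfloor (d+e)/p\rfloor}\Phi_{d-Ap,e-Bp}\cdot F^p\left(h^1(D^A\mc{R}(B))\right).\]
There is no standalone term. The summands with $A=0$ vanish, since $h^1(\mc{O}_{\PP}(B))=0$ for $B\ge -1$. Since $h^1(D^1\mc{R}(-1))=1$, the summand with $(A,B)=(1,-1)$ is $\Phi_{d-p,e+p}=\sum_{j\ge 1}s^{(p)}_{(e+jp,d-jp)}$. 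This is precisely your ``$j\ge 1$ part'', and it is already an instance of the case $b\le a-2$ with $q=p$. The $j=0$ piece never occurs.

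Now bound $\Phi_{d-Ap,e-Bp}\le\sum_{D+E=d+e-p(A+B),\ E\ge e-Bp}h^{(p)}_E\cdot h^{(p)}_D$ and use $F^p(h^1(D^A\mc{R}(B)))\ge 0$. Then every weight of $h^1(D^d\mc{R}(e))$ lies in some product $h^{(p)}_E h^{(p)}_D F^p(h^1(D^A\mc{R}(B)))$ with $E\ge e-Bp$. Your recursive case analysis is then the whole proof. This is how the paper argues, inducting on $d+e$ via $A+B<d+e$.
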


\begin{proof}
We prove the assertion by induction on $d+e$. We consider a weight $\ul{w}$ in $h^1(D^d\mc{R}(e))$, and in particular assume that $h^1(D^d\mc{R}(e))\neq 0$. If $d+e\leq 0$ then the condition $e\geq d-1$ forces either $d<0$ and $D^d\mc{R}=0$, or $d=0$, in which case $h^1(D^d\mc{R}(e))= 0$ for all $e\geq -1$. We may therefore assume that $d+e>0$.

Using \cite{KMRR}*{Theorem~1.5}, we have
\[ h^1(D^d\mc{R}(e)) = \sum_{A=0}^{\lfloor \frac{d}{p}\rfloor}\sum_{B=-1}^{\lfloor \frac{d+e}{p}\rfloor} \Phi_{d-Ap,e-Bp}\cdot F^p\left(h^1(D^A\mc{R}(B))\right),\]
and we note that \eqref{eq:def-Phi-de} implies 
\[\Phi_{d-Ap,e-Bp} \leq \sum_{\substack{D+E=d+e-p(A+B) \\ E\geq e-Bp}} h^{(p)}_E\cdot h^{(p)}_D.\]
It follows that
\begin{equation}\label{eq:1st-upbd-h1DdRe} h^1(D^d\mc{R}(e)) \leq \sum_{A=0}^{\lfloor \frac{d}{p}\rfloor}\sum_{B=-1}^{\lfloor \frac{d+e}{p}\rfloor}\sum_{\substack{D+E=d+e-p(A+B) \\ E\geq e-Bp}} h^{(p)}_E\cdot h^{(p)}_D \cdot F^p\left(h^1(D^A\mc{R}(B))\right),
\end{equation}
so $\ul{w}$ is a weight in one of the non-zero summands in \eqref{eq:1st-upbd-h1DdRe}. We analyze such summands according to the relation between $A$ and $B$. We note that they can only occur when $D,E\geq 0$, which combined with $d+e>0$ implies that $d+e>(d+e)/p\geq A+B$.

If $B\leq A-2$ then we let $q=p$, $a=A$, $b=B$, $u=E$ and $v=D$, to obtain a tuple $(q,a,b,u,v)\in\mc{W}(d,e)$ which satisfies the desired conclusion.

If $B\geq A-1$, then we can write \[\ul{w}=\ul{r}'+p\cdot\ul{w}',\text{ where $\ul{r}'$ is a weight in $h^{(p)}_E\cdot h^{(p)}_D$ and $\ul{w}'$ is a weight in $h^1(D^A\mc{R}(B))$.}\]
Since $A+B<d+e$, it follows by induction that we can find $(q_0,a,b,u_0,v_0)\in\mc{W}(A,B)$ such that $\ul{w}'$ is a weight in $h^{(q_0)}_{u_0}\cdot h^{(q_0)}_{v_0} \cdot F^{q_0}\left(h^1(D^a\mc{R}(b))\right)$. We can then decompose further
\[\ul{w}'=\ul{r}''+q_0\cdot\ul{w}'',\text{ where $\ul{r}''$ is a weight in $h^{(q_0)}_{u_0}\cdot h^{(q_0)}_{v_0}$ and $\ul{w}''$ is a weight in $h^1(D^a\mc{R}(b))$.}\]
We set $q=pq_0$, $u=pu_0+E$, $v=pv_0+D$, and note that $(q,a,b,u,v)\in\mc{W}(d,e)$ since $E\geq e-Bp$. Moreover,
\[ h^{(p)}_E\cdot F^p(h^{(q_0)}_{u_0}) \leq h^{(q)}_u,\quad h^{(p)}_D\cdot F^p(h^{(q_0)}_{v_0})\leq h^{(q)}_v,\]
which implies that $\ul{r}'+p\cdot\ul{r}''$ is a weight in $h^{(q)}_u\cdot h^{(q)}_v$. It follows that $\ul{w}=(\ul{r}'+p\cdot\ul{r}'')+q\cdot\ul{w}''$ is a weight in $h^{(q)}_u\cdot h^{(q)}_v \cdot F^q\left(h^1(D^a\mc{R}(b))\right)$, concluding the proof.
\end{proof}

\begin{proposition}\label{prop:bound-gap}
    Recalling notation \eqref{eq:gap-character}, we have for $e\geq d-1$ and $i=0,1$, that
    \[g\left(h^i(D^d\mc{R}(e))\right)\leq e-d.\]
\end{proposition}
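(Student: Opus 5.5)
Since $h^0(D^d\mc{R}(e)) = h^1(D^d\mc{R}(e)) + s_{(e,d)}$ by \eqref{eq:identities-hiDdre}, and $g(s_{(e,d)}) = e-d$ for $e\geq d\geq 0$ by \eqref{eq:gap-sab} (while $s_{(d-1,d)}=0$ for $e=d-1$), the weights of $h^0$ are contained in the union of the weights of $h^1$ and of $s_{(e,d)}$. Here we use that $h^1\geq 0$ and that $s_{(e,d)}$ has nonnegative coefficients when $e\geq d$. Hence it suffices to prove the bound for $i=1$, and we do this by induction on $d+e$ using Lemma~\ref{lem:upbd-H1DdRe}.

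Let $\ul{w}$ be a weight in $h^1(D^d\mc{R}(e))$ with $e\geq d-1$. By Lemma~\ref{lem:upbd-H1DdRe} there is $(q,a,b,u,v)\in\mc{W}(d,e)$ with $\ul{w}=\ul{x}+\ul{y}+q\ul{w}''$, where $\ul{x}$ is a weight in $h^{(q)}_u$, $\ul{y}$ is a weight in $h^{(q)}_v$, and $\ul{w}''$ is a weight in $h^1(D^a\mc{R}(b))$. The key point is that $b\leq a-2$, so $(a,b)$ lies outside the range $e\geq d-1$. We therefore rewrite via Serre-type duality \eqref{eq:identities-hiDdre}: $h^1(D^a\mc{R}(b)) = h^0(D^{b+1}\mc{R}(a-1))$ for $a\geq 0$, $b\geq -1$. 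Now $(d',e')=(b+1,a-1)$ satisfies $e'-d' = a-b-2\geq 0$ and $d'+e'=a+b<d+e$. The strict inequality holds because $a+b\leq (d+e)/q$ and $d+e>0$ whenever $h^1\neq0$. So the induction hypothesis applies to $h^0$, using the reduction from the first paragraph, and gives $g(\ul{w}'')\leq a-b-2$, i.e. $2\max(w''_i)\leq |\ul{w}''|+a-b-2$, with $|\ul{w}''|=a+b$ (the weight of $h^1(D^a\mc{R}(b))$ has total degree $a+b$). Hence $\max w''_i\leq a-1$.

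Now estimate directly. For each $i$ we have $w_i\leq x_i+y_i+qw''_i\leq x_i+(q-1)+q(a-1)$, and also $x_i\leq u$. So $\max w_i\leq \min(u,\ldots)+\ldots$; more cleanly, $w_i\leq x_i+y_i+q(a-1)$ with $y_i\leq \min(v,q-1)$. Using $u+v+q(a+b)=d+e$ and $|\ul{w}|=d+e$:
\[2w_i-(d+e) \leq 2x_i+2y_i+2q(a-1)-u-v-q(a+b)\leq u+v+q(a-b-2)\ \text{(roughly)},\]
and we want this to be at most $e-d$. The cleaner route is to note that $\max w_i\leq u+(q-1)+q(a-1)$ via $x_i\leq u$, $y_i\leq q-1$, and to use $v\leq d-aq$. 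Then
\[2\max w_i-(d+e)\leq 2u+2q a-2-u-v-qa-qb = u-v+q(a-b)-2.\]
Since $u-v = d+e-q(a+b)-2v$ and $v\leq d-aq$, we get $u-v+q(a-b)-2 = d+e-2qb-2v-2\geq \ldots$ — the inequality runs the wrong way, so instead we substitute $u = d+e-q(a+b)-v$ and bound $v$ from below using $v\geq 0$: this gives $e-d+\bigl(2d-2v-2qb\bigr)-2$, which needs a finer combination of $u\geq e-bq$ and $v\leq d-aq$.

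This final bookkeeping is the main obstacle. One has to choose the right per-coordinate bound, either $x_i\leq \min(u,q-1)$ together with $u\geq e-bq$, or a case split according to whether $\max w_i$ is attained where $w''_i=a-1$. The aim is to land exactly on $e-d$. If the crude bounds fail, we would instead prove the stronger statement that the weights lie in $\sum h^{(q)}_u h^{(q)}_v F^q(s_{(a-1,b+1)})$-type products, and compute their gap via Corollary~\ref{cor:terms-in-hqu-hqv}.
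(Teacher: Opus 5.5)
Your reduction to $i=1$, the use of Lemma~\ref{lem:upbd-H1DdRe}, the rewriting $h^1(D^a\mc{R}(b))=h^0(D^{b+1}\mc{R}(a-1))$, the check that $a+b<d+e$, and the inductive conclusion $\max(w''_i)\leq a-1$ all match the paper and are correct. The proof nevertheless has a genuine gap, because it stops exactly where the bound has to be extracted. You try $x_i\leq u$ and correctly notice that it goes the wrong way. That estimate would need $d\leq qb+v+1$, but $v\leq d-aq$ and $b\leq a-2$ give $qb+v+1\leq d-2q+1<d$. You then leave the final inequality as an unresolved ``obstacle'' with a vague fallback. As written, the proposition is not proved.

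The missing step is to choose the per-coordinate bounds the other way round. Use the truncation bound on the $u$-factor and the degree bound on the $v$-factor, i.e.\ $x_i\leq q-1$ and $y_i\leq v$. Combine this with $v\leq d-aq$, which holds for every tuple in $\mc{W}(d,e)$. Then
\[ w_i = x_i+y_i+q\,w''_i \leq (q-1)+v+q(a-1) = aq+v-1 \leq d-1, \]
so $g(\ul{w})\leq 2(d-1)-(d+e)=d-e-2\leq e-d$. The last inequality is exactly the hypothesis $e\geq d-1$. The lower bound $u\geq e-bq$ that you mention plays no role here.

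The paper phrases this through subadditivity of $g$ and splits into two cases. For $v\geq q$ it uses $y_i\leq q-1$ and $d\geq(a+1)q$, which gives the bound $d-e-4$. For $v\leq q-1$ it gets the bound $d-e-2$. Once your setup is completed with $y_i\leq\min(v,q-1)\leq v$, it handles both cases at once.
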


\begin{proof}
We argue by induction on $d+e$. We have using \eqref{eq:identities-hiDdre} that 
\[h^0(D^d\mc{R}(e)) = s_{(e,d)} + h^1(D^d\mc{R}(e)),\]
and since $g(s_{(e,d)})\leq e-d$ by \eqref{eq:gap-sab} and the convention $g(s_{(d-1,d)})=g(0)=-\infty$, it suffices to prove the inductive step for $i=1$. Using Lemma~\ref{lem:upbd-H1DdRe}, it suffices to show that each non-zero character $h^{(q)}_u\cdot h^{(q)}_v \cdot F^q\left(h^1(D^a\mc{R}(b))\right)$ with $(q,a,b,u,v)\in\mc{W}(d,e)$ has gap at most $e-d$. As in the proof of Lemma~\ref{lem:upbd-H1DdRe}, we may assume that $d+e>0$, which in turn implies $d+e>a+b$. 

The condition $b\leq a-2$ implies
\[h^1(D^a\mc{R}(b))=h^0(D^{b+1}\mc{R}(a-1)).\]
Since $a-1\geq (b+1)-1$ and $(b+1)+(a-1)<d+e$, it follows by induction that $h^0(D^{b+1}\mc{R}(a-1))$ has gap at most $a-b-2$, and therefore
\[g\left(F^q\left(h^1(D^a\mc{R}(b))\right)\right) \leq q(a-b-2).\]

Suppose first that $v\geq q$. We have
\[ g\left(h^{(q)}_u\cdot h^{(q)}_v\right)\leq 2(2q-2)-(u+v)=(4q-4)-(d+e)+q(a+b).\]
It follows that the summand $h^{(q)}_u\cdot h^{(q)}_v \cdot F^q\left(h^1(D^a\mc{R}(b))\right)$ has gap bounded above by
\[q(a-b-2)+(4q-4)-(d+e)+q(a+b) = 2aq+2q-4-(d+e). \]
Recall that $d-aq\geq v\geq q$, hence $d\geq(a+1)q$. It follows that
\[2aq+2q-4-(d+e)\leq 2d-4-(d+e)=d-e-4 \leq e-d,\]
where the last inequality is equivalent to $e-d\geq -2$, which is true by assumption.

Suppose now that $v\leq q-1$. We have
\[ g\left(h^{(q)}_u\cdot h^{(q)}_v\right)\leq 2(q-1+v)-(u+v)=(2q-2)+2v-(d+e)+q(a+b).\]
It follows that the summand $h^{(q)}_u\cdot h^{(q)}_v \cdot F^q\left(h^1(D^a\mc{R}(b))\right)$ has gap bounded above by
\[q(a-b-2)+(2q-2)+2v-(d+e)+q(a+b)=2(aq+v)-2-(d+e)\leq 2d-2-(d+e)=d-e-2\leq e-d,\]
where the last inequality follows from $e-d\geq -1$, concluding our proof.
\end{proof}

\section{The characterization of WLP}
\label{sec:characterize-WLP}

The goal of this section is to prove Theorem~\ref{thm:new-WLP}, illustrate it with some concrete examples, and discuss the subsequent results highlighted in the Introduction.

\subsection{The proof of Theorem~\ref{thm:new-WLP}}
\label{subsec:proof-new-WLP}

We first explain why conditions~\eqref{eq:thetaq-leq-gap} are necessary. To that end, we suppose that \eqref{eq:thetaq-leq-gap} fails for some $q$-reduction $\ul{r}$ of $\ul{a}$, and our goal is to explain why WLP then fails for $\ul{a}$. If we write $g_+=g_+(\ul{a};\ul{r})$ then it follows from Corollary~\ref{cor:terms-in-hqu-hqv} that $\ul{r}$ is a weight in $h^{(q)}_u\cdot h^{(q)}_v$ for some $u,v$ satisfying
\[ u-v \geq (g_++2)q-1.\]
If we let $m_i = (a_i-r_i)/q$ so that $g_+=g_+(\ul{m})$ as in \eqref{eq:defgap-g+}, then Corollary~\ref{cor:g+m=mina-b} allows us to find $a,b$ with $a-1\geq b+1\geq 0$ such that $a-b=g_++2$ and $\ul{m}$ is a weight in $s_{(a-1,b+1)}$. It follows from \eqref{eq:identities-hiDdre} that
\[h^1(D^a\mc{R}(b)) = h^0(D^{b+1}\mc{R}(a-1)) = s_{(a-1,b+1)} + h^1(D^{b+1}\mc{R}(a-1)) \geq s_{(a-1,b+1)},\]
which then implies that $\ul{m}$ is a weight in $h^1(D^a\mc{R}(b))$.  

If we set $e = u+bq$, $d = v+aq$ then
\[e-d = (u-v) - (g_++2)q \geq -1,\text{ or equivalently }e\geq d-1.\]
Using $u=e-bq$, $v=d-aq$, this shows that
\[\ul{a} = \ul{r}+q\cdot\ul{m}\text{ is a weight in }h^{(q)}_{e-bq}\cdot h^{(q)}_{d-aq}\cdot F^q\left(h^1(D^a\mc{R}(b))\right).\]
Since $h^1(D^a\mc{R}(b))\leq \gamma_{a,b}$, Lemma~\ref{lem:bound-h1} implies that $\ul{a}$ is a weight in $\gamma_{d,e}$, hence WLP fails for $\ul{a}$ by \eqref{eq:WLP-from-gamma}.

To finish the proof of Theorem~\ref{thm:new-WLP}, we need to verify that conditions~\eqref{eq:thetaq-leq-gap} are also sufficient. We assume that WLP fails for $\ul{a}$ and proceed to find a reduction $\ul{r}$ for which \eqref{eq:thetaq-leq-gap} fails. By \eqref{eq:coh-characterization-WLP}, we can find $e\geq d-1$ such that $\ul{a}$ is a weight in $h^1(D^d\mc{R}(e))$. We recall the notation \eqref{eq:def-W-de} and apply Lemma~\ref{lem:upbd-H1DdRe} to find $(q,a,b,u,v)\in\mc{W}(d,e)$ such that $\ul{a}$ is a weight in $h^{(q)}_u\cdot h^{(q)}_v \cdot F^q\left(h^1(D^a\mc{R}(b))\right)$. It follows that we can write
 \[\ul{a} = \ul{r} + q\cdot\ul{m},\]
 where $\ul{r}$ is a weight in $h^{(q)}_u\cdot h^{(q)}_v$ and $\ul{m}$ is a weight in $h^1(D^a\mc{R}(b))=h^0(D^{b+1}\mc{R}(a-1))$. By construction we have $a_i\equiv r_i\ (\op{mod}\ q)$ and $r_i\leq a_i$, and the fact that $\ul{r}$ is a weight in $h^{(q)}_u\cdot h^{(q)}_v$ implies $0\leq r_i\leq 2q-2$, hence $\ul{r}$ is a $q$-reduction of $\ul{a}$.
 
 Since $\ul{m}$ is a weight in $h^0(D^{b+1}\mc{R}(a-1))$, it follows from Proposition~\ref{prop:bound-gap} that $\ul{m}$ satisfies $g(\ul{m})\leq a-b-2$. Since $|\ul{m}|=a+b$ has the same parity as $a-b-2$, we get $g_+(\ul{a};\ul{r})=g_+(\ul{m})\leq a-b-2$. It follows from Corollary~\ref{cor:terms-in-hqu-hqv} that 
 \[\sum_{i=1}^n\theta_q(r_i)\geq u-v \geq (e-d)+(a-b)q \geq -1+(g_+(\ul{a};\ul{r})+2)q,\]
 proving that \eqref{eq:thetaq-leq-gap} fails, as desired.

The fact that it suffices to consider only those $q$ that satisfy \eqref{eq:bounds-q-ineq-WLP} was already explained in the introduction.

\subsection{Socle degrees that guarantee the weak Lefschetz property}\label{subsec:WLP-from-socle} Our next goal is to verify Theorem~\ref{thm:WLP-from-socle}, which we do by taking advantage of the main cohomology vanishing result from \cite{gao-raicu}. 

\begin{proof}[Proof of Theorem~\ref{thm:WLP-from-socle}]
 We fix a socle degree $s$ and note that Theorem~\ref{thm:new-WLP} implies that WLP holds when $s\leq 2p-2$, because $p>(s+1)/2$, hence there is no prime power $q$ satisfying the inequality \eqref{eq:bounds-q-ineq-WLP}. We may therefore assume that $s\geq 2p-1$, we set $d=e=s/2$ if $s$ is even, and $d=(s+1)/2$, $e=(s-1)/2$ if $s$ is odd. By the discussion in Section~\ref{subsec:coh-WLP}, the assertion that WLP holds for all algebras $M_{\ul{a}}$ with $a_1+\cdots+a_n=s$ is equivalent to the cohomology vanishing
 \begin{equation}\label{eq:H1vanishing-smalld} H^1\left(\PP^{n-1},D^d\mc{R}(e)\right) = 0.\end{equation}
 The assumption $s\geq 2p-1$ implies $d\geq p$, and therefore we can find unique integers $t$ and $q=p^k$ with
 \[ 1\leq t< p\leq q,\quad tq\leq d < (t+1)q.\]
 Using \cite{gao-raicu}*{Theorem~1.3 and (3.14)}, the vanishing \eqref{eq:H1vanishing-smalld} is equivalent to
 \begin{equation}\label{eq:H1van-bound-e} e \geq (t+n-2)q-n+1.\end{equation}
 
 Suppose first that $s$ is even, so $e=d\leq(t+1)q-1$, in which case \eqref{eq:H1van-bound-e} implies $n-2\geq(n-3)q\geq 2(n-3)$, which can only hold if $n=3,4$. If $n=3$ then we get $(t+1)q-2\leq e\leq (t+1)q-1$, hence $s=d+e$ implies
 \[ s = (2t+2)q-4 \quad \text{ or }\quad s = (2t+2)q-2.\]
 If $n=4$ then the inequalities $(t+1)q-1\geq e\geq (t+2)q-3$ hold if and only if $q=2$, which in turn forces $p=2$, $t=1$, and $e=3$, hence $s=6$. This shows that conditions (2), (3) in Theorem~\ref{thm:WLP-from-socle} are necessary when $s$ is even. If instead $s$ is odd, then $e=d-1\leq(t+1)q-2$. Condition \eqref{eq:H1van-bound-e} implies $n-3\geq (n-3)q$ which can only hold when $n=3$, which then forces $e=(t+1)q-2$, and $s=2e+1=(2t+2)q-3$, proving that (2), (3) are necessary also when $s$ is odd. It is now clear that in both cases (2), (3), the condition \eqref{eq:H1van-bound-e} is satisfied, which shows that conditions (2), (3) are sufficient and concludes our proof.
\end{proof}

\subsection{Some special cases of WLP}
\label{subsec:elementary-WLP}

We next give some concrete illustrations of Theorem~\ref{thm:new-WLP}. The first one explains why WLP holds in the unbalanced case (see \cite{mig-mr}*{Proposition~5.2}, \cite{lund-nick}*{Theorem~2.5}).

\begin{lemma}\label{lem:gap-conditions-unbalanced}
    If $a_1\geq a_2+\cdots+a_n$ then conditions \eqref{eq:thetaq-leq-gap} hold for all $q$.
\end{lemma}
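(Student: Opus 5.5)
The plan is a direct numerical verification. I bound the left-hand side of \eqref{eq:thetaq-leq-gap} by isolating the contribution of the largest exponent $a_1$, and I use the hypothesis $a_1\geq a_2+\cdots+a_n$ to show that the gap $g(\ul{a};\ul{r})$ absorbs everything else. I would not use any cohomology here. The argument uses only the definitions of $\theta_q$, of $q$-reductions and of the gap.

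Fix $q=p^k$ and a $q$-reduction $\ul{r}$ of $\ul{a}$, and write $a_i=r_i+qm_i$. If some $r_i=2q-1$, then $\theta_q(r_i)=-\infty$ and \eqref{eq:thetaq-leq-gap} holds trivially. So I assume $0\leq r_i\leq 2q-2$ for all $i$. I will use two elementary facts about $\theta_q$ on $[0,2q-2]$:
\begin{itemize}
\item[(i)] $\theta_q(r)\leq r$. This is an equality for $r\leq q-1$; for $q\leq r\leq 2q-2$ we have $\theta_q(r)=2q-2-r\leq q-2<r$.
\item[(ii)] $\theta_q(r)+r\leq 2q-2$. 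For $r\leq q-1$ the sum is $2r\leq 2q-2$; for $q-1\leq r\leq 2q-2$ it equals $2q-2$.
\end{itemize}

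The key step is to bound the gap from below. Since $\max(m_i)\geq m_1$, we have
\[g(\ul{m})\geq 2m_1-|\ul{m}| = m_1-\sum_{i\geq 2}m_i.\]
Multiplying by $q$ and using $qm_i=a_i-r_i$ together with the hypothesis $a_1\geq\sum_{i\geq2}a_i$ gives
\[q\,g(\ul{m})\geq (a_1-r_1)-\sum_{i\geq 2}(a_i-r_i)\geq \sum_{i\geq 2}r_i - r_1.\]

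Now combine this with (i) for $i\geq 2$ and (ii) for $i=1$:
\[\sum_{i=1}^n\theta_q(r_i)\leq \theta_q(r_1)+\sum_{i\geq2}r_i\leq \theta_q(r_1)+r_1+q\,g(\ul{m})\leq (g(\ul{m})+2)q-2.\]
Finally, $g(\ul{m})\leq g_+(\ul{m})=g_+(\ul{a};\ul{r})$ by \eqref{eq:defgap-g+}, and this yields \eqref{eq:thetaq-leq-gap}.

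I don't expect a real obstacle. The only point needing care is the choice of which index to treat separately. The natural first attempt bounds every term by $\theta_q(r_i)\leq q-1$, and that is too weak. The bound must use $\theta_q(r_i)\leq r_i$ for the small parts and the combined bound (ii) for $r_1$, so that the residues cancel against the lower bound on $q\,g(\ul{m})$. I also need to check that nothing assumes $m_1$ is the maximal $m_i$; it does not, since only $\max(m_i)\geq m_1$ is used.
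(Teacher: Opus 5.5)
Your proof is correct and follows the paper's argument step for step. Like the paper, you bound $q\,g(\ul{m})$ below by $r_2+\cdots+r_n-r_1$ using $m_1-(m_2+\cdots+m_n)\leq g(\ul{m})$ together with the hypothesis, and then apply $\theta_q(x)\leq x$ for $i\geq 2$ and $x+\theta_q(x)\leq 2q-2$ for $i=1$. The differences are cosmetic: you treat $r_i=2q-1$ separately and pass from $g$ to $g_+$ at the end, whereas the paper folds the $-\infty$ case into $x+\theta_q(x)\leq 2q-2$ and works with $g_+$ throughout.
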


\begin{proof}
Consider a $q$-reduction $\ul{r}$ of $\ul{a}$ and let $\ul{m}$ such that $a_i = qm_i + r_i$ for all $i$. We have
\[ m_1 - (m_2+\cdots+m_n) \leq g(\ul{m}) \leq g_+(\ul{m})\]
so the hypothesis implies
\begin{equation}\label{eq:bound-g+mq}
\begin{aligned}
0 &\leq a_1-a_2-\cdots-a_n = (m_1-m_2-\cdots-m_n)q+(r_1-r_2-\cdots-r_n) \\
&\leq g_+(\ul{m})q+(r_1-r_2-\cdots-r_n).
\end{aligned}
\end{equation}
Notice that for all $x\geq 0$ we have $\theta_q(x)\leq x$, and moreover if $0\leq x\leq 2q-1$ then
\[ x + \theta_q(x) \leq 2q-2.\]
Indeed, if $x\leq q-1$ then $x + \theta_q(x)=2x\leq 2q-2$, if $q\leq x\leq 2q-2$ then $x+\theta_q(x)=2q-2$, while for $x=2q-1$ we have $x+\theta_q(x)=-\infty$. We obtain
\[\sum_{i=1}^n\theta_q(r_i) \leq (2q-2-r_1) + (r_2+\cdots+r_n) \leq 2q-2 + g_+(\ul{m})q,\]
where the last inequality follows from \eqref{eq:bound-g+mq}. Since $g_+(\ul{m})=g_+(\ul{a};\ul{r})$, this proves \eqref{eq:thetaq-leq-gap}, as desired.
\end{proof}

\begin{lemma}\label{lem:hooks}
    If $\ul{a}=(a,1^{n-1})$ then $M_{\ul{a}}$ has WLP if and only if one of the following holds:
    \begin{enumerate}
        \item $a\geq n-1$, or
        \item $a\leq 2p-1-n$.
    \end{enumerate}
    In particular,  $M_{\ul{a}}$ has WLP for all $n\leq p$.
\end{lemma}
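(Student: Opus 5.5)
We apply Theorem~\ref{thm:new-WLP} directly, treating sufficiency and necessity separately.

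\emph{Sufficiency.} If $a\geq n-1$, then $a_1\geq a_2+\cdots+a_n$. By Lemma~\ref{lem:gap-conditions-unbalanced} all conditions \eqref{eq:thetaq-leq-gap} hold, so WLP holds. If instead $a\leq 2p-1-n$, then the socle degree is $s=a+n-1\leq 2p-2$. Then no $q$ satisfies \eqref{eq:bounds-q-ineq-WLP}, so WLP holds by Theorem~\ref{thm:new-WLP} (equivalently, by Theorem~\ref{thm:WLP-from-socle}(1)).

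\emph{Necessity.} Assume $2p-1-n<a\leq n-2$. We exhibit a $p$-reduction violating \eqref{eq:thetaq-leq-gap} with $q=p$. Since $a_i=1<p$ for $i\geq 2$, every $p$-reduction has $r_i=1$ and $m_i=0$ for $i\geq 2$, with $\theta_p(1)=1$. So $\ul{m}=(m_1,0,\ldots,0)$ and $g_+(\ul{a};\ul{r})=g(\ul{m})=m_1$. In both parities this holds since $g(\ul{m})=m_1\geq 0$ and $m_1\equiv|\ul{m}|$. Condition \eqref{eq:thetaq-leq-gap} therefore reads
\[\theta_p(r_1)+n-1\leq (m_1+2)p-2,\]
and it fails exactly when $\theta_p(r_1)+n-1\geq (m_1+2)p-1$.

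There are two cases.

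\begin{enumerate}
\item If $a<p-1$, take $r_1=a$ and $m_1=0$. The failure condition becomes $a+n-1\geq 2p-1$, which is exactly our assumption $a>2p-1-n$.
\item If $a\geq p-1$, let $r_1$ be the unique integer in $[p-1,2p-2]$ congruent to $a$ modulo $p$. It is the least element of its residue class that is $\geq p-1$, so $r_1\leq a$, and hence $\ul{r}$ is a legitimate $p$-reduction with $m_1=(a-r_1)/p$. On $[p-1,2p-2]$ we have $\theta_p(r_1)+r_1=2p-2$. Using $n-1\geq a+1=m_1p+r_1+1$, we get
\[\theta_p(r_1)+n-1\geq \theta_p(r_1)+r_1+1+m_1p=(m_1+2)p-1,\]
so \eqref{eq:thetaq-leq-gap} fails.
\end{enumerate}

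In either case WLP fails, which proves the equivalence. The step requiring the most care is the choice of the reduction in the case $a\geq p-1$. Choosing $r_1$ in the window $[p-1,2p-2]$ is what makes $\theta_p(r_1)+r_1$ maximal, namely $2p-2$, while keeping $r_1\leq a$ so that $\ul{r}$ is admissible.

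\emph{The final claim.} Suppose $n\leq p$. If $a\geq n-1$, WLP holds by (1). Otherwise $a\leq n-2$, and $2n\leq 2p+1$ gives $n-2\leq 2p-1-n$, so $a\leq 2p-1-n$ and WLP holds by (2).
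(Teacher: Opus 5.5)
Your proof is correct and is essentially the paper's argument: sufficiency comes from Lemma~\ref{lem:gap-conditions-unbalanced} and the socle-degree bound $s\leq 2p-2$. Necessity comes from exactly the same $p$-reductions the paper uses: its choices $r_1=a$ for $a<p$, and $r_1=p+r$ or $r_1=p-1$ for $a\geq p$, are precisely your choice of $r_1$ (either $r_1=a$, or the representative of $a$ mod $p$ in $[p-1,2p-2]$). Your identity $\theta_p(r_1)+r_1=2p-2$ on that window simply merges the paper's two subcases for $a\geq p$ into a single computation.
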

\begin{proof} If $a\geq n-1$ then $M_{\ul{a}}$ has WLP by Lemma~\ref{lem:gap-conditions-unbalanced}, while for $a\leq 2p-1-n$ WLP holds by Theorem~\ref{thm:WLP-from-socle}(1), since the socle degree of $M_{\ul{a}}$ is $s=a+n-1\leq 2p-2$. It remains to show that WLP fails for $2p-n\leq a\leq n-2$.

If $a<p$ then $\ul{r}=\ul{a}$ is a $p$-reduction with $g_+(\ul{a};\ul{r})=0$ and
\[\sum_{i=1}^n\theta_p(r_i)=a+n-1>2p-2,\]
so \eqref{eq:thetaq-leq-gap} fails for $\ul{r}$. If instead $a\geq p$ then we can write $a=(g+1)p+r$ with $0\leq r\leq p-1$. For $r<p-1$ we consider the $p$-reduction $\ul{r}=(p+r,1^{n-1})$: it satisfies $g_+(\ul{a};\ul{r})=g$, and using that $a\leq n-2$ we obtain
\[\sum_{i=1}^n\theta_p(r_i)=2p-2-(p+r)+n-1\geq p-1-r+a=(g+2)p-1>(g+2)p-2,\]
so \eqref{eq:thetaq-leq-gap} fails for $\ul{r}$. If $r=p-1$ then we consider the $p$-reduction $\ul{r}=(r,1^{n-1})$: it satisfies $g_+(\ul{a};\ul{r})=g+1$ and
\[\sum_{i=1}^n\theta_p(r_i)=p-1+n-1\geq p+a=(g+3)p-1>(g+3)p-2.\] 
This shows that \eqref{eq:thetaq-leq-gap} fails for $\ul{r}$ and concludes our proof. 
\end{proof}

\begin{example}\label{ex:unbalanced-reductions-hook}
Suppose that $\ul{a} = ((g+1)p-1,1^{n-1})$ for $0\leq g\leq p-2$. We have by Lemma~\ref{lem:hooks} that WLP holds for $M_{\ul{a}}$ if and only if $n\leq(g+1)p$. To see directly how Theorem~\ref{thm:new-WLP} applies, we note that every $p$-reduction $\ul{r}$ of $\ul{a}$ must satisfy $r_i=a_i=1$ for $i\geq 2$, and $r_1\in\{p-1,2p-1\}$. Since $\theta_p(2p-1)=-\infty$, the only $p$-reduction for which \eqref{eq:thetaq-leq-gap} is non-trivial is $\ul{r}=(p-1,1^{n-1})$, and it satisfies $g_+(\ul{a};\ul{r}) = g$. It follows from \eqref{eq:thetaq-leq-gap} that
\[(p-1)+(n-1) = \sum_{i=1}^n\theta_p(r_i) \leq (g+2)p-2,\]
which is equivalent to $n\leq (g+1)p$. When $(g+1)p<n\leq p(2p-1-g)$, condition \eqref{eq:thetaq-leq-gap} holds for all $q\geq p^2$, so the unique $q$-reduction that exhibits the failure of WLP arises for $q=p$ and $\ul{r}=(p-1,1^{n-1})$. In particular, it is necessary to consider positive gaps $g_+(\ul{a};\ul{r})$ in \eqref{eq:thetaq-leq-gap}, which must be as large as $p-2$. Compare this with the next section, where we will show that for $p=2$ positive gaps may be ignored when characterizing~WLP.
\end{example}

\subsection{Weak Lefschetz property in characteristic $2$}\label{subsec:WLP-char2} 
We now specialize to the case when $\kk$ has characteristic $p=2$. As usual, $q$ denotes a power of the characteristic. We begin by recalling \cite{KMRR}*{Theorem~1.4}, which employs the \defi{Nim symmetric polynomials} $\mc{N}_m=\mc{N}_m(\ul{z})$, defined for $m\geq 0$ (using notation \eqref{eq:def-Nimsum}) by
\begin{equation}\label{eq:def-Nim-pol}
 \mc{N}_m = \sum_{\substack{i_1+\cdots+i_n=2m \\ i_1\oplus i_2\oplus\cdots\oplus i_n = 0}} z_1^{i_1} z_2^{i_2}\cdots z_n^{i_n}.
\end{equation}
If $d\geq 0$ and $e\geq d-1$, then with notation \eqref{eq:def-trunc-sab} and \eqref{eq:def-Nim-pol}, \cite{KMRR}*{Theorem~1.4} yields
\begin{equation}\label{eq:h1DdRe-char2}  h^1(D^d\mc{R}(e)) = \sum_{\substack{q=2^k\geq 2 \\ m,j\geq 0}} F^{2q}(\mc{N}_m)\cdot s^{(q)}_{(e-(2m-2j-1)q,d-(2m+2j+1)q)}.
\end{equation}

\begin{proof}[Proof of Theorem~\ref{thm:WLP-char-2}]
    Using the notation \eqref{eq:def-trunc-hd}, we have the identity
    \begin{equation}\label{eq:telescope-qschur} \sum_{j\geq 0} s^{(q)}_{(u+j,v-j)} = \sum_{j\geq 0} \left(h^{(q)}_{u+j}\cdot h^{(q)}_{v-j} - h^{(q)}_{u+j+1}\cdot h^{(q)}_{v-j-1}\right) = h^{(q)}_u\cdot h^{(q)}_v.
    \end{equation}
Recalling notation \eqref{eq:def-gamma}, it follows from \eqref{eq:telescope-qschur} and \eqref{eq:h1DdRe-char2} that for $e\geq d-1$ we have
\[ \gamma_{d,e} = \sum_{\substack{q=2^k\geq 2 \\ m,j\geq 0}} F^{2q}(\mc{N}_m)\cdot h^{(q)}_{e-(2m-2j-1)q}\cdot h^{(q)}_{d-(2m+2j+1)q}.\]
Writing $u=e-(2m-2j-1)q$, $v=d-(2m+2j+1)q$, we note that $e\geq d-1$ implies 
\[ u-v = e - d + (4j+2)q \geq 2q-1.\]
It follows from \eqref{eq:WLP-from-gamma} that $M_{\ul{a}}$ fails WLP if and only if
\[ \ul{a}\text{ is a weight in }F^{2q}(\mc{N}_m)\cdot h_u^{(q)}\cdot h_v^{(q)}\text{ for some }q=2^k\geq 2,\ u,v\geq 0,\text{ with }u-v\geq 2q-1.\]

Consider now a tuple $\ul{a}$ for which the condition above holds, and define $b_i,r_i$ such that
\begin{equation}\label{eq:def-bi-ri} a_i = 2qb_i + r_i,\ 0\leq r_i <2q\quad\text{ for }i=1,\cdots,n.
\end{equation}
It follows that $\ul{b}$ is a weight in $\mc{N}_m$, and $\ul{r}$ is a weight in $h_u^{(q)}\cdot h_v^{(q)}$. Since the Nim sum of $b_1,\cdots,b_n$ is zero, and the powers of $2$ that appear in the $2$-adic expansion of each $r_i$ are bounded by $q$, we get
\[ a_1\oplus \cdots \oplus a_n = r_1 \oplus \cdots \oplus r_n \leq 1 + 2 + 4  + \cdots + q = 2q-1 < 2q.\]
Moreover, applying Corollary~\ref{cor:terms-in-hqu-hqv} with $\Delta=2q-1$, we get
\[ \sum_{i=1}^n \theta_q(a_i) = \sum_{i=1}^n \theta_q(r_i) \geq 2q-1.\]
It follows that if $M_{\ul{a}}$ fails WLP, then condition \eqref{eq:WLP-cond-char2} must fail for some $q=2^k$.

Conversely, suppose that \eqref{eq:WLP-cond-char2} fails for some $q=2^k$, and define $b_i,r_i$ via \eqref{eq:def-bi-ri}. We have as before that $r_1\oplus\cdots\oplus r_n < 2q$, hence
\[ 2q > a_1 \oplus \cdots \oplus a_n = 2q(b_1\oplus\cdots\oplus b_n) + (r_1\oplus\cdots\oplus r_n),\]
which forces $b_1\oplus\cdots\oplus b_n=0$. Assuming without loss of generality that $b_1=\max(b_i)$, we get $b_1=b_2\oplus\cdots\oplus b_n\leq b_2+\cdots+b_n$, so $g(\ul{b})\leq 0$. It follows that $\ul{r}$ is an even balanced $q$-reduction of $\ul{a}$, hence $g_+(\ul{a};\ul{r}) = 0$. The failure of \eqref{eq:WLP-cond-char2} translates into
\[ 2q - 1 \leq \sum_{i=1}^n \theta_q(a_i) = \sum_{i=1}^n \theta_q(r_i), \]
which implies that \eqref{eq:thetaq-leq-gap} is not satisfied, and therefore $M_{\ul{a}}$ fails WLP by Theorem~\ref{thm:new-WLP}.
\end{proof}

\begin{example}\label{ex:WLP-n=6}
    Suppose $n=6$ and $\ul{a}=(10, 3, 3, 2, 1, 1)$. We have $\max(a_i)=10$ and
    \[ a_1\oplus a_2\oplus\cdots\oplus a_n = 8,\]
    so it suffices to consider $q=8$ and $q=16$. We have $\theta_8(10) = 4$ and in all other cases $\theta_q(a_i)=a_i$, hence
    \[ \sum_{i=1}^n \theta_q(a_i) = \begin{cases}
        14 & \text{if }q = 8,\\
        20 & \text{if }q=16,
    \end{cases}\]
    and in both cases the inequality \eqref{eq:WLP-cond-char2} holds. It follows that
    $\kk[T_1,T_2,T_3,T_4,T_5,T_6]/\langle T_1^{11}, T_2^4, T_3^4, T_4^3, T_5^2, T_6^2\rangle$
    satisfies WLP (since $\ul{a}$ is unbalanced, the same conclusion follows from Lemma~\ref{lem:gap-conditions-unbalanced}). 
    
    If we consider instead $\ul{a}=(9, 3, 2, 2, 2, 1)$ then $\max(a_i)=9$ and 
    \[ a_1\oplus a_2\oplus\cdots\oplus a_n = 9,\]
    so we again have to consider $q=8$ and $q=16$. We have
    \[ \sum_{i=1}^n \theta_q(a_i) = 15 > 2q-2 \quad\text{ for }q=8,\]
    hence WLP fails for 
    $\kk[T_1,T_2,T_3,T_4,T_5,T_6]/\langle T_1^{10}, T_2^4, T_3^3, T_4^3, T_5^3, T_6^2\rangle$.
\end{example}

\subsection{WLP under extending a degree sequence}
\label{subsec:extend-seq}

Lemma~\ref{lem:gap-conditions-unbalanced} shows that given any degree sequence $\ul{a}$ we can find $a_{n+1}$ such that the extended sequence $(a_1,\cdots,a_{n+1})$ satisfies WLP (any sufficiently large $a_{n+1}$ works). We show that for most sequences $\ul{a}$ we can also find $a_{n+1}$ such that the extended sequence fails WLP.

\begin{lemma}\label{lem:large-thetap}
    If $x\geq p\geq 3$ then 
    \[\max\left(\theta_p(x),\theta_p(x-p)\right) \geq \frac{p-1}{2}.\]
\end{lemma}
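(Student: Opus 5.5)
The plan is a direct case analysis on the residue of $x$ modulo $2p$, using only the definition \eqref{eq:def-theta-q} with $q=p$. Write $x = 2pm + r$ with $0\leq r\leq 2p-1$. Since $x\geq p$, the number $x-p$ is nonnegative, and its residue modulo $2p$ is $r-p$ if $r\geq p$, and $r+p$ if $r\leq p-1$. In the second case the quotient drops by one, which is allowed because $x\geq p$ forces $m\geq 1$ whenever $r\leq p-1$. In either case exactly one of the two residues lies in $[0,p-1]$ and the other lies in $[p,2p-1]$; call them $s$ and $s+p$ with $0\leq s\leq p-1$.

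We then evaluate $\theta_p$ on both residues. For the first residue, $\theta_p(s)=s$. For the second, $\theta_p(s+p)=2p-2-(s+p)=p-2-s$ when $s\leq p-2$, while $s=p-1$ gives the residue $2p-1$ and hence the value $-\infty$. This leaves two cases.
\begin{enumerate}
\item If $s=p-1$, then $\theta_p(s)=p-1\geq (p-1)/2$, and the claim holds.
\item If $s\leq p-2$, then $\max(s,\,p-2-s)\geq (p-2)/2$. Since $p\geq 3$ is odd, $p-2$ is odd, so the integer $\max(s,p-2-s)$ is at least $\lceil (p-2)/2\rceil=(p-1)/2$.
\end{enumerate}

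There is no real obstacle here. The only points needing care are the residue bookkeeping (checking that $x-p\geq 0$ has residue $r\pm p$) and the parity step that upgrades the bound $(p-2)/2$ to $(p-1)/2$. That parity step is exactly where the hypothesis that $p$ is odd, i.e.\ $p\geq 3$, is used.
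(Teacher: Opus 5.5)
Your proof is correct and matches the paper's argument. The paper also splits on the residue of $x$ modulo $2p$, handles $r\in\{p-1,2p-1\}$ directly (your case $s=p-1$), and otherwise uses $\theta_p(x)+\theta_p(x-p)=p-2$ with the oddness of $p$, which is your bound $\max(s,p-2-s)\geq (p-1)/2$. The only difference is cosmetic: $\theta_p$ is defined on all integers and is $2p$-periodic, so your bookkeeping about $x-p\geq 0$ and $m\geq 1$ is harmless but not needed.
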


\begin{proof} We write $x=2pm+r$ with $0\leq r<2p$. If $r=p-1$ then $\theta_p(x)=p-1$ and if $r=2p-1$ then $\theta_p(x-p)=p-1$, so the desired inequality holds in both cases. Otherwise, we have
\[\theta_p(x)+\theta_p(x-p) = p-2 = \frac{p-1}{2}+\frac{p-3}{2},\]
which implies the desired conclusion.    
\end{proof}

\begin{proof}[Proof of Theorem~\ref{thm:WLP-fail}] Without loss of generality, we may assume that for some $0\leq n'\leq n$ we have
\[ a_1,\cdots,a_{n'}\geq p,\quad\text{and}\quad p-1\geq a_{n'+1},\cdots,a_n\geq 1.\]
Using Lemma~\ref{lem:large-thetap}, we can find a $p$-reduction $\ul{r}$ of $\ul{a}$ with $\theta_p(r_i)\geq(p-1)/2$ for $i=1,\cdots,n'$. Since $p-1\geq r_i=a_i\geq 1$ for $i>n'$, it follows that
\[\sum_{i=1}^n\theta_p(r_i) \geq n'\cdot\frac{p-1}{2} + (n-n') = n + n'\cdot\frac{p-3}{2}\geq n\geq p.\]
We set $m_i=(a_i-r_i)/p$ for $i=1,\cdots,n$ and define
\[ m_{n+1} = m_1+\cdots+m_n\quad\text{and}\quad a_{n+1} = p\cdot m_{n+1}+(p-1).\]
If we set $r_{n+1}=p-1$ then we obtain a $p$-reduction $(r_1,\cdots,r_{n+1})$ of $(a_1,\cdots,a_{n+1})$ with
\[\sum_{i=1}^{n+1}\theta_p(r_i) \geq 2p-1.\]
Since by construction we have $g_+(m_1,\cdots,m_{n+1})=0$, this shows that conditions \eqref{eq:thetaq-leq-gap} are not satisfied for $(a_1,\cdots,a_{n+1})$ and therefore WLP fails for this degree sequence.
\end{proof}

We next explain why Theorem~\ref{thm:WLP-fail} also holds when $n\geq 3$ and $p=2$. For $q=2^k$, we get as in the proof of Lemma~\ref{lem:large-thetap} that for $x\geq q$
\[\max\left(\theta_q(x),\theta_q(x-q)\right) \geq \frac{q-2}{2},\text{ with equality if }x\in\left\{\frac{q-2}{2},\frac{3q-2}{2}\right\}\text{ (mod $2q$)}\]
We can then take $q=4$ and assume as in the proof of Theorem~\ref{thm:WLP-fail} that for some $0\leq n'\leq n$ we have
\[ a_1,\cdots,a_{n'}\geq 4,\quad\text{and}\quad 3\geq a_{n'+1},\cdots,a_n\geq 1.\]
We choose a reduction $\ul{r}$ with $\theta_4(r_i)\geq 1$ for $i=1,\cdots,n'$, $r_i=a_i$ for $i>n'$, and obtain
\begin{equation}\label{eq:4reds-geqn}
\sum_{i=1}^n\theta_4(r_i) \geq n' + (n-n') = n.
\end{equation}
If $n\geq 4$ or if $n=3$ and the above inequality is strict, then $\sum_{i=1}^n\theta_4(r_i)\geq 4$. We set $m_i=(a_i-r_i)/4$ for $i=1,\cdots,n$, let $m_{n+1}=m_1+\cdots+m_n$, $a_{n+1}=4m_{n+1}+3$ and choose $r_{n+1}=3$ to obtain $\sum_{i=1}^{n+1}\theta_4(r_i) \geq 7$, so the $4$-reduction $(r_1,\cdots,r_{n+1})$ of $(a_1,\cdots,a_{n+1})$ violates \eqref{eq:thetaq-leq-gap}. 

We are left to consider the case when $n=3$ and \eqref{eq:4reds-geqn} is an equality, which forces $a_i\in\{1,5\}\text{ (mod $8$)}$, and therefore $\theta_2(a_i)=1$ for all $i=1,2,3$. We set $r_i=1$ and $m_i=(a_i-r_i)/2$ for $i=1,\cdots,3$, so $\ul{r}=(r_1,r_2,r_3)$ is a $2$-reduction. We let $m_{4}=m_1+m_2+m_3$, $a_{4}=2m_{4}+1$ and choose $r_{4}=1$. We obtain an even balanced $2$-reduction $(r_1,r_2,r_3,r_4)$ of $(a_1,a_2,a_3,a_4)$, and
$\sum_{i=1}^{4}\theta_2(r_i) = 4 > 2=2p-2$,
which violates \eqref{eq:thetaq-leq-gap}.

\begin{example}\label{ex:pn=2-allWLP}
    To see why Theorem~\ref{thm:WLP-fail} fails for $n=p=2$, take $(a_1,a_2)=(1,2)$. We argue that WLP holds for $(1,2,a_3)$ for all $a_3\geq 0$. When $a_3\geq 3=a_1+a_2$, this follows from Lemma~\ref{lem:gap-conditions-unbalanced}. When $a_3\in\{1,2\}$, $M_{(a_1,a_2,a_3)}$ has socle degree $s\in\{4,5\}$, which satisfies the inequality in Theorem~\ref{thm:WLP-from-socle}(2) for $t=1$ and $q=2$, which implies WLP. Finally, if $a_3=0$ then $M_{(a_1,a_2,a_3)}=M_{(a_1,a_2)}$ and, as noted before, WLP always holds in $2$ variables.
\end{example}

\section{Classification of WLP for constant degree sequences}
\label{sec:WLP-constant-degs}

The goal of this section is to explain how our main results recover the classification of WLP from \cite{bre-kai} and 
\cite{kus-vra}, for the monomial complete intersections
\begin{equation}\label{eq:KT-mod-Td}
\kk[T_1,\ldots,T_n]/\langle T_1^{d},\ldots,T_n^{d}\rangle,\quad \text{with }d\geq 2.
\end{equation}
With notation \eqref{eq:def-Ma}, these are the algebras $M_{\ul{a}}$ where
\begin{equation}\label{eq:ai=d-1} 
a_1=\cdots=a_n=d-1,
\end{equation}
which we assume throughout this section. Since WLP holds for $n\leq 2$, we will also assume that $n\geq 3$.

\subsection{Possible $q$-reductions and a characterization of WLP}

We assume that $q=p^k\geq p$ and consider the possible $q$-reductions for the degree sequence $\ul{a}$ in \eqref{eq:ai=d-1}. If $d\leq q$ then there exists a unique $q$-reduction $\ul{r}$ of~$\ul{a}$, given by
\[ r_i = d-1\quad\text{ for }i=1,\cdots,n.\]
It satisfies $g_+(\ul{a};\ul{r})=0$ and we have the equivalence
\[\sum_{i=1}^n \theta_q(r_i) = n(d-1)\leq 2q-2 \quad\Longleftrightarrow\quad d\leq\frac{2q+n-2}{n}.\]
It follows from Theorem~\ref{thm:new-WLP} that
\begin{equation}\label{eq:Md-easyfail-WLP}
M_{\ul{a}}\text{ fails WLP if }\quad\frac{2q+n-1}{n}\leq d\leq q.
\end{equation}

If $d-1 \geq q$ then we write
\begin{equation}\label{eq:d-1=2mq+dbar} 
d-1 = 2mq+\ol{d},\quad\text{where }0\leq \ol{d}\leq 2q-1,
\end{equation}
and note that for each $q$-reduction $\ul{r}$ of $\ul{a}$ and each $i=1,\cdots,n$, we either have
\begin{equation}\label{eq:possible-aibar}
    r_i = \ol{d} \quad\text{or}\quad r_i = \begin{cases}
        \ol{d}+q & \text{if }0\leq\ol{d}\leq q-1, \\
        \ol{d}-q & \text{if }q\leq\ol{d}\leq 2q-1. \\
    \end{cases}
\end{equation}

Since $\theta_q(2q-1)=-\infty$, it follows that if $\ol{d}\in\{q-1,2q-1\}$ (or equivalently, if $q|d$) then there is only one choice of $q$-reduction that makes \eqref{eq:thetaq-leq-gap} non-trivial, namely
\begin{equation}\label{eq:qred-allq-1}
    r_1=\cdots=r_n=q-1.
\end{equation}
Writing $a_i = m_iq+r_i$, we get $m_i=2m$ when $\ol{d}=q-1$ and $m_i = 2m+1$ when $\ol{d}=2q-1$. In particular \eqref{eq:qred-allq-1} is a balanced reduction, and it is odd if and only if $n$ is odd and $\ol{d}=2q-1$. Equation \eqref{eq:thetaq-leq-gap} becomes
\[n(q-1)\leq 2q-2\quad\text{if $n$ is even or $\ol{d}=q-1$},\]
which contradicts our assumption $n\geq 3$, and
\[n(q-1)\leq 3q-2\quad\text{if $n$ is odd and $\ol{d}=2q-1$},\]
which holds only for $n=3$. It follows that (see also \cite{bre-kai}*{Corollary~2.9})
\begin{equation}\label{eq:WLP-p-divides-d}
\text{if $p|d$ then WLP may only hold when $n=3$ and $d=(2m+2)p$ is an even multiple of $p$}.
\end{equation}

If instead $\ol{d}\not\in\{q-1,2q-1\}$, then we have
\begin{equation}\label{eq:theta-dbar-pm-q}
\theta_q(\ol{d}+q) = \theta_q(\ol{d}-q) = q-2-\theta_q(\ol{d}).
\end{equation}

\begin{theorem}\label{thm:WLP-constant-ai}
    Fix $d\geq 2$ and for $q=p^k\geq p$ let
    \[\xi_q = \begin{cases}
        \theta_q(d-1)=d-1 & \text{if }d\leq q \\
        \max\{\theta_q(d-1),\theta_q(d-1-q)\} & \text{if }d>q.
    \end{cases}\]
    WLP holds for \eqref{eq:KT-mod-Td} if and only if for every $q=p^k\geq p$ we have
    \begin{equation}\label{eq:conds-xiq}
    (1)\ \xi_q \leq\frac{2q-2}{n},\quad\text{ or }\quad(2)\
        \xi_q=\theta_q(d-1-q),\text{ $n$ is odd, and }\xi_q \leq\frac{q}{n-2}.
    \end{equation}
\end{theorem}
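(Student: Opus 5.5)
We apply Theorem~\ref{thm:new-WLP} one prime power $q=p^k\geq p$ at a time; $q=1$ gives no condition. The claim to prove for each fixed $q$ is that \eqref{eq:thetaq-leq-gap} holds for every $q$-reduction of the constant sequence \eqref{eq:ai=d-1} exactly when \eqref{eq:conds-xiq} holds. The case $d\leq q$ is immediate. There the unique $q$-reduction is $\ul{r}=\ul{a}$, which is even and balanced, so \eqref{eq:thetaq-leq-gap} reads $n(d-1)\leq 2q-2$. This is condition (1) with $\xi_q=d-1$. Condition (2) is not available here, since it would need $\xi_q=\theta_q(d-1-q)$ and $d-1-q<0$ is not a legitimate residue. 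That edge case should be handled by reading $\xi_q=\theta_q(d-1)$ first.

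For $d>q$ we write $d-1=2mq+\ol{d}$ as in \eqref{eq:d-1=2mq+dbar}. By \eqref{eq:possible-aibar}, each $r_i$ takes one of two values: $\ol{d}$, or the alternative $\ol{d}\pm q$. The alternative is available only when $\ol{d}\pm q\leq a_i$, and this holds because $d-1\geq q$.

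\textbf{Parametrizing reductions.} Let $j$ be the number of indices using the alternative value. Then $\sum_i\theta_q(r_i)=(n-j)\theta_q(\ol{d})+j\,\theta_q(\ol{d}\pm q)$. Since the sequence is constant, the multiplicities $m_i$ take two values differing by $1$, namely $2m$ and $2m+1$ (or $2m-1$ and $2m$ in the case $\ol{d}\geq q$). With at most two distinct values among $n\geq3$ entries, $g(\ul{m})\leq 1$ when the top value occurs once, and $g(\ul{m})\leq 0$ otherwise. One checks in each case that the reduction is balanced. It follows that $g_+(\ul{a};\ul{r})$ equals $0$ if $|\ul{m}|$ is even and $1$ if it is odd. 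The parity of $|\ul{m}|$ equals $j \bmod 2$ when $\ol{d}\leq q-1$ (the alternative $\ol{d}+q$ drops $m_i$ by one), and $(n-j)\bmod 2$ when $\ol{d}\geq q$ (there the multiplicity $2m+1$ is attached to the residue $\ol{d}$).

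\textbf{Optimizing over $j$.} Since the right side of \eqref{eq:thetaq-leq-gap} takes only the two values $2q-2$ and $3q-2$, the binding constraints are these two:
\begin{itemize}
\item the all-even case, which forces $n\,\max(\theta_q(\ol{d}),\theta_q(\ol{d}\pm q))\leq 2q-2$ when the maximizing choice is uniform of even parity;
\item the odd case, which forces $n\xi\leq 3q-2$.
\end{itemize}
We also account for mixed choices of $j$. By \eqref{eq:theta-dbar-pm-q}, $\theta_q(\ol{d})+\theta_q(\ol{d}\pm q)=q-2$ when $\ol{d}\notin\{q-1,2q-1\}$; the cases $\ol{d}\in\{q-1,2q-1\}$ were already handled by \eqref{eq:qred-allq-1}, which is consistent with $\xi_q=q-1$. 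The constraint coming from an even reduction with $j$ taken one away from the uniform choice then reads $(n-1)\xi+(q-2-\xi)\leq 2q-2$, that is, $(n-2)\xi\leq q$. This is exactly where the bound $q/(n-2)$ in condition (2) comes from.

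\textbf{Main obstacle.} The step I expect to be hardest is the bookkeeping that matches the parity of $|\ul{m}|$ with the distinction between $\xi_q=\theta_q(d-1)$ and $\xi_q=\theta_q(d-1-q)$. The claim to verify is this: when the larger $\theta$-value is attained at the alternative residue $\ol{d}-q$ and $n$ is odd, the uniform reduction is odd. In that case the weaker bound $n\xi\leq 3q-2$ applies, together with the mixed bound $(n-2)\xi\leq q$. One checks that for $n\geq3$ the mixed bound implies the uniform one, leaving condition (2). In all other configurations the uniform maximizing reduction is even, and the requirement is condition (1), $n\xi\leq 2q-2$, which also dominates the mixed constraints. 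Note that $\theta_q(d-1)=\theta_q(\ol{d})$ while $\theta_q(d-1-q)$ is the alternative value, so this must be checked in both regimes $\ol{d}<q$ and $\ol{d}\geq q$. I would carry out this case split explicitly and compare with \eqref{eq:WLP-p-divides-d} as a sanity check.
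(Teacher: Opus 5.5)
Your overall plan matches the paper's: fix $q$, use that each $r_i\in\{\ol{d},\ \ol{d}\pm q\}$, note that $g_+(\ul{a};\ul{r})\in\{0,1\}$ is determined by the parity of $|\ul{m}|$, and identify two binding reductions. These are the uniform reduction realizing $\xi_q$ and, for $n$ odd, the one-off even reduction giving $(n-2)\xi_q\le q$.

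However, the parity bookkeeping, which you yourself single out as the crux, is wrong as written, and in a way that would produce the wrong theorem. From \eqref{eq:d-1=2mq+dbar} one gets $m_i=2m$ for $r_i=\ol{d}$ in both regimes, $m_i=2m-1$ for $r_i=\ol{d}+q$, and $m_i=2m+1$ for $r_i=\ol{d}-q$. So your two value sets are swapped, and the multiplicity $2m+1$ belongs to the alternative residue $\ol{d}-q$, not to $\ol{d}$. Hence $|\ul{m}|\equiv j\pmod 2$ in both regimes, not $n-j$ when $\ol{d}\geq q$.

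With your formula, for $n$ odd and $\ol{d}\ge q$ the all-alternative reduction would be even. That would force $n\xi_q\le 2q-2$, so condition (2) would not suffice. This contradicts both the claim in your last paragraph and the statement. Concretely, take $p=q=5$, $n=3$, $d=9$. Then $\ol{d}=8$ and $\xi_5=\theta_5(3)=3$. The reduction $\ul{r}=(3,3,3)$ has $\ul{m}=(1,1,1)$, which is odd, so \eqref{eq:thetaq-leq-gap} asks only $9\le 13$. Treating it as even would demand $9\le 8$ and wrongly predict failure of WLP for $(8,8,8)$. In fact WLP holds: only $q=5$ is relevant by \eqref{eq:bounds-q-ineq-WLP}, and the four types of $5$-reductions give sums $0,3,6,9$ against bounds $8,13,8,13$.

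With $|\ul{m}|\equiv j$ the argument closes as in the paper. The all-alternative reduction is odd exactly when $n$ is odd. For $n$ odd, every even reduction has an odd, hence positive, number of indices with $r_i=\ol{d}$. Therefore $\sum_i\theta_q(r_i)\le\theta_q(\ol{d})+(n-1)\xi_q\le(q-2)+(n-2)\xi_q$ by \eqref{eq:theta-dbar-pm-q}.

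A few smaller points also need fixing:
\begin{enumerate}
\item Your claim that ``one checks that the reduction is balanced'' is false for $m=0$, $\ol{d}\ge q$, $j=1$: there $\ul{m}=(1,0,\dots,0)$ has gap $1$. Your conclusion $g_+=1$ survives, since $g\le 1$.
\item For $\ol{d}=2q-1$ and $n$ odd, the one-off reduction contains $\theta_q(2q-1)=-\infty$ and gives no constraint. You must check directly that $n(q-1)\le 3q-2$ and $(n-2)(q-1)\le q$ each reduce to $(n-3)(q-1)\le 1$, i.e.\ to $n=3$.
\item Ties $\theta_q(\ol{d})=\theta_q(\ol{d}\pm q)=(q-2)/2$, which occur only for $p=2$, deserve a remark. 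There conditions (1) and (2) are equivalent, so it does not matter which ``uniform maximizing'' reduction you pick.
\end{enumerate}
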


\begin{proof} We fix $q=p^k$, let $\ul{a}$ as in \eqref{eq:ai=d-1}, and use notation \eqref{eq:d-1=2mq+dbar}. By Theorem~\ref{thm:new-WLP}, it suffices to prove that condition \eqref{eq:thetaq-leq-gap} holds for every $q$-reduction $\ul{r}$ of $\ul{a}$ if and only if \eqref{eq:conds-xiq} holds.

Suppose first that \eqref{eq:thetaq-leq-gap} holds. If $\xi_q=\theta_q(d-1)$ then we consider the $q$-reduction $r_i=\ol{d}$ which is even and balanced, so $g_+(\ul{a};\ul{r})=0$. The periodicity of the function $\theta_q$ implies $\xi_q=\theta_q(d-1)=\theta_q(\ol{d})$, so \eqref{eq:thetaq-leq-gap} yields
\[n\cdot\xi_q = \sum_{i=1}^n\theta_q(r_i) \leq \left(g_+(\ul{a};\ul{r})+2\right)q-2 = 2q-2,\]
proving condition (1) in \eqref{eq:conds-xiq}. We may therefore assume that $\xi_q=\theta_q(d-1-q)$ and $d>q$. If $n$ is even then taking $r_i = \ol{d}\pm q$ as in \eqref{eq:possible-aibar} we obtain again an even balanced reduction, hence $n\cdot\xi_q\leq 2q-2$ follows as before from \eqref{eq:thetaq-leq-gap}. If instead $n$ is odd then $\xi_q \leq\frac{q}{n-2}$ is trivial when $n=3$, so we may assume $n\geq 5$. It follows that $\ol{d}\not\in\{q-1,2q-1\}$, since otherwise the $q$-reduction \eqref{eq:qred-allq-1} would not satisfy \eqref{eq:thetaq-leq-gap}. We can then define an even balanced reduction by $r_1 = \ol{d}$ and $r_i = \ol{d}\pm q$ for $i>1$, and using \eqref{eq:theta-dbar-pm-q} we obtain
\[q-2+(n-2)\xi_q = (\theta_q(r_1)+\theta_q(r_2)) + \sum_{i=3}^n\theta_q(r_i) =  \sum_{i=1}^n\theta_q(r_i) \leq 2q-2,\]
which implies $\xi_q\leq q/(n-2)$, as desired.

We assume now that \eqref{eq:conds-xiq} holds, consider a $q$-reduction $\ul{r}$ of $\ul{a}$, and proceed to verifying \eqref{eq:thetaq-leq-gap}. Using \eqref{eq:possible-aibar} and the periodicity of the function $\theta_q$, it follows that $\theta_q(r_i)\leq \xi_q$. If $\xi_q\leq(2q-2)/n$ then
\[\sum_{i=1}^n\theta_q(r_i) \leq n\cdot\xi_q \leq 2q-2 \leq \left(g_+(\ul{a};\ul{r})+2\right)q-2,\]
as desired. Suppose instead that condition (2) in \eqref{eq:conds-xiq} holds. If $\ul{r}$ is an even reduction then using \eqref{eq:possible-aibar} and the fact that $n$ is odd, there exist an odd number of indices for which $r_i = \ol{d}$. Since $\theta_q(\ol{d})+\xi_q = \theta_q(\ol{d})+\theta_q(\ol{d}-q)$ is equal to either $-\infty$ or to $(q-2)$ by \eqref{eq:theta-dbar-pm-q}, we obtain
\[\sum_{i=1}^n\theta_q(r_i) \leq \theta_q(\ol{d}) + (n-1)\xi_q \leq (q-2)+(n-2)\xi_q \leq 2q-2 \leq \left(g_+(\ul{a};\ul{r})+2\right)q-2. \]
If $\ul{r}$ is an odd reduction then $g_+(\ul{a};\ul{r})\geq 1$ so \eqref{eq:thetaq-leq-gap} holds when $n=3$. We may therefore assume that $n\geq 4$, in which case we have
\[\sum_{i=1}^n\theta_q(r_i) \leq n\cdot\xi_q \leq \frac{n\cdot q}{n-2}\leq 3q-2,\]
where the last inequality is equivalent to $2(n-2)\leq q(2n-6)$, which holds because $q\geq 2$ and $2n-6\geq n-2$. Since $3\leq g_+(\ul{a};\ul{r})+2$, it follows that \eqref{eq:thetaq-leq-gap} holds, concluding our proof.
\end{proof}

\subsection{Classification of WLP in characteristic $p=2$.}
\label{subsec:WLP-char2-constantai}

If $\op{char}(\kk)=2$ and $n\geq 4$, it follows from \cite{kus-vra}*{Remark~5.2 and Theorem~6.3} that \eqref{eq:KT-mod-Td} fails WLP for all $d\geq 2$. To see this, consider $q=2^k\geq 2$ with
\[ \frac{q}{2}\leq d-1<q.\]
It follows that
\[\frac{2q+n-1}{n}< \frac{q}{2}+1\leq d\leq q,\]
hence WLP fails by \eqref{eq:Md-easyfail-WLP}. The only interesting case occurs then when $n=3$, where the classification of WLP was obtained by Brenner and Kaid.

\begin{proposition}[\cite{bre-kai}*{Corollary~2.7}]\label{prop:WLPddd}
    If $p=2$ and $n=3$, then WLP holds for \eqref{eq:KT-mod-Td} if and only if 
    \[d=\left\lfloor \frac{2^t+1}{3}\right\rfloor\quad\text{ for some }t.\]
\end{proposition}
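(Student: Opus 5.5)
The plan is to specialize Theorem~\ref{thm:WLP-char-2} to $n=3$ and $a_1=a_2=a_3=d-1$. Since there is an odd number of equal terms, the Nim-sum is $a_1\oplus a_2\oplus a_3=d-1$. So WLP holds if and only if
\[3\,\theta_q(d-1)\leq 2q-2\quad\text{for every }q=2^k\text{ with }2q>d-1.\]
Let $q_0=2^t$ be the smallest power of $2$ with $q_0>d-1$. Since $d\geq 2$, we have $q_0\geq 2$. Put $q_1=q_0/2$, so that $q_1\leq d-1<q_0=2q_1$. The admissible $q$ are then exactly $q_1$ together with all $q\geq q_0$.

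First I treat the range $q\geq q_0$. There $d-1<q$, so $\theta_q(d-1)=d-1$. The binding constraint is the smallest value $q=q_0$, which gives
\[d\leq \frac{2q_0+1}{3}.\]
Next I treat $q=q_1$, where $q_1\leq d-1\leq 2q_1-1$. If $d-1=2q_1-1$, i.e. $d=q_0$, then $\theta_{q_1}(d-1)=-\infty$ and the constraint is vacuous. In that case, however, the $q_0$-condition $q_0\leq(2q_0+1)/3$ fails for $q_0\geq 2$, so WLP fails. Otherwise $\theta_{q_1}(d-1)=2q_1-2-(d-1)=q_0-1-d$, and the condition $3(q_0-1-d)\leq q_0-2$ becomes
\[d\geq \frac{2q_0-1}{3}.\]
Hence WLP holds if and only if $(2q_0-1)/3\leq d\leq(2q_0+1)/3$, where $q_0=2^t$ is the least power of $2$ exceeding $d-1$.

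The interval $[(2^t-1)/3,(2^t+1)/3]$ has length $2/3$ and contains exactly one integer, namely $\lfloor(2^t+1)/3\rfloor$. This gives the "only if" direction immediately.

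For the converse, starting from $d=\lfloor(2^t+1)/3\rfloor\geq 2$, I must check that $2^t$ really is the least power of $2$ exceeding $d-1$. That is, I need
\[2^{t-1}\leq d-1<2^t.\]
The upper bound is clear. The lower bound follows from $d-1\geq(2^t-4)/3\geq 2^{t-1}$ once $2^t\geq 8$. The small values $d\in\{1,3\}$ (where $t\leq 3$) are handled directly: for $d=3$ we have $q_0=4$ and $d-1=2\geq 2$. The case $d=1$ is excluded by the standing assumption $d\geq 2$, and $d=2$ does not occur in the sequence.

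I expect the main obstacle to be this bookkeeping of which power $q_0$ is attached to $d$, together with the boundary case $d=q_0$ where $\theta$ equals $-\infty$. The rest is a direct substitution into Theorem~\ref{thm:WLP-char-2}.
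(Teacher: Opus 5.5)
Your reduction is the same as the paper's. You specialize Theorem~\ref{thm:WLP-char-2} and use $a_1\oplus a_2\oplus a_3=d-1$. Only $q_1$ (with $q_1\leq d-1<2q_1$) and $q_0=2q_1$ matter, giving that WLP holds if and only if $(2q_0-1)/3\leq d\leq(2q_0+1)/3$. Your separate treatment of $d=q_0$ is fine; the paper absorbs that case by noting $2q\geq(4q-1)/3$. Writing $d=\lfloor(2q_0+1)/3\rfloor$ then gives the ``only if'' direction, since $t$ is existential.

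The error is in the exponent bookkeeping. Your interval involves $2q_0$, so rewriting it as $[(2^t-1)/3,(2^t+1)/3]$ silently sets $2^t=2q_0$, i.e.\ $q_0=2^{t-1}$, while you still assert $q_0=2^t$. The converse then checks the wrong condition. You claim to need $2^{t-1}\leq d-1<2^t$ and justify the lower bound by $(2^t-4)/3\geq 2^{t-1}$ for $2^t\geq 8$. That inequality is equivalent to $2^t\leq -8$, so it is false for every $t$. Concretely, $d=5=\lfloor 17/3\rfloor$ has $t=4$, but $d-1=4<8=2^{t-1}$. Your own check for $d=3$ uses $q_0=4=2^{t-1}$ rather than $2^t=8$, which shows the inconsistency.

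The repair is short. For $d=\lfloor(2^t+1)/3\rfloor\geq 2$ you must show $q_0=2^{t-1}$, i.e.\ $2^{t-2}\leq d-1<2^{t-1}$. The upper bound follows from $d-1\leq(2^t-2)/3<2^{t-1}$. The lower bound follows from $d-1\geq(2^t-4)/3\geq 2^{t-2}$, which holds once $2^t\geq 16$. The only remaining value with $d\geq 2$ is $t=3$, $d=3$, where $d-1=2=2^{t-2}$. With this fix, $(2q_0-1)/3\leq d\leq(2q_0+1)/3$ holds and WLP follows. The paper just writes ``with $t=k+2$'' and leaves this step implicit, so your corrected argument is slightly more complete there.
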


\begin{proof}
    We consider $k$ such that $2^k\leq d-1<2^{k+1}$. It follows that \[2^{k-1}\leq\frac{a_1\oplus a_2\oplus a_3}{2}=\frac{d-1}{2}\quad\text{and}\quad 2\cdot\max(a_1,a_2,a_3)=2(d-1)<2^{k+2},\]
    so it suffices to consider $q=2^k$ and $q=2^{k+1}$ in Theorem~\ref{thm:WLP-char-2}.

    If $q=2^k$ then $\theta_q(a_i) = 2q-1-d$ for $d\neq 2q$ and $\theta_q(a_i)=-\infty$ for $d=2q$. Since $2q\geq(4q-1)/3$, we get
    \[\sum_{i=1}^n \theta_q(a_i) \leq 2q-2 \quad\Longleftrightarrow\quad d\geq\frac{4q-1}{3}=\frac{2^{k+2}-1}{3}.\]
    If $q=2^{k+1}$ then $\theta_q(a_i)=d-1$, hence
    \[\sum_{i=1}^n \theta_q(a_i) \leq 2q-2 \quad\Longleftrightarrow\quad d\leq\frac{2q+1}{3}=\frac{2^{k+2}+1}{3}.\]
    Combining the above inequalities with Theorem~\ref{thm:WLP-char-2} we obtain the desired conclusion with $t=k+2$.
\end{proof}

\subsection{WLP in $n=3$ variables.} We prove the following simple characterization for the failure of WLP, and explain how it is a reformulation of \cite{bre-kai}*{Theorem~2.6}.

\begin{theorem}\label{thm:WLP-n=3}
    If $n=3$ then WLP fails for \eqref{eq:KT-mod-Td} if and only if there exists $q=p^k\geq p$ and $m\geq 0$ such that
    \begin{equation}\label{eq:ineq-WLP-n=3}
    |d-(2m+1)q| \leq \frac{q-2}{3}.
    \end{equation}
\end{theorem}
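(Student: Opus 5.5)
The plan is to specialize Theorem~\ref{thm:WLP-constant-ai} to $n=3$ and then translate the surviving condition on $\theta_q$ into the stated inequality. For $n=3$, condition (2) of \eqref{eq:conds-xiq} reads $\xi_q=\theta_q(d-1-q)$ and $\xi_q\leq q$. The bound $\xi_q\leq q$ is automatic, since $\theta_q\leq q-1$. So WLP fails if and only if there is $q=p^k\geq p$ such that both of the following hold:
\begin{itemize}
\item $\xi_q>(2q-2)/3$;
\item either $d\leq q$, or $\xi_q\neq\theta_q(d-1-q)$ (that is, $\theta_q(d-1)>\theta_q(d-1-q)$).
\end{itemize}

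The first step is to show that the second condition is implied by $\theta_q(d-1)>(2q-2)/3$. Write $d-1=2mq+\ol{d}$ as in \eqref{eq:d-1=2mq+dbar}. We split into three cases.
\begin{itemize}
\item If $\ol{d}=q-1$, then $\theta_q(d-1)=q-1$ while $\theta_q(d-1-q)=-\infty$.
\item If $\ol{d}=2q-1$, then $\theta_q(d-1)=-\infty$, so this case cannot occur.
\item Otherwise \eqref{eq:theta-dbar-pm-q} gives $\theta_q(d-1)+\theta_q(d-1-q)=q-2$. Hence $\theta_q(d-1)>(2q-2)/3\geq (q-2)/2$ forces $\theta_q(d-1-q)<\theta_q(d-1)$.
\end{itemize}
Conversely, if $\xi_q=\theta_q(d-1-q)>(2q-2)/3$ with $d>q$, then condition (2) holds, so that $q$ does not witness failure. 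Combining both directions, WLP fails if and only if $\theta_q(d-1)>(2q-2)/3$ for some $q=p^k\geq p$. When $d\leq q$ we have $\xi_q=d-1=\theta_q(d-1)$, so the same formulation also covers that case.

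The second step is to unwind $\theta_q(d-1)\geq(2q-1)/3$, which is the integer form of the strict inequality. With $d-1=2qm+r$ and $0\leq r\leq 2q-1$, the definition \eqref{eq:def-theta-q} gives $\theta_q(r)=(q-1)-|r-(q-1)|$ for $0\leq r\leq 2q-2$, and $-\infty$ for $r=2q-1$. So the condition becomes
\[
|r-(q-1)|\leq (q-1)-\frac{2q-1}{3}=\frac{q-2}{3}.
\]
Substituting $r=d-1-2qm$, this is exactly $|d-(2m+1)q|\leq (q-2)/3$, which is \eqref{eq:ineq-WLP-n=3}.

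The main obstacle is checking that the conversion is faithful in both directions, rather than any single computation. The point to verify is that every integer $d$ satisfying \eqref{eq:ineq-WLP-n=3} for some $m\geq 0$ has $r$ lying in $[0,2q-2]$ with the quotient equal to exactly that $m$. This holds because the window of radius $(q-2)/3$ around $q-1$ stays inside $[0,2q-2]$. When $m=0$ the formula also absorbs the case $d\leq q$. The degenerate case $q=2$ (with $p=2$) needs a separate sanity check: there \eqref{eq:ineq-WLP-n=3} forces $d\equiv 2\pmod 4$, which agrees with $\theta_2(d-1)\geq 1$.
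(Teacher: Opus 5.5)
Your proposal is correct and follows the paper's proof: you specialize Theorem~\ref{thm:WLP-constant-ai} to $n=3$, use \eqref{eq:theta-dbar-pm-q} to show that \eqref{eq:conds-xiq} fails at $q$ exactly when $\theta_q(d-1)\geq(2q-1)/3$, and then unwind this via $d-1=2mq+\ol{d}$ into \eqref{eq:ineq-WLP-n=3}. Your additional checks make explicit what the paper's last step leaves implicit: the formula $\theta_q(r)=(q-1)-|r-(q-1)|$, and the fact that the window of radius $(q-2)/3$ around $q-1$ lies inside $[0,2q-2]$, so $m$ is the actual quotient of $d-1$ by $2q$.
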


When $d$ is even, $(q-2)$ has the same parity as $3\cdot|d-(2m+1)q|$, so \eqref{eq:ineq-WLP-n=3} is equivalent to
\[ |d-(2m+1)q|<\frac{q}{3},\text{ or }-\frac{q}{3}<d-(2m+1)q<\frac{q}{3}\]
which can be rewritten as
\[ \frac{3d}{6m+2}>q>\frac{3d}{6m+4}.\]
When $d$ is odd, we can rewrite \eqref{eq:ineq-WLP-n=3} as
\[ |d-(2m+1)q|<\frac{q-1}{3},\text{ or equivalently }\frac{3d-1}{6m+2}>q>\frac{3d+1}{6m+4}.\]
It follows that Theorem~\ref{thm:WLP-n=3} is equivalent to \cite{bre-kai}*{Theorem~2.6}.

\begin{proof}[Proof of Theorem~\ref{thm:WLP-n=3}]
    Applying Theorem~\ref{thm:WLP-constant-ai}, the failure of WLP is equivalent to the existence of $q=p^k\geq p$ such that \eqref{eq:conds-xiq} fails. Since $n=3$ is odd and $q/(n-2)=q>\xi_q$, condition (2) in \eqref{eq:conds-xiq} is equivalent to the equality $\xi_q=\theta_q(d-1-q)$. It follows that \eqref{eq:conds-xiq} fails if and only if $\xi_q=\theta_q(d-1)\geq(2q-1)/3$. 

    Note that if $\theta_q(d-1)\geq(2q-1)/3$ then either $\theta_q(d-1-q)=-\infty$ or 
    \[\theta_q(d-1-q)=q-2-\theta_q(d-1)\leq(q-5)/3\leq(2q-1)/3\leq\theta_q(d-1),\] 
    hence $\xi_q=\theta_q(d-1)$. Using notation \eqref{eq:d-1=2mq+dbar}, we have $\theta_q(d-1) = \theta_q(\ol{d})$, and it follows that WLP fails if and only if $\theta_q(\ol{d})\geq(2q-1)/3$. This is further equivalent to
    \[ \frac{2q-1}{3}\leq \ol{d}=d-1-2mq \leq 2q-2-\frac{2q-1}{3}.\]
    Adding $1-q$ to the above chain of inequalities we obtain \eqref{eq:ineq-WLP-n=3}, which concludes our proof.
\end{proof}

\subsection{WLP in $n=4$ variables.} The characteristic $2$ case was addressed in Section~\ref{subsec:WLP-char2-constantai}, so in order to complete the classification of WLP in $4$ variables, we verify the following result due to Kustin and Vraciu.

\begin{theorem}[\cite{kus-vra}*{Theorem 5.1}]
 If $d\geq 2$, $p\geq 3$ and $n=4$, then \eqref{eq:KT-mod-Td} satisfies WLP if and only if there exist integers $a,k,r$ such that
 \[d = a\cdot p^k+r,\text{ with }1\leq a\leq\frac{p-1}{2},\ r=\frac{p^k\pm 1}{2}.\]
\end{theorem}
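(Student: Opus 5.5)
The plan is to specialize Theorem~\ref{thm:WLP-constant-ai} to $n=4$ and then convert its conditions into a statement about the base-$p$ shape of $d$.

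\emph{Step 1: reduce to one inequality per $q$.} Since $n=4$ is even, alternative (2) in \eqref{eq:conds-xiq} never applies. So WLP holds exactly when
\[\xi_q\le \frac{2q-2}{4}=\frac{q-1}{2}\quad\text{for every } q=p^k\ge p.\]

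\emph{Step 2: describe this condition for each $q$.}
\begin{itemize}
\item If $d\le q$, the condition reads $d-1\le (q-1)/2$, i.e.\ $q\ge 2d-1$. In particular $d=q$ fails, because $d-1=q-1$.
\item If $d>q$ and $q\mid d$, then $\xi_q=q-1$ and WLP fails. This agrees with \eqref{eq:WLP-p-divides-d}.
\item If $d>q$ and $q\nmid d$, then \eqref{eq:theta-dbar-pm-q} gives $\theta_q(d-1)+\theta_q(d-1-q)=q-2$. Since $q$ is odd, $\max\le (q-1)/2$ forces the two values to be $(q-1)/2$ and $(q-3)/2$ in some order.
\end{itemize}
Next I would unwind the definition of $\theta_q$. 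The condition $\theta_q(x)=(q-1)/2$ means $x\equiv (q-1)/2$ or $(3q-3)/2 \pmod{2q}$, and similarly for $(q-3)/2$. Combining the residues of $d-1$ and $d-1-q$ should give the clean criterion
\[d\equiv \tfrac{q\pm1}{2}\pmod q.\]
This is a short bookkeeping check. I expect it to be the step needing the most care: the signs, and the edge residues $q-1$ and $2q-1$ where $\theta_q=-\infty$.

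\emph{Step 3: keep only the top power.} Let $K$ be maximal with $p^K<d$, or $K=0$ if $d\le p$; set $p^0=1$. The conditions become:
\begin{itemize}
\item $d\equiv (p^j\pm1)/2 \pmod{p^j}$ for $1\le j\le K$;
\item $2d-1\le p^{K+1}$. Larger $q$ are then automatic.
\end{itemize}
The identity
\[\frac{p^j\pm1}{2}=\frac{p-1}{2}\,p^{j-1}+\frac{p^{j-1}\pm1}{2}\]
shows that the congruence at level $K$ implies those at all lower levels, with the same sign. So WLP is equivalent to
\[d\equiv r:=\frac{p^K\pm1}{2}\pmod{p^K}\quad\text{and}\quad d\le \frac{p^{K+1}+1}{2}.\]

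\emph{Step 4: match with the stated form.} Write $d=a p^K+r$. From $d>p^K$ we get $a\ge1$. The bound $d\le (p^{K+1}+1)/2$ gives $a\le (p-1)/2$ for either sign. For the case $K=0$ (i.e.\ $d\le p$, including $d=p$, which is excluded since $\xi_p=p-1$), the condition $d\le (p+1)/2$ matches $d=a+r$ with $r\in\{0,1\}$ and $1\le a\le (p-1)/2$.

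Conversely, given $d=ap^k+r$ of the stated form, I would check that $k$ equals the $K$ above, since $p^k<d<p^{k+1}$, and then that both conditions of Step 3 hold. This completes the equivalence.
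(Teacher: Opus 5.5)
Your proposal is correct and follows essentially the same route as the paper: you specialize Theorem~\ref{thm:WLP-constant-ai} to $n=4$ so that only $\xi_q\le (q-1)/2$ matters, show this amounts to $2d-1\le q$ for $q\ge d$ and to $d\equiv (q\pm1)/2 \pmod q$ for $q<d$, and observe that the congruence at the top level implies those at all lower levels. The differences are cosmetic: you normalize by $p^K<d\le p^{K+1}$ and absorb $d\le p$ as the case $K=0$, and you derive the congruence from the sum identity $\theta_q(d-1)+\theta_q(d-1-q)=q-2$, whereas the paper uses $p^k\le d<p^{k+1}$, treats $d\le p$ separately, and lists the admissible residues $\ol{d}$ directly.
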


\begin{proof} If $d\leq p$ then $\xi_p=d-1$ and since $n=4$ is even, Theorem~\ref{thm:WLP-constant-ai} implies that WLP holds if and only if $d-1\leq(p-1)/2$. If WLP holds then we can take $a=d-1$, $k=0$ and $r=1$ to get the desired expression for $d$. Conversely, the expression above for $d$ can only hold if $k=0$ and $d=a$ or $d=a+1$, and the condition $a\leq (p-1)/2$ implies $d-1\leq(p-1)/2$, which yields WLP.

We may therefore assume $d>p$, and find $k\geq 1$ such that $p^k\leq d<p^{k+1}$. We write $d=a\cdot p^k+r$, with $1\leq a\leq p-1$ and $0\leq r\leq p^k-1$. Applying Theorem~\ref{thm:WLP-constant-ai}, we have to check that condition \eqref{eq:conds-xiq} holds for all $q$ if and only if $a\leq(p-1)/2$ and $r=(p^k\pm 1)/2$. Since $n$ is even, condition \eqref{eq:conds-xiq} is equivalent to $\xi_q\leq(2q-2)/n=(q-1)/2$. 

If $q\geq d$ then $\xi_q=d-1$, so condition \eqref{eq:conds-xiq} for all $q\geq p^{k+1}$ is equivalent to
\[ d \leq \frac{p^{k+1}+1}{2} = \frac{p-1}{2}\cdot p^k + \frac{p^k+1}{2},\]
which implies $a\leq(p-1)/2$. We may therefore assume from now on that $1\leq a\leq(p-1)/2$.

If $p\leq q\leq p^k$ then $\xi_q=q-1$ if $q|d$ and otherwise $\xi_q\geq(q-2)/2$ by \eqref{eq:theta-dbar-pm-q}. It follows that the condition $\xi_q\leq(q-1)/2$ is equivalent to $\xi_q=(q-1)/2$ since $q$ is odd. Using notation \eqref{eq:d-1=2mq+dbar}, $\xi_q=(q-1)/2$ if and only if $\ol{d}\in\left\{\frac{q-3}{2},\frac{q-1}{2},q+\frac{q-3}{2},q+\frac{q-1}{2}\right\}$, which is then equivalent to
\begin{equation}\label{eq:d-equiv-modq-forWLP} 
d=2mq+1+\ol{d} \equiv \frac{q\pm 1}{2} \ (\op{mod}\ q).
\end{equation}
Applying this with $q=p^k$ we obtain $r=(q\pm 1)/2$, which in turn implies \eqref{eq:d-equiv-modq-forWLP} for all $p\leq q\leq p^k$.
\end{proof}

\subsection{WLP in $n\geq 5$ variables.} 

As discussed in Section~\ref{subsec:WLP-char2-constantai}, if $p=2$ and $n\geq 5$ then WLP fails for all $d\geq 2$. We next show that if $p\geq 3$ then WLP holds only if we are in the situation of Theorem~\ref{thm:WLP-from-socle}(1).

\begin{theorem}[\cite{kus-vra}*{Theorem 6.4}]
    If $n\geq 5$, $p \geq 3$, then \eqref{eq:KT-mod-Td} satisfies WLP if and only if
    \[ d \leq \frac{2p+n-2}{n}.\]
\end{theorem}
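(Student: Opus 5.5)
The sufficiency direction is immediate from earlier results. If $d\le (2p+n-2)/n$, then $n(d-1)\le 2p-2$, so the socle degree $s=n(d-1)$ satisfies $s\le 2p-2$. Hence WLP holds by Theorem~\ref{thm:WLP-from-socle}(1); equivalently, no $q$ satisfies \eqref{eq:bounds-q-ineq-WLP} in Theorem~\ref{thm:new-WLP}. All the work is in the converse. There I assume $n(d-1)\ge 2p-1$ and exhibit a $q=p^k\ge p$ for which both alternatives in \eqref{eq:conds-xiq} of Theorem~\ref{thm:WLP-constant-ai} fail.

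Let $q_0$ be the smallest power of $p$ with $q_0\ge d$. If $q_0=p$, then $(2p+n-1)/n\le d\le p$, and WLP fails directly by \eqref{eq:Md-easyfail-WLP}. Otherwise $q_0\ge p^2$, and I use $q=q_0/p\ge p$, which satisfies $q<d$. In this case $\xi_q=\max\{\theta_q(d-1),\theta_q(d-1-q)\}$. If $q\mid d$, then $\xi_q=q-1$. If $q\nmid d$, then by \eqref{eq:theta-dbar-pm-q} the two values sum to $q-2$. Since $q$ is odd and $q-2$ is odd, the larger one is at least $(q-1)/2$.

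It remains to compare this lower bound with the two thresholds in \eqref{eq:conds-xiq}. Since $n\ge 5$, we have $(q-1)/2>(2q-2)/5\ge (2q-2)/n$, so alternative (1) fails. For alternative (2), I need $\xi_q>q/(n-2)$ whenever $n$ is odd and $\xi_q=\theta_q(d-1-q)$. Since $q/(n-2)\le q/3$, this follows from $(q-1)/2>q/3$, which holds for $q\ge 5$, as well as from $\xi_q=q-1$. So Theorem~\ref{thm:WLP-constant-ai} gives failure of WLP in all cases except the degenerate one below.

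The main obstacle is the residual case $q=p=3$ with $3\nmid d$ and $\xi_3=1$, where the bound $q/(n-2)$ can be as large as $1$ (for $n=5$). Here $p=3$, $d>3$, and $q_0=3^{k+1}\ge 9$. My first attempt is to rerun the same argument with a larger intermediate power $q$ with $q<d$, whenever $d>9$. If that does not suffice, I would return to Theorem~\ref{thm:new-WLP} and directly construct an even balanced $3$- or $9$-reduction. Such a reduction mixes the residues $\ol{d}$ and $\ol{d}\pm q$ as in \eqref{eq:possible-aibar} so that $\sum\theta_q(r_i)\ge 2q-1$. For the few small values of $d$ left over (for instance $4\le d\le 8$ with $n=5$), I would check \eqref{eq:Md-easyfail-WLP} and \eqref{eq:thetaq-leq-gap} by hand.
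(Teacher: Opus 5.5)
Your main argument is correct and follows the same template as the paper. In both, you exhibit one $q$ at which \eqref{eq:conds-xiq} fails, using that $\xi_q\ge (q-1)/2$ by \eqref{eq:theta-dbar-pm-q} together with the oddness of $q$. The difference is the choice of $q$:
\begin{itemize}
\item The paper always takes $q=p$. It therefore has to dispose of $p\mid d$ separately via \eqref{eq:WLP-p-divides-d}. Its exceptional case $(n,p)=(5,3)$ then contains arbitrarily large $d$, which it handles uniformly with $q=9$: either $d\le 9$ and $\xi_9=d-1\ge 4$, or $\xi_9\ge 7/2$.
\item You take $q$ to be the largest power of $p$ below $d$. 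This absorbs the divisible case, since then $\xi_q=q-1$, without citing \eqref{eq:WLP-p-divides-d}. It also confines the exception to a finite window of $d$.
\end{itemize}

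However, you do not actually close that window, and your plan for it is muddled. In your setup, $q=q_0/p=3$ forces $q_0=9$, hence $4\le d\le 9$. So ``$q_0=3^{k+1}\ge 9$'', the branch ``whenever $d>9$'', and the fallback of constructing $3$- or $9$-reductions never arise.

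The finish is short. Alternative (2) at $q=3$ requires three things:
\begin{itemize}
\item $n=5$, because for odd $n\ge 7$ we have $3/(n-2)<1\le\xi_3$;
\item $3\nmid d$;
\item $\xi_3=\theta_3(d-4)$.
\end{itemize}
For $d=4$ the third condition fails, since $\xi_3=\max(\theta_3(3),\theta_3(0))=1\neq 0=\theta_3(0)$. So WLP already fails at $q=3$. For $5\le d\le 9$ and $n=5$, \eqref{eq:Md-easyfail-WLP} with $q=9$ applies, because $(2\cdot 9+5-1)/5=22/5\le d\le 9$. With these lines added, your proof is complete.
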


\begin{proof}
    Since $M_{\ul{a}}$ has socle degree $s=n\cdot(d-1)$, it follows from Theorem~\ref{thm:WLP-from-socle}(1) that WLP holds when $d\leq(2p+n-2)/n$. We then have to verify that WLP fails if $d\geq(2p+n-1)/n$. If $d\leq p$, this is a consequence of \eqref{eq:Md-easyfail-WLP}, while for $p|d$ it follows from \eqref{eq:WLP-p-divides-d}. We may then further assume that $d\geq p+1$, $p\nmid d$.

    Using the notation in Theorem~\ref{thm:WLP-constant-ai} and \eqref{eq:theta-dbar-pm-q}, it follows that $\xi_p\geq(p-2)/2$, and since $p$ is odd, we have $\xi_p\geq(p-1)/2$. Since $n\geq 5$ we have
    \[\xi_p\geq\frac{p-1}{2}>\frac{2p-2}{n},\]
    so condition (1) in \eqref{eq:conds-xiq} cannot hold for $q=p$. If $(n,p)\neq(5,3)$ we have moreover that
    \[\xi_p\geq\frac{p-1}{2} > \frac{p}{n-2},\]
    so condition (2) in \eqref{eq:conds-xiq} also fails for $q=p$ and $(n,p)\neq (5,3)$. 
    
    If $(n,p)=(5,3)$ then in order for condition (2) in \eqref{eq:conds-xiq} to hold for $q=p$, we must have $\xi_p = \theta_p(d-1-p)=1$, which forces $d\geq 5$. Taking $q=p^2=9$ we either have $d\leq q$ and $\xi_q = d-1\geq 4$, or $\xi_q\geq(q-2)/2 = 7/2$. In either case we get $\xi_q > (2q-2)/n=16/5$ and $\xi_q>q/(n-2)=3$, so \eqref{eq:conds-xiq} does not hold and WLP fails, concluding our proof.
\end{proof}

\section{The strong Lefschetz property}
\label{sec:SLP}

In this section, we focus on the strong Lefschetz property (SLP) and give an alternative proof of the characterization in \cites{nick,lund-nick}. Our approach leverages the graded Green--Han--Monsky representation ring discussed in \cite{KMRR}, and considers SLP in the context of modules \cite{harima-watanabe}. Following \cite{green}, \cite{han-monsky}, we consider the category $\mc{G}$ of finite length graded $\kk[T]$-modules $M$, and define the tensor product $M\oo_{\kk} N$ by letting $T$ act~via
\[ T \cdot (m\oo n) = Tm\oo n + m\oo Tn.\]
We write $\Delta$ for the split Grothendieck ring of $\mc{G}$, and refer to $\Delta$ as the \defi{graded Green--Han--Monsky (GHM) representation ring} (see \cite{KMRR}*{Section~3} for some basic properties). A $\bb{Z}$-basis for $\Delta$ consists of the indecomposable modules $\delta_c(-j)$ in $\mc{G}$, where $\delta_c=\kk[T]/(T^c)$ for $c\geq 1$, and $j\in\bb{Z}$ indicates the degree shift for the cyclic generator  (we will abuse notation and use the same symbol for an object in $\mc{G}$ and its class in $\Delta$).

If $1\leq a\leq b$ then there exist positive integers $c_j=c_j(a,b)$, $0\leq j<a$, which depend on $\op{char}(\kk)$, such that
\begin{equation}\label{eq:dela-delb-general} \delta_a\oo_{\kk}\delta_b=\bigoplus_{j=0}^{a-1}\delta_{c_j}(-j).
\end{equation}
In the ring $\Delta$ we will simply write $\delta_a\delta_b$ for the class of the tensor product. When convenient, we work in the ungraded category $\mc{G}^u$ and the corresponding ring $\Delta^u$, where degree shifts are ignored--in \eqref{eq:dela-delb-general} they can be recovered from the ungraded multiplication using $c_0 \geq c_1 \geq \cdots \geq c_{a-1}$ (see \cite{KMRR}*{Proposition~3.1}). 

We say that the decomposition \eqref{eq:dela-delb-general} is \defi{standard} (see also \cite{StandardJordan}) if
\begin{equation}\label{eq:standard-cj}
    c_j=a+b-1-2j\text{ for }j=0,\cdots,a-1,
\end{equation}
which is the decomposition when $\op{char}(\kk)=0$. Our goal is to explain how SLP for the monomial complete intersections \eqref{eq:def-Ma} translates in terms of standard decompositions, and show how Renaud's work \cite{renaud} leads to a classification of when SLP holds.

\subsection{SLP and standard products in the GHM ring}
\label{subsec:SLP=standardGHM}

For a non-zero module $M$ in $\mc{G}$ we write 
\[M=\bigoplus_{i\in\bb{Z}} M_i\] 
and define the \defi{average degree of $M$} as (note that $M_i\neq 0$ for finitely many values of $i$)
\begin{equation}\label{eq:def-avg-deg}
    \avg(M) = \frac{\sum_{i\in\bb{Z}} i\cdot \dim_{\kk}(M_i)}{\dim_{\kk}M}.
\end{equation}
It is a weighted average of the dimensions of the graded components of $M$, and whenever we have a direct sum decomposition $M=M^1\oplus\cdots\oplus M^r$, with $M,M^j\in\mc{G}$, we obtain
\begin{equation}\label{eq:av=weightedsum}
    \avg(M) =\frac{\sum_{j=1}^r \avg(M^j)\cdot\dim_{\kk}(M^j)}{\dim_{\kk}M}.
\end{equation}
Moreover, for $M',M''\in\mc{G}$, their tensor product satisfies
\[\avg(M'\oo_\kk M'') = \avg(M')+\avg(M'').\]
For the indecomposable modules $\delta_c(-j)$ we have
\[\avg(\delta_c(-j)) = j+\frac{c-1}{2}.\]
Following the terminology in \cite{harima-watanabe}*{Section~2}, we say that $M$ is \defi{symmetric} if
\[\dim_{\kk}M_i = \dim_{\kk}M_{s-i}\quad\text{ for all }i\in\bb{Z},\text{ where }s=2\avg(M),\]
and note that for such modules $s/2=\avg(M)$ is called the 
\defi{reflecting degree} of $M$ in \cite{harima-watanabe}. A symmetric module $M$ has the \defi{strong Lefschetz property (SLP)} if for each $i\leq\avg(M)$ we have ($s=2\avg(M)$ as above)
\[\times T^{s-2i} : M_i \lra M_{s-i} \text{ is an isomorphism}.\]
The modules $\delta_c(-j)$ are symmetric, and they satisfy SLP with $s=c+2j-1$. 

Expressing a module $M\in\mc{G}$ as a sum of indecomposables $\delta_c(-j)$ (possibly with multiplicities), it follows from the definition (see also \cite{harima-watanabe}*{Lemma~3.3}) that 
\begin{equation}\label{eq:SLP=samecenter}
M\text{ has SLP }\Longleftrightarrow\avg(M) = \avg(\delta_c(-j))\text{ for each summand }\delta_c(-j)\text{ of }M.
\end{equation}
It follows from \eqref{eq:av=weightedsum} and \eqref{eq:SLP=samecenter} that for symmetric modules $M$ without SLP there must exist summands $\delta_{c_1}(-j_1)$ and $\delta_{c_2}(-j_2)$ of $M$ satisfying
\[\avg(\delta_{c_1}(-j_1)) < \avg(M) < \avg(\delta_{c_2}(-j_2)).\]

If $M',M''$ are symmetric then so is $M'\oo_{\kk} M''$, but SLP is usually not preserved under tensor products. Our interest is in the SLP property for the module $M_{\ul{a}} = \delta_{1+a_1}\oo \cdots \oo \delta_{1+a_n}$, which can be identified with the algebra \eqref{eq:def-Ma} by writing $\delta_{1+a_i}=\kk[T_i]/(T_i^{1+a_i})$ and letting $T=T_1+\cdots+T_n$. It follows from the discussion in Section~\ref{subsec:conventions} that SLP for the algebra \eqref{eq:def-Ma} can be tested on the linear form $\ell=T$, hence it is the same as SLP for the $\kk[T]$-module $M_{\ul{a}}$. Note that
\[2\avg(M_{\ul{a}}) = a_1+\cdots+a_n\]
is the socle degree of $M_{\ul{a}}$. Applying \eqref{eq:SLP=samecenter} to $M_{\ul{a}}$ we obtain
\begin{equation}\label{eq:SLP-forMa}
M_{\ul{a}}\text{ has SLP }\Longleftrightarrow c+2j-1=a_1+\cdots+a_n\text{ for each summand }\delta_c(-j)\text{ of }M_{\ul{a}}.
\end{equation}

\begin{theorem}\label{thm:tensor-SLP}
    Suppose that $M=M'\oo_{\kk} M''$, with $M',M''\in\mc{G}$. The following are equivalent:
    \begin{enumerate}
        \item $M$ has SLP.
        \item $M'$ and $M''$ have SLP, and for all summands $\delta_{c'}(-j')$ of $M'$, and $\delta_{c''}(-j'')$ of $M''$, $\delta_{c'}\oo\delta_{c''}$ has SLP.
    \end{enumerate}
\end{theorem}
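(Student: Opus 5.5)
The plan is to reduce everything to the criterion \eqref{eq:SLP=samecenter}: a module has SLP if and only if every indecomposable summand has the same average degree as the whole module. The tool for this is additivity of $\avg$ under tensor products. I would write $M'=\bigoplus_\alpha \delta_{c'_\alpha}(-j'_\alpha)$ and $M''=\bigoplus_\beta\delta_{c''_\beta}(-j''_\beta)$. Then
\[M=\bigoplus_{\alpha,\beta}\delta_{c'_\alpha}(-j'_\alpha)\oo\delta_{c''_\beta}(-j''_\beta)=\bigoplus_{\alpha,\beta}\left(\delta_{c'_\alpha}\oo\delta_{c''_\beta}\right)(-j'_\alpha-j''_\beta),\]
and each piece $\delta_{c'_\alpha}\oo\delta_{c''_\beta}$ decomposes further via \eqref{eq:dela-delb-general}. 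For each $\alpha,\beta$, write $N_{\alpha\beta}=\delta_{c'_\alpha}(-j'_\alpha)\oo\delta_{c''_\beta}(-j''_\beta)$. Its average is $\avg(N_{\alpha\beta})=\avg(\delta_{c'_\alpha}(-j'_\alpha))+\avg(\delta_{c''_\beta}(-j''_\beta))$. Also, $N_{\alpha\beta}$ has SLP if and only if $\delta_{c'_\alpha}\oo\delta_{c''_\beta}$ does, since a degree shift changes neither property.

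For (2)$\Rightarrow$(1), the argument is direct. If $M'$ and $M''$ have SLP, then by \eqref{eq:SLP=samecenter} every summand of $M'$ has average $\avg(M')$ and every summand of $M''$ has average $\avg(M'')$. So every $N_{\alpha\beta}$ has average $\avg(M')+\avg(M'')=\avg(M)$. By hypothesis each $N_{\alpha\beta}$ has SLP, so each of its indecomposable summands has average $\avg(N_{\alpha\beta})=\avg(M)$. Hence every indecomposable summand of $M$ (by Krull--Schmidt, these are exactly the summands of the $N_{\alpha\beta}$) has average $\avg(M)$, and $M$ has SLP by \eqref{eq:SLP=samecenter}.

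For (1)$\Rightarrow$(2), assume $M$ has SLP, so every indecomposable summand of $M$ has average $\avg(M)$. By \eqref{eq:av=weightedsum}, each $N_{\alpha\beta}$, being a direct sum of such summands, also has average $\avg(M)$, and therefore has SLP by \eqref{eq:SLP=samecenter}. This is the SLP of $\delta_{c'}\oo\delta_{c''}$ for all pairs of summands. It remains to show that $M'$ has SLP; the case of $M''$ is symmetric.

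The key step, which I expect to need the most care, is the following. Fix a summand $\beta_0$ of $M''$. The identity $\avg(\delta_{c'_\alpha}(-j'_\alpha))+\avg(\delta_{c''_{\beta_0}}(-j''_{\beta_0}))=\avg(M)$ holds for every $\alpha$. So all summands of $M'$ have the same average $A'=\avg(M)-\avg(\delta_{c''_{\beta_0}}(-j''_{\beta_0}))$. By \eqref{eq:av=weightedsum}, $A'=\avg(M')$, so $M'$ has SLP by \eqref{eq:SLP=samecenter}.

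Two points need checking here. First, one must confirm that \eqref{eq:SLP=samecenter} is being applied to modules that are symmetric. This is automatic, because a direct sum of $\delta_c(-j)$'s with a common center is symmetric. Second, $M',M''$ must be nonzero so that the summands exist; if one of them is zero, the statement is trivial or vacuous.
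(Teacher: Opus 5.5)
Your proof is correct and takes essentially the same approach as the paper. Both directions rest on the criterion \eqref{eq:SLP=samecenter}, additivity of $\avg$ under $\oo_{\kk}$, the weighted-average formula \eqref{eq:av=weightedsum}, and Krull--Schmidt, and your (2)$\Rightarrow$(1) is the paper's argument. The only variation is in (1)$\Rightarrow$(2): you first show that each block $N_{\alpha\beta}$ has average exactly $\avg(M)$ and then deduce by subtraction that all summands of $M'$ share a common average. The paper instead argues by contradiction, pairing a summand of $M'$ with average below $\avg(M')$ with a summand of $M''$ of average at most $\avg(M'')$ to produce a summand of $M$ with average below $\avg(M)$.
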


\begin{proof} Write $r=\avg(M)$ and $r'=\avg(M')$, $r''=\avg(M'')$, so $r=r'+r''$.  

\noindent $(1)\Rightarrow (2).$ If $M'$ failed SLP, it would have an indecomposable summand $U'$ with $\avg(U')<r'$.  Choose a summand $U''$ of $M''$ with $\avg(U'')\leq r''$, and then a summand $U$ of $U'\otimes_{\kk} U''$ with
\[
 \avg(U)\leq \avg(U')+\avg(U'')<r'+r''=r.
\]
This contradicts \eqref{eq:SLP=samecenter} for $M$.  Hence $M'$ has SLP, and similarly $M''$ has SLP.  It follows that every indecomposable summand of $M'$ and $M''$ has average $r'$ and $r''$, respectively.  If some $U'\otimes_{\kk} U''$ failed SLP, it would contain a summand of average strictly less than $r'+r''=r$, again a contradiction. Since degree shifts don't change SLP, we get that $\delta_{c'}\oo\delta_{c''}$ has SLP, as desired.

\noindent $(2)\Rightarrow (1).$  Every indecomposable summand $U$ of $M$ occurs in $U'\otimes_{\kk} U''$ for some indecomposable summands $U'\subset M'$ and $U''\subset M''$. By hypothesis $U'\otimes_{\kk} U''$ has SLP, so the average of $U$ is
\[
\avg(U)= \avg(U'\otimes_{\kk} U'')=\avg(U')+\avg(U'')=r'+r''=r,
\]
and (6.5) implies that SLP holds for $M$.
\end{proof}

\begin{corollary}\label{cor:inductive-SLP}
    Fix $\ul{a}\in\bb{Z}^n_{\geq 0}$, with $a_i\geq 0$ for $1\leq i\leq n$. The following are equivalent:
    \begin{enumerate}
        \item $M_{\ul{a}}$ has SLP.
        \item $M_{(a_1,\cdots,a_{n-1})}$ has SLP, and for each summand $\delta_{c'}(-j')$ of $M_{(a_1,\cdots,a_{n-1})}$, $M_{(c'-1,a_n)}$ has SLP.
    \end{enumerate}
\end{corollary}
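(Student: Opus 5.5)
This follows directly from Theorem~\ref{thm:tensor-SLP}, applied with
\[M' = M_{(a_1,\cdots,a_{n-1})} = \delta_{1+a_1}\oo\cdots\oo\delta_{1+a_{n-1}}\quad\text{and}\quad M''=\delta_{1+a_n},\]
so that $M_{\ul{a}}=M'\oo_{\kk}M''$. The object $M''=\delta_{1+a_n}$ is indecomposable. As noted before \eqref{eq:SLP-forMa}, every $\delta_c(-j)$ is symmetric and has SLP. Hence the requirement in Theorem~\ref{thm:tensor-SLP}(2) that $M''$ have SLP holds automatically. Its only summand is $\delta_{1+a_n}$ itself, with shift $j''=0$.

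With $M''$ handled, condition (2) of Theorem~\ref{thm:tensor-SLP} reduces to two requirements. First, $M_{(a_1,\cdots,a_{n-1})}$ has SLP. Second, for each summand $\delta_{c'}(-j')$ of $M'$, the product $\delta_{c'}\oo\delta_{1+a_n}$ has SLP. To identify the second with the statement, I write $\delta_{c'}=\kk[T_1]/(T_1^{c'})$ and $\delta_{1+a_n}=\kk[T_2]/(T_2^{1+a_n})$, with $T$ acting as $T_1+T_2$. This is exactly the module $M_{(c'-1,a_n)}$ defined before \eqref{eq:SLP-forMa}. Since $c'\geq 1$, we have $c'-1\geq 0$, so this is a legitimate degree sequence.

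The remaining points are routine. The equivalence (1)$\Leftrightarrow$(2) in Theorem~\ref{thm:tensor-SLP} is stated for arbitrary $M',M''\in\mc{G}$. The degree shift $-j'$ does not affect SLP, which the proof of Theorem~\ref{thm:tensor-SLP} already notes and the statement here already absorbs. SLP for the algebra $M_{\ul{a}}$ agrees with SLP for the $\kk[T]$-module $M_{\ul{a}}$, by Section~\ref{subsec:conventions}. I expect no real obstacle. The only care needed is to confirm that $M_{(a_1,\cdots,a_{n-1})}$ is symmetric, so that SLP makes sense for it in the module sense. This holds because tensor products of the symmetric modules $\delta_c$ are symmetric.
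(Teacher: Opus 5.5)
Your proposal is correct and is exactly the paper's argument: apply Theorem~\ref{thm:tensor-SLP} with $M'=M_{(a_1,\cdots,a_{n-1})}$ and $M''=\delta_{1+a_n}$. Since $M''$ is indecomposable with SLP, the theorem's condition (2) reduces to statement (2) here once you identify $\delta_{c'}\oo\delta_{1+a_n}$ with $M_{(c'-1,a_n)}$. Your extra checks (symmetry of $M'$, irrelevance of the shift $-j'$) are harmless and only spell out what the paper leaves implicit.
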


\begin{proof}
    The conclusion follows by applying Theorem~\ref{thm:tensor-SLP} with $M=M_{\ul{a}}$, $M'=M_{(a_1,\cdots,a_{n-1})}$, and $M''=\delta_{1+a_n}$, and using the fact that $M_{(c'-1,a_n)}=\delta_{c'}\oo \delta_{1+a_n}$.
\end{proof}

The equality on the right-hand side of \eqref{eq:SLP-forMa} always holds in characteristic zero, and more generally for summands $\delta_c(-j)$ where $c$ is coprime to the characteristic $p$ \cite{KMRR}*{Proposition~3.3}. It is the summands $\delta_c$ where $p|c$ that will detect the failure of SLP, and we illustrate this with a simple scenario.

\begin{corollary}\label{cor:failSLP-pdivis}
    If $n\geq 2$, $\op{char}(\kk)=p\geq 3$, $a_1,\cdots,a_n\geq 1$, and $p|a_i+1$ for some $i$, then SLP fails for $M_{\ul{a}}$.
\end{corollary}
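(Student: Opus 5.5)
Since SLP for $M_{\ul{a}}$ forces SLP for every sub-tensor product (Theorem~\ref{thm:tensor-SLP}, applied to $M_{\ul{a}}=\delta_{1+a_i}\oo\delta_{1+a_j}\oo M''$ after reordering factors), it suffices to exhibit two indices $i\neq j$ for which $\delta_{1+a_i}\oo\delta_{1+a_j}$ fails SLP. Reorder so that $p\mid c:=a_1+1$, and let $b=a_2+1\geq 2$. The plan is to show that $\delta_c\oo\delta_b$ fails SLP whenever $p\mid c$, $p\geq 3$ and $b\geq 2$. By \eqref{eq:SLP-forMa}, in two variables SLP holds exactly when the decomposition \eqref{eq:dela-delb-general} is standard. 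So the task is to show that the product of $\delta_c$ (with $p\mid c$) and $\delta_b$ (with $b\geq 2$) is nonstandard.

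The key computation is the case $b=2$. Here $\delta_2=\kk[T_2]/(T_2^2)$ and $\delta_c\oo\delta_2$ has dimension $2c$. I will compute the rank of $T^{2c-2}$ on the degree $0$ piece, where $T=T_1+T_2$. Since $T_2^2=0$, we have
\[
T^{N}=T_1^{N}+N\,T_1^{N-1}T_2 .
\]
The Jordan type is standard, namely $\delta_{c+1}\oplus\delta_{c-1}(-1)$, if and only if $T^{c}\cdot 1\neq 0$. We have $T^{c}=T_1^{c}+cT_1^{c-1}T_2=c\,T_1^{c-1}T_2$, using $T_1^c=0$. This vanishes because $p\mid c$. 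Hence $\delta_c\oo\delta_2=\delta_c\oplus\delta_c(-1)$. Its summands have averages $(c-1)/2$ and $(c+1)/2$, which differ, so SLP fails by \eqref{eq:SLP=samecenter}.

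For general $b\geq 2$, I would reduce to the case $b=2$ by working with Jordan types rather than restricting to a submodule. The difficulty is that $\delta_2$ is not a direct summand of $\delta_b$. There are two possible routes.

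The first route is an induction via Corollary~\ref{cor:inductive-SLP} in the ring $\Delta^u$, using the relation $\delta_2\delta_{b}=\delta_{b+1}+\delta_{b-1}$, which is valid when $p\nmid b$. This route is awkward.

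The second route is direct and is the one I would try first: show that $T^{b+c-2}$ kills the generator $1\oo 1$, so that no summand of length $b+c-1$ appears. We have
\[
T^{b+c-2}(1\oo1)=\sum_{k}\binom{b+c-2}{k}T_1^kT_2^{b+c-2-k},
\]
and the only surviving term is $k=c-1$, with coefficient $\binom{b+c-2}{c-1}$. It therefore suffices to find a case where this binomial coefficient is divisible by $p$. By Lucas/Kummer, $\binom{b+c-2}{b-1}$ is divisible by $p$ exactly when adding $b-1$ and $c-1$ in base $p$ produces a carry. Since $c\equiv 0 \pmod p$, the last digit of $c-1$ is $p-1$. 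So if $p\nmid b-1$, a carry occurs in the last digit and the coefficient vanishes.

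That leaves the case $p\mid b-1$, and this is the main obstacle. In that case I would either pass to $T^{b+c-4}$ and look for a second-longest summand, or swap the roles of the factors and pick a different pair. If all $a_j+1\equiv 1 \pmod p$ for $j\neq i$, I would instead test $T^{b+c-2-2t}$ on a suitable generator, or use the hypothesis $p\geq 3$ to conclude that the two largest Jordan blocks cannot both be centered at $(b+c-2)/2$. Once the longest block is absent, $\dim\ker$ considerations force a block with off-center average, and the failure of SLP follows from \eqref{eq:SLP=samecenter}.
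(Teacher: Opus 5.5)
Your reduction to a single pair $\delta_c\oo\delta_b$ with $p\mid c$ and $b\geq 2$ via Theorem~\ref{thm:tensor-SLP} is sound. So are the case $b=2$ and the top-degree argument: $T^{b+c-2}\cdot 1=\binom{b+c-2}{c-1}T_1^{c-1}T_2^{b-1}$ vanishes by Kummer when $p\nmid b-1$, so $T^{s}\colon M_0\to M_s$ is zero and SLP fails.

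The gap is the case $b\equiv 1\pmod p$, which you leave open, and it cannot be avoided. Take $n=2$, $p=3$, $\ul{a}=(2,3)$. The only pair is $c=3$, $b=4$, and $\binom{5}{2}=10\not\equiv 0\pmod 3$, so the longest block $\delta_6$ is present. Yet $\delta_3\delta_4=\delta_6+2\delta_3$ (one $(3,1)$-adjustment in Renaud's algorithm), so SLP fails for a reason your top-degree test cannot see. None of your fallbacks closes this. Swapping the two factors changes nothing. A different pair need not exist, or may also have $b\equiv 1$. Your last sentence presupposes that the longest block is absent. The claim that ``the two largest Jordan blocks cannot both be centered'' is precisely what needs a proof, and you supply no input that would give it.

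The missing ingredient, which the paper uses, is the divisibility theorem \cite{han-monsky}*{Theorem~3.6}: if $p\mid c$, every indecomposable summand of $\delta_c\oo\delta_b$ has length divisible by $p$. A standard decomposition has $\min(b,c)\geq 2$ blocks, including $\delta_{b+c-1}$ and $\delta_{b+c-3}$, which would force $p\mid 2$. This handles every $b\geq 2$ at once, with no case split. The paper applies it to $M'=M_{(a_1,\dots,a_{n-1})}$ together with Corollary~\ref{cor:inductive-SLP}, but your pair reduction works equally well.

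If you prefer explicit binomial computations, your idea of going one step down does work. For $p\mid b-1$, set $N=b+c-4$. The map $T^{N}\colon M_1\to M_{b+c-3}$ goes between $2$-dimensional spaces and has determinant $D=\binom{N}{c-2}^2-\binom{N}{c-1}\binom{N}{c-3}$. One checks that $(c-1)D=\binom{N+1}{c-2}\binom{N}{c-2}$. Now $c-1$ is a unit mod $p$, and adding $c-2\equiv p-2$ to $b-2\equiv p-1$ in base $p$ produces a carry when $p\geq 3$. Kummer then gives $p\mid D$, so SLP fails in degree $1$. Without one of these two arguments the proof is incomplete.
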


\begin{proof} We may assume $p|a_1+1$, hence every summand $\delta_{c'}(-j)$ of $M_{(a_1,\cdots,a_{n-1})}$ satisfies $p|c'$ \cite{han-monsky}*{Theorem~3.6}. If $M_{\ul{a}}$ has SLP, then Corollary~\ref{cor:inductive-SLP} implies that so does $M_{(c'-1,a_n)}$. By \eqref{eq:dela-delb-general}, \eqref{eq:SLP-forMa}, we have 
\[M_{(c'-1,a_n)} = \delta_{c'}\oo \delta_{1+a_n} = \delta_{c'+a_n}+\delta_{c'+a_n-2}(-1)+\cdots\]
Since $p\mid c'$, every length of a summand in $\delta_{c'}\oo\delta_{1+a_n}$ is divisible by $p$. We get $p|c'+a_n$ and $p|c'+a_n-2$, so $p$ divides their difference which equals $2$, contradicting the fact that $p\geq 3$.    
\end{proof}

\subsection{Renaud's algorithm}
\label{subsec:renaud-algo}

In this section we recall Renaud's algorithm \cite{renaud}*{Section~3} for computing the (ungraded) product $\delta_a\delta_b$ in characteristic $p>0$. If $1\leq a\leq b$ then the characteristic zero expression is
\begin{equation}\label{eq:dela-delb-char0}
\delta_a\delta_b = \delta_{b-a+1}+\delta_{b-a+3}+\cdots+\delta_{b+a-1}=\sum_{i=0}^{a-1} \delta_{b-a+1+2i}.
\end{equation}

\begin{definition}\label{def:adjustments}
    Fix $1\leq a\leq b$, let $q=p^k$, $k\geq 1$, and let $m\geq 1$ with $\gcd(m,p)=1$. We write
    \[ a = a_0q+a_1,\quad b=b_0q+b_1,\quad\text{with } 0\leq a_1,b_1<q,\]
    and define 
    \[\eta_q(m) = \begin{cases}
        |a_1-b_1| & \text{if }m\equiv a_0+b_0\ (\op{mod }2),\\
        |q-(a_1+b_1)| & \text{if }m\not\equiv a_0+b_0\ (\op{mod }2).\\
    \end{cases}\]
    Let $\bb{E}$ be a linear combination of indecomposable summands $\delta_c$, such as the
    right-hand side of \eqref{eq:dela-delb-char0}. We say that a
    \defi{$(q,m)$-adjustment occurs in $\bb{E}$} if $\eta=\eta_q(m)\geq 2$ and one of the following holds:
    \begin{enumerate}
        \item $\eta$ is even and both $\delta_{mq+1},\delta_{mq-1}$ appear in $\bb{E}$.
        \item $\eta$ is odd and $\delta_{mq}$ appears in $\bb{E}$.
    \end{enumerate}
    In this case the \defi{$(q,m)$-adjustment} of $\bb{E}$ replaces the collection of summands $\delta_c$ with $mq-\eta<c<mq+\eta$, by $\eta\cdot\delta_{mq}$. We call $q$ the \defi{adjustment level} and $mq$ the \defi{adjustment center}.
\end{definition}

Renaud's algorithm starts with the characteristic-zero decomposition
\eqref{eq:dela-delb-char0} and performs a sequence of adjustments, ultimately producing the decomposition in characteristic $p$. More precisely, let $\gamma$ be minimal such that $p^\gamma>a,b$. The algorithm first performs all possible adjustments at level $p^\gamma$
(in any order), then all possible adjustments at level $p^{\gamma-1}$, and continues successively through the levels $p^{\gamma-2},\ldots,p$. We note that the original formulation of Renaud's algorithm allows for $\eta\in\{0,1\}$, but the resulting adjustment does not change the expression $\bb{E}$ so we omit this possibility in our setup.  

\begin{remark}\label{rem:Renaud}
(1) Any adjustment produces a summand with multiplicity $\geq 2$, and any further adjustments will continue to have such summands. It follows that \eqref{eq:dela-delb-char0} is the characteristic $p$ expression for the product $\delta_a\delta_b$ if and only if no adjustments occur when running Renaud's algorithm.

\noindent (2) The center $mq$ for any adjustment in Renaud's algorithm must lie strictly between $b-a+1$ and $b+a-1$. Indeed, it follows from Definition~\ref{def:adjustments} that $b-a+1\leq mq\leq b+a-1$, so we must verify that the inequalities are strict. If $mq=b-a+1$ then either $m=b_0-a_0$ and $b_1=a_1-1$, or $m=b_0-a_0+1$ and $(a_1,b_1)=(0,q-1)$, and in both cases we have $\eta=1$. Similarly, if $mq=b+a-1$ then either $m=b_0+a_0$ and $a_1+b_1=1$, or $m=b_0+a_0+1$ and $b_1+a_1-1=q$, forcing again $\eta=1$.

\noindent (3) In Definition~\ref{def:adjustments} we have $a+b-mq = (a_0+b_0-m)\cdot q+(a_1+b_1)$, which has the same parity as $\eta$. It follows that for the expression $\bb{E}$ in \eqref{eq:dela-delb-char0}, if the adjustment center $mq$ lies strictly between $b-a+1$ and $b+a-1$ then one of the conditions (1), (2) in Definition~\ref{def:adjustments} must be satisfied. If this is the case then a $(q,m)$-adjustment occurs if and only if $\eta\geq 2$.
\end{remark}

\begin{example}\label{ex:smalla-p=2}
    Suppose that $p=2$. For $a=2\leq b$ we have
    \[ \delta_2\delta_b = \begin{cases}
        2\cdot\delta_{b} & \text{if }b=2r,\\
        \delta_{b-1} + \delta_{b+1} & \text{if }b=2r+1.\\
    \end{cases}\]
    In the case $b=2r$, we write $r=2^{k-1}\cdot m$ with $k\geq 1$, $m$ odd, and the desired formula follows by applying a $(2^k,m)$-adjustment to the characteristic zero expression $\delta_{b-1}+\delta_{b+1}$. In the case $b=2r+1$ is odd, no adjustments can occur. A similar analysis shows that for $a=3\leq b$: if we write $r=2^{k-2}\cdot m$ with $m$ odd, $k\geq 2$, then we have
    \[
    \delta_3\delta_b = 
    \begin{cases}
        3\cdot \delta_{b} & \text{if }b=4r,\text{ after a $(2^k,m)$-adjustment},\\
        2\cdot \delta_{b-1}+\delta_{b+2} & \text{if }b=4r+1,\text{ after a $(2^k,m)$-adjustment},\\
        \delta_{b-2}+\delta_b+\delta_{b+2} & \text{if }b=4r+2,\text{ no adjustment},\\
        \delta_{b-2}+2\cdot \delta_{b+1} & \text{if }b=4r-1,\text{ after a $(2^k,m)$-adjustment}.\\
    \end{cases}
    \]
\end{example}

For $q\geq 1$ and $r\in\bb{Z}$ we consider the minimum distance from $r$ to a multiple of $q$:
\begin{equation}\label{eq:def-qdist}
    d_q(r) = \min_{t\in\bb{Z}}|r-tq|,
\end{equation}
noting that for all $r$ we have $0\leq d_q(r)\leq\lfloor q/2\rfloor$.

\begin{lemma}\label{lem:large-q-noadjustment}
    Consider the characteristic zero expression $\bb{E}$ in \eqref{eq:dela-delb-char0}, $1\leq a\leq b$. The following are equivalent:
    \begin{enumerate}
        \item $d_q(b) \geq a-1$ for all prime powers $q=p^k>a$.
        \item no adjustments occur in $\bb{E}$ for prime powers $q=p^k>a$.
    \end{enumerate}
\end{lemma}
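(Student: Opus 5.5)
The plan is to reduce both conditions to one statement about multiples of $q$ near $b$. Fix a prime power $q=p^k>a$. In the notation of Definition~\ref{def:adjustments} we then have $a_0=0$ and $a_1=a$, while $b=b_0q+b_1$ with $0\leq b_1<q$. The key claim is the following: as long as $\bb{E}$ is still the characteristic zero expression \eqref{eq:dela-delb-char0}, a $(q,m)$-adjustment occurs in $\bb{E}$ if and only if $p\nmid m$ and $|b-mq|\leq a-2$.

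To prove the claim I would start from Remark~\ref{rem:Renaud}. Parts (2) and (3) say that, for the expression \eqref{eq:dela-delb-char0}, a $(q,m)$-adjustment occurs exactly when $mq$ lies strictly between $b-a+1$ and $b+a-1$ and $\eta_q(m)\geq 2$. The condition on the center is the same as $|b-mq|\leq a-2$. Since $q>a$, this interval has length less than $q$, so it contains at most one multiple of $q$, and that multiple is $b_0q$ or $(b_0+1)q$. It remains to check that $\eta\geq 2$ holds automatically in both cases.
\begin{itemize}
\item If $m=b_0$, then $m\equiv a_0+b_0 \pmod 2$, so $\eta=|a-b_1|$. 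Here $b_1=b-mq\leq a-2$, hence $\eta\geq 2$.
\item If $m=b_0+1$, then $m\not\equiv a_0+b_0 \pmod 2$, so $\eta=|q-a-b_1|=|(mq-b)-a|$. Here $mq-b\leq a-2$, hence $\eta=a-(mq-b)\geq 2$.
\end{itemize}
This proves the claim. It also shows that for such $q$, the condition ``$|b-mq|\leq a-2$ for some $m$'' is the same as $d_q(b)\leq a-2$.

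For $(2)\Rightarrow(1)$, argue by contrapositive. Suppose $d_q(b)\leq a-2$ for some $q=p^k>a$. The set of such $q$ is finite: once $q>2b$ we have $d_q(b)=b\geq a$. So take the largest such $q$, and let $mq$ be the multiple of $q$ with $|b-mq|\leq a-2$. If $p\mid m$, then $q'=pq>a$ would satisfy $d_{q'}(b)\leq a-2$, contradicting maximality; hence $p\nmid m$. For every level $q'>q$ we have $d_{q'}(b)\geq a-1$, so by the claim no adjustment occurs at those higher levels, and $\bb{E}$ is still \eqref{eq:dela-delb-char0} when Renaud's algorithm reaches level $q$. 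The claim then produces a $(q,m)$-adjustment at level $q>a$, so (2) fails.

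For $(1)\Rightarrow(2)$, assume (1) and run the algorithm from the top level down. By induction on the levels, $\bb{E}$ remains \eqref{eq:dela-delb-char0}. At each level $q>a$, an adjustment would by the claim force $d_q(b)\leq a-2$, which contradicts (1), so no adjustment occurs at any level $q>a$.

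The main point needing care is the observation that adjustments at levels above $q$ must be ruled out first, so that Remark~\ref{rem:Renaud}(3) applies to the unchanged characteristic zero expression. Choosing the maximal bad $q$ handles both this and the coprimality requirement $\gcd(m,p)=1$.
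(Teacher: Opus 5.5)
Your proof is correct and is essentially the paper's argument: at a level $q>a$ the only candidate centers are $b_0q$ and $(b_0+1)q$, with $\eta=a-b_1$ resp.\ $a-(q-b_1)$, which is automatically $\geq 2$ once the center is within $a-2$ of $b$. The one difference is how you get $p\nmid m$: you take the largest bad $q$, which also guarantees that $\bb{E}$ is still unadjusted when the algorithm reaches that level, whereas the paper factors $p^r$ out of the multiplier and works at level $Q=p^rq$ directly in the characteristic zero expression.

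There is one harmless slip. The window $[b-a+2,b+a-2]$ has length $2a-4$, which can exceed $q$; for example, with $p=q=7$, $a=6$, $b=10$, both $7$ and $14$ are adjustment centers. So the window may contain both $b_0q$ and $(b_0+1)q$. This does not affect the argument, since you check each candidate separately and maximality makes every such $m$ prime to $p$, but ``the multiple'' should read ``a multiple''.
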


\begin{proof}
    $(1) \Rightarrow (2).$ Suppose that a $(q,m)$-adjustment occurs for $q=p^k>a$. With the notation in Definition~\ref{def:adjustments} we have $a_0=0$, $a_1=a$, and $d_q(b)=\min(b_1,q-b_1)$. The condition $d_q(b)\geq a-1$ implies $b-a+1\geq b_0\cdot q$ and $b+a-1\leq(b_0+1)\cdot q$. This contradicts the inequality $b-a+1<mq<b+a-1$ from Remark~\ref{rem:Renaud}(2).

\noindent $(2) \Rightarrow (1).$ Suppose that $d_q(b)\leq a-2$ (in particular $a\geq 2$) and use the notation in Definition~\ref{def:adjustments}. If $d_q(b)=b_1$ then $b-a+1<b_0\cdot q<b+a-1$. We write $b_0 = m\cdot p^r$ with $p\nmid m$, and set $Q = p^r\cdot q$. We then have $b = m\cdot Q + b_1$, $a=0\cdot Q + a$, and $\eta_Q(m)=|a-b_1|\geq 2$, so a $(Q,m)$-adjustment occurs. If instead $d_q(b)=q-b_1$ then $b-a+1<(b_0+1)\cdot q<b+a-1$. We write $b_0+1 = m\cdot p^r$ with $p\nmid m$, and set $Q = p^r\cdot q$. We obtain $b = (m-1)\cdot Q + (Q-q+b_1)$, $a=0\cdot Q+a$, hence $\eta_Q(m)=|Q-(Q-q+b_1+a)|=|(q-b_1)-a|\geq 2$ and a $(Q,m)$-adjustment occurs again, concluding our proof.
\end{proof}

\subsection{SLP in two variables}
\label{subsec:SLP-2vars}
We can now give a quick proof of the characterization of SLP in $n=2$ variables. In characteristic $p=2$, we have the following (see \cite{Cook12}*{Corollary~4.8} and \cite{StandardJordan}*{Theorem~1}). 

\begin{theorem}\label{thm:SLP-n=p=2}
    Suppose that $\op{char}(\kk)=2$ and that $2\leq a\leq b$. The algebra $\kk[T_1,T_2]/\langle T_1^a,T_2^b\rangle$ has SLP if and only if one of the following holds:
    \begin{enumerate}
        \item $a=2$ and $b$ is odd.
        \item $a=3$ and $b\equiv 2\text{ (mod 4)}$.
    \end{enumerate}
\end{theorem}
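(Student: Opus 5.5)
By \eqref{eq:SLP-forMa} applied to $n=2$ and the ungraded form of \eqref{eq:dela-delb-general}, the algebra $\kk[T_1,T_2]/\langle T_1^a,T_2^b\rangle=\delta_a\oo\delta_b$ has SLP if and only if $c_j+2j-1=a+b-2$ for all $j$. That is, if and only if the decomposition is standard in the sense of \eqref{eq:standard-cj}. By Remark~\ref{rem:Renaud}(1), this happens if and only if no adjustment occurs when Renaud's algorithm is run on \eqref{eq:dela-delb-char0}. So the theorem becomes a purely combinatorial question about adjustments in characteristic $2$.

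For $a=2$ and $a=3$ I would simply read the answer off Example~\ref{ex:smalla-p=2}. For $a=2$ the decomposition is standard exactly when $b$ is odd. For $a=3$ it is standard exactly in the case $b=4r+2$ (``no adjustment''), i.e.\ $b\equiv 2\pmod 4$. This gives conditions (1) and (2).

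It remains to show that some adjustment always occurs when $a\geq 4$. The reduction I would use is the following. If any adjustment occurs at a level above $q$, we are already done. Otherwise the expression is still \eqref{eq:dela-delb-char0} when level $q$ is reached. By Remark~\ref{rem:Renaud}(3), it then suffices to exhibit $q=2^k\geq 2$ and $m$ odd with
\[
b-a+1<mq<b+a-1 \quad\text{and}\quad \eta_q(m)\geq 2 .
\]
First I would apply Lemma~\ref{lem:large-q-noadjustment} with $q$ the least power of $2$ exceeding $a$. Since $q\le 2a$, we have $d_q(b)\leq q/2\leq a$, and whenever $q/2\leq a-2$ we get $d_q(b)\leq a-2$, hence an adjustment.

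This leaves $a\in\{Q,Q+1\}$ with $Q=2^k\geq 4$, and this is the step I expect to be the main obstacle. Here I would work at level $Q$ (and, if needed, at level $Q/2$) directly. Write $a=1\cdot Q+a_1$ with $a_1\in\{0,1\}$ and $b=b_0Q+b_1$. The open interval $(b-a+1,\,b+a-1)$ has length $2a-2\geq 2Q-2$. Hence it contains $b_0Q$ or $(b_0+1)Q$, and usually both. Taking $m$ odd among the available multipliers (after factoring out powers of $2$ as in the proof of Lemma~\ref{lem:large-q-noadjustment}, which raises the level), $\eta$ equals either $|a_1-b_1|$ or $|Q-a_1-b_1|$, according to the parity of $m$. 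I would then check the few residues $b_1\in\{0,1,Q-1,Q-2,\dots\}$ for which both candidate values of $\eta$ are at most $1$. In those residues $b$ is very close to a multiple of $Q$, and I expect a level-$Q/2$ center $m\cdot Q/2$ with $m$ odd to lie in the interval with $\eta_{Q/2}(m)\geq 2$, since $Q/2\geq 2$ and the interval is long. The bookkeeping of the parity of $m$ against $a_0+b_0$ in Definition~\ref{def:adjustments} is where errors are likely, so I would verify this case split carefully, perhaps first checking $Q=4$ by hand.
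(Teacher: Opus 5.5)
\textbf{There is a gap in the residual case $a\in\{Q,Q+1\}$.} Your setup matches the paper: the reduction of SLP to ``no Renaud adjustment occurs'', reading off $a=2,3$ from Example~\ref{ex:smalla-p=2}, and using Lemma~\ref{lem:large-q-noadjustment} to leave only $a\in\{Q,Q+1\}$ with $Q=2^k\geq 4$. The residual case, however, is only sketched, and the sketch fails exactly where it is needed.

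\emph{The formula for $\eta$ after raising the level is wrong.} When a center $cQ$ has $c$ even and you pass to level $Q'=2^rQ$, the residues become $a$ and $b \bmod Q'$. So $\eta_{Q'}$ is in general \emph{not} one of $|a_1-b_1|$, $|Q-a_1-b_1|$.

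\emph{The fallback to level $Q/2$ fails.} Take the only residue in which both of your level-$Q$ candidates are $\leq 1$, namely $Q=4$, $a=5$, $b\equiv 2\pmod 4$. For $b=6$ the characteristic zero expression is $\delta_2+\delta_4+\delta_6+\delta_8+\delta_{10}$.
\begin{itemize}
\item The level-$4$ center $4$ has $\eta_4(1)=|4-3|=1$.
\item Every level-$2$ value equals $|1-b_1|\leq 1$, because $a$ is odd.
\item The adjustment that actually occurs is the $(8,1)$-adjustment, with $\eta_8(1)=|8-11|=3$. It gives $\delta_5\delta_6=\delta_2+\delta_4+3\delta_8$.
\end{itemize}
Your guess that $b$ is ``very close to a multiple of $Q$'' in the bad residues is also off: here $b$ is as far as possible from multiples of $4$.

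\textbf{How to repair it within your framework.} Use Lemma~\ref{lem:large-q-noadjustment} at full strength rather than through the crude bound $d_q(b)\leq q/2$. SLP forces $d_{2Q}(b)\geq a-1$; for instance $d_8(6)=2\leq 3$ already rules out $a=5$, $b=6$. Hence $b\equiv Q\pmod{2Q}$ when $a=Q+1$, and $b\equiv Q,Q\pm 1\pmod{2Q}$ when $a=Q$. In each of these cases one of $b$, $b-1$, $b+1$ is an odd multiple $mQ$ lying strictly inside $(b-a+1,b+a-1)$. Its value is $\eta_Q(m)\in\{Q-1,Q\}$, which is at least $3$, so Remark~\ref{rem:Renaud}(3) produces an adjustment at level $Q$.

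\textbf{The paper's route.} The paper avoids this bookkeeping altogether. In the residual cases one has $Q\mid a$ or $Q\mid b$. By the Han--Monsky divisibility result (as in Corollary~\ref{cor:failSLP-pdivis}), every summand of $\delta_a\delta_b$ then has length divisible by $Q\geq 4$. This is incompatible with the standard decomposition \eqref{eq:dela-delb-char0}, whose summand lengths differ by $2$.
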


\begin{proof}
    If $a=2,3$ then the desired conclusion follows from Example~\ref{ex:smalla-p=2}, so we may assume that $a\geq 4$. We let $q=2^k\geq 4$ such that $q\leq a<2q$, and note that by Lemma~\ref{lem:large-q-noadjustment} we have $d_{2q}(b)\geq a-1\geq q-1$. Since $d_{2q}(b)\leq q$, it follows that $a\in\{q,q+1\}$. If $a=q$ then every summand of $\delta_a\delta_b$ has the form $\delta_{cq}$ (see \cite{han-monsky}*{Theorem~3.6}) so \eqref{eq:dela-delb-char0} cannot hold (see also the proof of Corollary~\ref{cor:failSLP-pdivis}). It follows that $a=q+1$ and $d_{2q}(b)=q$, so $q|b$ and we obtain again that every summand of $\delta_a\delta_b$ has the form $\delta_{cq}$, a contradiction.
\end{proof}

For odd primes we obtain the following characterization, which is equivalent to \cite{nick}*{Theorem~3.2} and \cite{StandardJordan}*{Theorems~2 and~3}.

\begin{theorem}\label{thm:SLP-n=2-p>2}
    Suppose that $\op{char}(\kk)=p>2$ and that $2\leq a\leq b$. Let $q=p^k$, $k\geq 0$, such that $q\leq a<pq$. The algebra $\kk[T_1,T_2]/\langle T_1^a,T_2^b\rangle$ has SLP if and only if $d_{pq}(b)\geq a-1$ and one of the following holds:
    \begin{enumerate}
        \item $a<p$.
        \item $a\geq p$, and $d_q(a)=d_q(b)=(q-1)/2$ (or equivalently, $a,b\equiv (q\pm 1)/2\text{ (mod $q$)}$).
    \end{enumerate}
\end{theorem}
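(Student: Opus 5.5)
The plan is to reduce SLP for $\delta_a\delta_b$ (note $\kk[T_1,T_2]/\langle T_1^a,T_2^b\rangle=\delta_a\oo\delta_b$) to the absence of adjustments in Renaud's algorithm, via \eqref{eq:SLP-forMa} and Remark~\ref{rem:Renaud}(1). Concretely, SLP holds if and only if \eqref{eq:dela-delb-char0} is the characteristic $p$ decomposition. Indeed, a standard decomposition has every summand $\delta_{c_j}(-j)$ with $c_j+2j-1=a+b-2$. Conversely, since the $c_j$ are decreasing, SLP forces $c_j=a+b-1-2j$, which is the standard decomposition. So the task is to decide exactly when no $(Q,m)$-adjustment occurs. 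I will split according to the level $Q$: either $Q>a$ (equivalently $Q\geq pq$), or $p\leq Q\leq q$.

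For levels $Q\geq pq$, Lemma~\ref{lem:large-q-noadjustment} says that no adjustments occur if and only if $d_{Q}(b)\geq a-1$ for all such $Q$. I claim that $d_{pq}(b)\geq a-1$ implies $d_Q(b)\geq a-1$ for every larger $Q$. The reason is that if $b$ is within $a-1$ of a multiple of $Q$, it is within $a-1$ of a multiple of $pq$, since multiples of $Q$ are multiples of $pq$. This gives the first condition. If $a<p$, then $q=1$ and there are no levels $p\leq Q<pq$, so case (1) follows.

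Now assume $a\geq p$, so $q\geq p$, and $d_{pq}(b)\geq a-1$. After the top-level step the expression is unchanged, so I analyze adjustments at levels $Q=p^j\leq q$ on the characteristic zero expression. By Remark~\ref{rem:Renaud}(3), a $(Q,m)$-adjustment occurs whenever some center $mQ$ with $p\nmid m$ lies strictly between $b-a+1$ and $b+a-1$ and $\eta_Q(m)\geq 2$. Since $a\geq Q$, that interval has length $2a-2\geq 2Q-2$, so it contains several multiples of $Q$ with consecutive $m$. Among two consecutive $m$ at most one is divisible by $p$, and the two possible parities of $m$ give both values $|a_1-b_1|$ and $|Q-a_1-b_1|$ of $\eta_Q$. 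Hence no adjustment at level $Q$ forces $|a_1-b_1|\leq 1$ and $|Q-a_1-b_1|\leq 1$ (modulo the parity of which $m$ are available). With $Q$ odd, this forces $\{a_1,b_1\}\subseteq\{(Q-1)/2,(Q+1)/2\}$, i.e.\ $d_Q(a)=d_Q(b)=(Q-1)/2$. Conversely, if $a,b\equiv(q\pm1)/2 \pmod q$, then reducing modulo $Q\mid q$ gives $a,b\equiv(Q\pm1)/2\pmod Q$, because $(q\pm1)/2\equiv(Q\pm1)/2 \pmod Q$. Then both possible $\eta$ values are at most $1$, so no adjustment occurs at any level.

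The main obstacle is the necessity direction at the level $Q=q$. There I must make sure that, for each parity class, the interval $(b-a+1,b+a-1)$ really contains a center $mq$ with $p\nmid m$ and the required parity of $m$. The interval may contain only about two multiples of $q$ when $a$ is close to $q$, and one of them could be divisible by $p$. I would handle this by choosing $m$ near $b/q$ and using $p\geq3$ to find two admissible consecutive values of $m$ (if one is divisible by $p$, step to the other side). I would then apply Remark~\ref{rem:Renaud}(3) to each. Boundary cases where the interval contains a single multiple of $q$ need a separate check, using $d_{pq}(b)\geq a-1$ to locate $b$ relative to the multiples of $q$.
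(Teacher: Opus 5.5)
Your route is the paper's. You reduce SLP to the absence of adjustments in Renaud's algorithm, and handle levels $Q\ge pq$ with Lemma~\ref{lem:large-q-noadjustment} together with $d_Q(b)\ge d_{pq}(b)$. At level $q$ you use two centers $mq,(m+1)q$ of opposite parity to force $|a_1-b_1|\le1$ and $|q-a_1-b_1|\le1$. Your sufficiency argument, reducing $(q\pm1)/2$ modulo $Q\mid q$, is complete.

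The gap is in the necessity direction at level $Q=q$, which you flag but do not close. You claim that an interval of length $2a-2\ge 2q-2$ contains consecutive centers. This is false when $a=q$ and $b\equiv 0,\pm1\pmod q$, and when $a=q+1$ and $q\mid b$. In those cases $(b-a+1,b+a-1)$ contains a single multiple of $q$. Your remedy, ``if one $m$ is divisible by $p$, step to the other side,'' is also unavailable in general, since there may be only two centers. So the hedge ``modulo the parity of which $m$ are available'' is exactly the unproved step.

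Two observations close this, and they are what the paper uses.

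\emph{No center is ever divisible by $p$.} The hypothesis $d_{pq}(b)\ge a-1$ says precisely that no multiple of $pq$ lies strictly inside $(b-a+1,b+a-1)$. Hence every center $mq$ in that interval already has $p\nmid m$, and no stepping is needed.

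\emph{The bad cases can be discarded.} Every bad case above has $p\mid a$ or $p\mid b$. These are excluded at the outset by Corollary~\ref{cor:failSLP-pdivis}: SLP fails there, and the congruence condition fails too. Once $p\nmid a,b$, write $a=a_0q+a_1$ and $b=b_0q+b_1$ with $a_0\ge1$ and $a_1,b_1\ge1$. Then
\[ b-a+1<(b_0-a_0+1)q<(b_0-a_0+2)q\le(b_0+a_0)q<a+b-1, \]
so two consecutive admissible centers exist, and Remark~\ref{rem:Renaud}(3) finishes the argument.

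Alternatively, the ``separate check'' you propose does work if you carry it out. In each bad case the unique center $mq$ has $p\nmid m$ by the first observation. A direct computation then gives $\eta_q(m)\in\{q-1,q\}$, which is at least $2$, so an adjustment occurs and SLP fails. As written, though, the proposal leaves this key step of the necessity direction unproved.
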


\begin{proof}
    If $a<p$ then $q=1$ and the desired conclusion follows from Lemma~\ref{lem:large-q-noadjustment}. We may therefore assume that $q\geq p$, and that $d_{pq}(b)\geq a-1$ (which is a requirement for SLP by Lemma~\ref{lem:large-q-noadjustment}). We may further assume that $p\nmid a,b$ since otherwise SLP fails by Corollary~\ref{cor:failSLP-pdivis}, and the condition $a,b\equiv (q\pm 1)/2\text{ (mod $q$)}$ fails as well.
    
    If we write $a=a_0q+a_1$, $b=b_0q+b_1$ as in Definition~\ref{def:adjustments}, then $1\leq a_0\leq b_0$ and $1\leq a_1,b_1<q$, hence 
    \[ b-a+1=(b_0-a_0)q+b_1-a_1+1 < (b_0-a_0+1)q\quad\text{ and }\quad (b_0+a_0)q < a+b-1.\]
    Since $b_0-a_0+1<b_0+a_0$, we can find two consecutive multiples of $q$ satisfying $b-a+1<mq<(m+1)q<b+a-1$. The condition $d_{pq}(b)\geq a-1$ implies that no multiple of $pq$ lies strictly between $b-a+1$ and $b+a-1$, so $m$ and $m+1$ are coprime to $p$. 

    If SLP holds, then no $(q,m)$- or $(q,m+1)$-adjustment can occur, so by Remark~\ref{rem:Renaud} we get $\eta_q(m),\eta_q(m+1)\leq 1$. This implies $|a_1-b_1|,|q-(a_1+b_1)|\leq 1$, which is only possible when $a_1,b_1\in\{(q\pm 1)/2\}$.

    Conversely, if $a_1,b_1\in\{(q\pm 1)/2\}$ then for every prime power $p\leq q'\leq q$, if we write $a=a'_0 q'+a_1'$, $b=b_0'q'+b_1'$, with $0\leq a_1',b_1'<q'$, then $a_1',b_1'\in\{(q'\pm 1)/2\}$. This means that $\eta_{q'}(m)\leq 1$ for all $q'$ and all $m$, hence no adjustment can occur at level $q'\leq q$. For prime powers $q'\geq pq$ we have $d_{q'}(b)\geq d_{pq}(b)\geq a-1$, so Lemma~\ref{lem:large-q-noadjustment} shows no adjustments can occur at levels $q'>q$ either, and therefore SLP holds.
\end{proof}

One quick consequence of Theorems~\ref{thm:SLP-n=p=2},~\ref{thm:SLP-n=2-p>2} is that if $1\leq a_1, a_2<p$ then
\begin{equation}\label{eq:Ma12-small-SLP}
    M_{(a_1,a_2)}\text{ has SLP }\Longleftrightarrow a_1+a_2<p.
\end{equation}
Indeed, if $p=2$ then we must have $a_1=a_2=1$ and both sides of the equivalence are false. If $p>2$ then we may assume that $a_1\leq a_2$ and let $a=1+a_1$, $b=1+a_2$ in Theorem~\ref{thm:SLP-n=2-p>2}. If $a=p$ then SLP fails, and $a_1+a_2\geq a\geq p$, so both sides of \eqref{eq:Ma12-small-SLP} are false. If $a<p$ then SLP is equivalent to $d_p(b)\geq a-1$, which in turn is equivalent to $p-b\geq a-1$ (since $b\geq a-1$), or $a_1+a_2=a+b-2\leq p-1$, proving \eqref{eq:Ma12-small-SLP}.

\subsection{SLP in $n\geq 3$ variables}
\label{subsec:SLP-manyvars}
The following reproves \cite{Cook12}*{Corollary~6.3}, \cite{lund-nick}*{Theorem~3.8}.

\begin{theorem}\label{thm:SLP-manyvars}
    If $n\geq 3$ and $1\leq a_1\leq\cdots\leq a_n$ then $M_{\ul{a}}$ has SLP if and only if one of the following holds:
    \begin{enumerate}
        \item $a_1+\cdots+a_n<p$.
        \item $a_n\geq p$ and $a_1+\cdots+a_{n-1}\leq d_p(a_n+1)$.
    \end{enumerate}
    In particular, SLP fails for $p=2$.
\end{theorem}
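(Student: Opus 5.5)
The argument will be an induction on $n$ driven by Corollary~\ref{cor:inductive-SLP}. Apply it to $M'=M_{(a_1,\dots,a_{n-1})}$, with the largest exponent $a_n$ adjoined last. Then SLP for $M_{\ul{a}}$ holds if and only if two things hold: $M'$ has SLP, and for every summand $\delta_{c'}$ of $M'$ the two-variable algebra $M_{(c'-1,a_n)}$ has SLP. The two-variable criteria are Theorems~\ref{thm:SLP-n=p=2} and~\ref{thm:SLP-n=2-p>2} together with \eqref{eq:Ma12-small-SLP}.

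\textbf{Step 1: $p=2$.} For $p=2$ I would show directly that SLP fails once $n\geq 3$. Since all $a_i\geq 1$, SLP for $M_{\ul{a}}$ forces SLP for the subproduct $M_{(a_1,a_2)}$ by the (1)$\Rightarrow$(2) direction of Theorem~\ref{thm:tensor-SLP}. If $M_{(a_1,a_2)}$ has SLP, Theorem~\ref{thm:SLP-n=p=2} forces the characteristic-zero decomposition. That decomposition contains a summand $\delta_{c'}$ with $c'=a_2-a_1+1$ or $c'=a_1+a_2+1$. Combining any such $c'$ with $a_3+1$ should violate Theorem~\ref{thm:SLP-n=p=2}, either by parity or because some exponent is divisible by $2$ (via \cite{han-monsky}*{Theorem~3.6}). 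This is a short case check, and it is consistent with (1) and (2) being impossible for $p=2$. From now on assume $p\geq 3$.

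\textbf{Step 2: sufficiency.} In case (1), every partial product $M_{(a_1,\dots,a_k)}$ has characteristic-zero decomposition. The reason is that all summand lengths are at most $a_1+\cdots+a_k+1\leq p$, and a product $\delta_{c'}\delta_{1+a_{k+1}}$ with $(c'-1)+a_{k+1}<p$ has SLP by \eqref{eq:Ma12-small-SLP}. So induction gives SLP.

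In case (2), set $A=a_1+\cdots+a_{n-1}$. Since $A\leq d_p(a_n+1)\leq (p-1)/2<p$, case (1) shows $M'$ has SLP with summands $\delta_{c'}$, $c'\leq A+1$. For each such summand apply Theorem~\ref{thm:SLP-n=2-p>2} with $a=c'<p$, $b=a_n+1$ (so $q=1$). The condition becomes $d_p(a_n+1)\geq c'-1$, which holds because $c'-1\leq A$. I will need care with the case $c'=1$ and with the normalization $a\leq b$; these are fine since $c'\leq A+1\leq a_n+1$.

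\textbf{Step 3: necessity.} Assume SLP holds and $p\geq 3$. By induction, $M'$ satisfies (1), or else $n-1=2$ and the two-variable classification applies. Corollary~\ref{cor:failSLP-pdivis} rules out $p\mid a_i+1$. Suppose first $a_n<p$. Then $a_{n-1}+a_n<p$ by \eqref{eq:Ma12-small-SLP} applied to the subproduct. Next take the largest summand $\delta_{A+1}$ of $M'$, which exists by the characteristic-zero decomposition when $A<p$. Pairing it with $a_n$ gives $A+a_n<p$, which is (1).

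If $a_n\geq p$, I need $A<p$. Once that is known, pairing the largest summand $\delta_{A+1}$ with $b=a_n+1$ in Theorem~\ref{thm:SLP-n=2-p>2} gives $d_p(a_n+1)\geq A$, which is (2).

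\textbf{Main obstacle.} The hardest step is showing that $M'$ must fall in case (1) rather than case (2) when $a_n\geq p$. That is, I must exclude $a_{n-1}\geq p$. For this I would take the pair $(a_{n-1},a_n)$, both $\geq p$ and coprime-shifted to $p$. Theorem~\ref{thm:SLP-n=2-p>2} forces $a_{n-1}+1,\,a_n+1\equiv (q\pm1)/2 \pmod q$ for $q\geq p$. Then adjoining a third factor $\delta_{1+a_1}$, with $a_1\geq 1$, via some summand of $\delta_{1+a_{n-1}}\delta_{1+a_n}$ should produce a length violating the congruence, or a length divisible by $p$. I expect this step to need a Renaud-adjustment computation at level $q=p$.
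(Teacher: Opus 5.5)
Your sufficiency argument, and your necessity argument when $a_{n-1}<p$, are the same as the paper's. The genuine gap is the step you yourself flag as the main obstacle. For $p\geq 3$ you never show that $a_{n-1}\geq p$ is incompatible with SLP. Without that, you cannot conclude $A<p$, and the necessity of (1) or (2) is unproved. Your sketched route (force $a_{n-1}+1,\,a_n+1\equiv (q\pm1)/2$, then run a Renaud computation at level $p$) is not carried out, and it is heavier than what is needed.

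The missing idea is elementary. By Theorem~\ref{thm:tensor-SLP} it suffices to show that $M_{(a_1,a_{n-1},a_n)}$ fails SLP, so assume $M_{(a_{n-1},a_n)}$ has SLP.
\begin{enumerate}
\item By \eqref{eq:SLP-forMa}, $\delta_{1+a_{n-1}}\delta_{1+a_n}$ then has the standard decomposition \eqref{eq:dela-delb-char0}. Its lengths are $b-a+1,b-a+3,\dots,b+a-1$, with $a=1+a_{n-1}\geq p+1$ and $b=1+a_n$.
\item These are at least $p$ consecutive terms of a progression with step $2$. Since $p$ is odd, they hit every residue class mod $p$, so some summand $\delta_c$ has $p\mid c$.
\item Then $\delta_c\oo\delta_{1+a_1}=M_{(c-1,a_1)}$ fails SLP by Corollary~\ref{cor:failSLP-pdivis}. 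This is exactly where $a_1\geq 1$ and $p\geq 3$ are used.
\item Hence $M_{(a_1,a_{n-1},a_n)}$ fails SLP by Corollary~\ref{cor:inductive-SLP}, and so does $M_{\ul{a}}$.
\end{enumerate}
No congruence analysis or Renaud adjustment is required.

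A smaller point concerns your $p=2$ step. It works if you pair the top summand $c'=a_1+a_2+1$ with $\delta_{1+a_3}$. Then Theorem~\ref{thm:SLP-n=p=2} fails because $\min(c',a_3+1)\geq 4$, or else $a_2=a_3=2$ and $c'=4$. It does not work for ``any such $c'$''. For $\ul{a}=(1,2,2)$ the bottom summand $\delta_2$ gives $\delta_2\delta_3=\delta_2+\delta_4$, which is standard. Also, the Han--Monsky divisibility argument alone gives no contradiction at $p=2$, since two lengths differing by $2$ can both be even. For comparison, the paper applies Theorem~\ref{thm:SLP-n=p=2} to $M_{(a_1,a_3)}$ and $M_{(a_2,a_3)}$ to force $(a_1,a_2)=(1,2)$, and then obtains contradictory parities for $a_3$.
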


\begin{proof}
    Once we prove the desired characterization of SLP, the last statement is explained by the fact that conditions (1) and (2) fail in characteristic $2$: we have $a_1+\cdots+a_n\geq a_1+\cdots+a_{n-1}\geq n-1\geq 2>d_2(a_n+1)$.

    Suppose that (1) or (2) holds, and in particular $p\geq 3$. Since $d_p(a_{n}+1)\leq \lfloor p/2\rfloor\leq p-2$, it follows that $a_1+\cdots+a_{n-1}<p$. By induction on $n$ (and using Theorem~\ref{thm:SLP-n=2-p>2} when $n=3$) it follows that SLP holds for $(a_1,\cdots,a_{n-1})$. Using \eqref{eq:SLP-forMa}, every summand $\delta_{c'}(-j')$ of $M_{(a_1,\cdots,a_{n-1})}$ satisfies
    \[c'-1\leq c'+2j'-1=a_1+\cdots+a_{n-1}\leq p-2,\]
    where the last inequality follows from $a_n\geq 1$ in case (1), and from $d_p(a_n+1)\leq p-2$ in case (2). It follows that $c'<p$, and therefore $M_{(c'-1,a_n)}$ satisfies SLP by Theorem~\ref{thm:SLP-n=2-p>2}. Corollary~\ref{cor:inductive-SLP} implies SLP for~$M_{\ul{a}}$.

    Assume now that SLP holds for $M_{\ul{a}}$, and apply Theorem~\ref{thm:tensor-SLP} to conclude that SLP holds for $M_{(a_{i_1},\cdots,a_{i_k})}$ for any  $1\leq i_1<\cdots<i_k\leq n$. If $p=2$, Theorem~\ref{thm:SLP-n=p=2} applied to $M_{(a_1,a_3)}$ and $M_{(a_2,a_3)}$ yields $a_1,a_2\in\{1,2\}$. Since $M_{(1,1)}$ and $M_{(2,2)}$ fail SLP, we only have to consider the case $(a_1,a_2)=(1,2)$. SLP for $M_{(a_1,a_3)}$ implies then that $a_3$ is even, and SLP for $M_{(a_2,a_3)}$ implies that $a_3$ is odd, a contradiction. 
    
    It remains to analyze the case when $p\geq 3$. If $a_{n-1}<p$ then it follows by induction when $n>3$, and by \eqref{eq:Ma12-small-SLP} when $n=3$, that $a_1+\cdots+a_{n-1}<p$. This implies that $\delta_c$ is a summand of $M_{(a_1,\cdots,a_{n-1})}$, where $c=a_1+\cdots+a_{n-1}+1$, while Corollary~\ref{cor:inductive-SLP} implies that $M_{(c-1,a_n)}$ has SLP. If $a_n<p$ then \eqref{eq:Ma12-small-SLP} implies $c-1+a_n<p$ and condition (1) holds. If $a_n\geq p$ then Theorem~\ref{thm:SLP-n=2-p>2} implies $d_p(a_n+1)\geq c-1$ and condition (2) holds. We are then left with the case $a_{n-1}\geq p$, and since $M_{(a_{n-2},a_{n-1},a_n)}$ has SLP, we may assume further that $n=3$. Since $M_{(a_2,a_3)}$ has SLP and $p\leq a_2\leq a_3$, it follows from \eqref{eq:dela-delb-char0} with $a=1+a_2$, $b=1+a_3$ that one of the summands of $M_{(a_2,a_3)}$ is of the form $\delta_c(-j)$ with $p|c$. Therefore $M_{(a_1,c-1)}$ fails SLP by Corollary~\ref{cor:failSLP-pdivis}, and so does $M_{\ul{a}}$ by Corollary~\ref{cor:inductive-SLP}.
\end{proof}

\section*{Acknowledgements}
Experiments with Macaulay2 \cite{GS} and discussions with ChatGPT \cite{ChatGPT} have provided many valuable insights. Marangone gratefully acknowledges that this research was supported in part by the Pacific Institute for the Mathematical Sciences. Marangone acknowledges the European Union’s Horizon Europe research and innovation program under the Marie Skłodowska-Curie Funding Program (Project PrIMes, Grant Agreement No. 101277460). Raicu and Reed acknowledge the support of the National Science Foundation Grant DMS-2302341. Reed also acknowledges support by by the National Natural Science Foundation of China (No. 12288201). Part of the material in this paper is based upon work supported by the National Science Foundation under Grant No. DMS-1928930 and by the Alfred P. Sloan Foundation under grant G-2021-16778, while Raicu and Reed were in residence at the Simons Laufer Mathematical Sciences Institute (formerly MSRI) in Berkeley, California, during the Spring 2024 semester.

\end{document}